\tikzset{ 
    tree_edge/.style={
        thick, -
    },
    edge from parent/.style={
        draw, thick, -
    } 
}
\tikzstyle{level 1}=[level distance=12.5mm,sibling distance=3.35cm] 
\tikzstyle{level 2}=[level distance=12.5mm,sibling distance=2cm]
\tikzstyle{level 3}=[level distance=12.5mm,sibling distance=1cm]
\theoremstyle{plain}
\newtheorem{theorem}{Theorem}[section]
\newtheorem{lemma}[theorem]{Lemma}
\newtheorem{proposition}[theorem]{Proposition}
\theoremstyle{definition}
\newtheorem{definition}[theorem]{Definition}
\newtheorem{example}[theorem]{Example}
\theoremstyle{remark}
\newtheorem{remark}[theorem]{Remark}
\title{A Semi-Lagrangian Adaptive Rank (SLAR) Method for High-Dimensional Vlasov Dynamics}
\author{
  Nanyi Zheng\thanks{Department of Mathematical Sciences, University of Delaware, Newark, DE, USA.
  \href{mailto:nyzheng@udel.edu}{nyzheng@udel.edu}}
  \and
  William A. Sands\thanks{Department of Mathematical Sciences, University of Delaware, Newark, DE, USA.
  \href{mailto:wsands@udel.edu}{wsands@udel.edu} (corresponding author)}
  \and
  Daniel Hayes\thanks{Department of Mathematical Sciences, University of Delaware, Newark, DE, USA.
  \href{mailto:dphayes@udel.edu}{dphayes@udel.edu}}
  \and
  Andrew J. Christlieb\thanks{Department of Computational Mathematics, Science and Engineering,
  Michigan State University, East Lansing, MI, USA.
  \href{mailto:christli@msu.edu}{christli@msu.edu}}
  \and
  Jing-Mei Qiu\thanks{Department of Mathematical Sciences, University of Delaware, Newark, DE, USA.
  \href{mailto:jingqiu@udel.edu}{jingqiu@udel.edu}}
}
\date{}
\begin{document}

\maketitle

\bigskip
\noindent
{\bf Abstract.} 
We extend our previous work on a semi-Lagrangian adaptive rank (SLAR) integrator, in the finite difference framework for nonlinear Vlasov-Poisson systems, to the general high-order tensor setting. The proposed scheme retains the high-order accuracy of semi-Lagrangian methods, ensuring stability for large time steps and avoiding dimensional splitting errors.
The primary contribution of this paper is the novel extension of the algorithm from the matrix to the high-dimensional tensor setting, which enables the simulation of Vlasov models in up to six dimensions. The key technical components include (1) a third-order high-dimensional polynomial reconstruction that scales as $\mathcal{O}(d^2)$, providing a point-wise approximation of the solution at the foot of characteristics in a semi-Lagrangian scheme; (2) a recursive hierarchical adaptive cross approximation of high-order tensors in a hierarchical Tucker format, characterized by a tensor tree; (3) a low-complexity Poisson solver in the hierarchical Tucker format that leverages the FFT for efficiency. The computed adaptive rank kinetic solutions exhibit low-rank structures within branches of the tensor tree resulting in substantial computational savings in both storage and time. The resulting algorithm achieves a computational complexity of $\mathcal{O}(d^4 N r^{3+\lceil\log_2d\rceil})$, where $N$ is the number of grid points per dimension, $d$ is the problem dimension, and $r$ is the maximum rank in the tensor tree, overcoming the curse of dimensionality. Through extensive numerical tests, we demonstrate the efficiency of the proposed algorithm and highlight its ability to capture complex solution structures while maintaining a computational complexity that scales linearly with $N$.

{\bf Keywords}: Adaptive rank, cross approximation,  hierarchical Tucker, semi-Lagrangian, Vlasov-Poisson system, curse of dimensionality

\section{Introduction}
\label{sec:Intro}

In this work, we develop algorithms for the nonlinear Vlasov-Poisson (VP) system, which is widely used to study various plasma phenomena including instabilities, wave-particle interactions, and charged particle dynamics in fusion devices and accelerators, as well as large-scale astrophysical events. In the present setting, we are primarily interested in the time evolution of an electron probability distribution function $f(\bm{x}, \bm{v}, t)$ whose evaluation represents the probability of finding an electron at position $\bm{x} \in \mathbb{R}^{d_x}$, with velocity $\bm{v} \in \mathbb{R}^{d_v}$, at time $t$. We shall denote the phase space over which the distribution evolves by $\Omega_{\bm{x}} \times \Omega_{\bm{v}} \subset \mathbb{R}^{d_x + d_v}$. In practical applications of plasmas, this phase space is high-dimensional and generally requires either four or six dimensions, corresponding to 2D2V or 3D3V problems, respectively. The exponential storage and computational cost associated with such high-dimensional problems ---commonly known as the \emph{curse of dimensionality}--- poses a significant challenge in VP simulations. The primary objective of this work is to construct a structured numerical framework that remains \emph{scalable} in high dimensions, meaning that the algorithmic building blocks can be systematically extended to arbitrary values of $d_x$ and $d_v$, while controlling both memory usage and preserving high-order accuracy.

To present the model, we shall assume that the heavier ions remain stationary and provide a uniform neutralizing background charge density $\rho_{0}$. After non-dimensionalizing time by the inverse electron plasma frequency $\omega_{pe}^{-1}$ and space by the plasma Debye length $\lambda_{D}$, the dimensionless VP system is given by
\begin{align}
    &\frac{\partial f}{\partial t} + \bm{v} \cdot \nabla_{\bm{x}} f - \bm{E}(\bm{x}, t) \cdot \nabla_{\bm{v}} f = 0, \quad \Omega_{\bm{x}} \times \Omega_{\bm{v}} \times (0,T], 
    \label{eq:Vlasov} \\
    &-\Delta_{\bm{x}} \phi = \rho(\bm{x}, t), \quad
    \rho(\bm{x}, t) = \rho_0 - \int_{\mathbb{R}^{d_v}} f(\bm{x}, \bm{v}, t) \,\mathrm{d}\bm{v},
    \quad \Omega_{\bm{x}} \times (0,T], 
    \label{eq:Poisson} \\
    &\bm{E}(\bm{x},t) = -\nabla_{\bm{x}} \phi, 
    \quad \Omega_{\bm{x}} \times (0,T]. 
    \label{eq:Efromphi}
\end{align}
The evolution of $f(\bm{x},\bm{v},t)$ by \eqref{eq:Vlasov} induces a corresponding charge density $\rho(\bm{x},t)$, which generates a self-consistent electrostatic potential $\phi(\bm{x},t)$ via \eqref{eq:Poisson}. Then, from \eqref{eq:Efromphi}, we obtain the electric field $\bm{E}(\bm{x},t)$, which accelerates electrons to restore quasi-neutrality.

A wide range of numerical methods have been developed for the VP system, and, more generally, multi-scale kinetic models. Of particular relevance to this work are the semi-Lagrangian (SL) schemes \cite{sonnendrucker1999semi, qiu2010conservative, rossmanith2011positivity, qiu2011conservative, DimarcoReview2014, cho2024conservative} and the family of low-rank and adaptive-rank approaches \cite{einkemmer2025review}. The SL methodology combines an Eulerian mesh with characteristic tracing, which results in an integration scheme that permits large time step sizes and overcomes the CFL restriction. At each time step, solution values on grid points are updated by interpolating around the feet of the characteristics. For linear advection, this yields a straightforward and effective algorithm, and when diffusion and source terms are present, these contributions can be integrated along characteristics \cite{li2023high, ding2020semi}. Low-rank and adaptive-rank techniques have recently emerged as powerful tools to reduce computational complexity and mitigate the curse of dimensionality in high-dimensional kinetic problems. Some notable examples include low-rank SL schemes \cite{kormann2015semi}, dynamical low rank approaches \cite{einkemmer2018low, einkemmer2021mass, dektor2024interpolatory}, Eulerian step-and-truncate formulations \cite{guo2022low,sands2025transport}, and sampling-based methods that select the most ``informative" rows and columns of matrices \cite{zheng2025semi, sands2025adaptive}. These sampling strategies have been applied to the Vlasov–Poisson system with local mass conservation \cite{zheng2025semi}, and, more recently, to the multi-scale BGK model \cite{sands2025adaptive}. In particular, \cite{sands2025adaptive} proposed an adaptive-rank SL method that preserves the local conservation laws for mass, momentum, and energy using an implicit, dynamic closure approach that extends the locally macroscopic conservative (LoMaC) projection technique in \cite{guo2024local}.

The central component of this paper is the integration of the hierarchical Tucker decomposition (HTD), originally developed for tensor compression \cite{hackbusch2009new}, into the SL finite-difference (FD) framework for the nonlinear evolution of the VP system. Existing algorithms for constructing HTDs include both classical approaches that require global access to tensor entries \cite{grasedyck2010hierarchical,kressner2012htucker} and heuristic, data-centric strategies such as the black box approximations based on cross approximations \cite{espig2009black,ballani2013black}. In contrast to the black box approximations that typically involve pseudoinverses or least-squares solves, the present work introduces a hierarchical Tucker adaptive cross approximation (HTACA), a recursive algorithm that extends the adaptive cross approximation (ACA) from matrices to tensors. HTACA builds HTDs from localized entry access 
{with successive rank-1 corrections,} and provides the methodological foundation for the scalable framework developed for a general high dimensional VP system.  

The contributions of this paper are threefold. First, we propose a SL-FD method based on a unified high-dimensional quadratic reconstruction at the foot of characteristics. This reconstruction achieves third-order accuracy with a compact $\mathcal{O}(d^{2})$ stencil and leads to a dimension-agnostic SL-FD scheme that avoids dimensional splitting. Although the method does not enforce conservation, it offers a simple and systematic formulation that differs from existing SL approaches. Second, we introduce HTACA as a new tensor algorithm that generalizes ACA to high-order tensors and constructs HTDs from local entry access. This algorithm is memory-efficient and scalable, and serves as the core building block of our adaptive-rank framework. Third, we develop a low-complexity Poisson solver in the HTD format, which leverages HTACA to represent the solution in low-rank form and employs efficient batched FFT/IFFT transforms. A notable advantage of this solver is that it circumvents Hadamard products of HT tensors, which typically increase ranks \cite{kressner2012htucker}. Combining these components, the resulting SLAR framework admits a worst-case complexity of $\mathcal{O}\!\bigl(d^{4} N r^{3+\lceil \log_{2} d \rceil}\bigr)$, where $N$ is the grid resolution per dimension, $d$ is the problem dimension, and $r$ is the maximum tree rank.

The remainder of this paper is organized as follows. 
In \cref{sec:SL-FD-high-D}, we present a local third-order SL-FD scheme for high-dimensional linear advection problems. 
\Cref{sec:HTACA} describes the HTACA alogorithm constructed via a recursive adaptive cross approximation (ACA) on a given dimension tree. In \cref{sec:SLAR}, we detail the proposed high-dimensional SLAR method for the nonlinear VP system, including a nonlinear characteristic tracing, a Poisson solver in HTD form, and an analysis of the overall algorithmic complexity. The effectiveness of the method is demonstrated on several benchmark problems in \cref{sec:numerical_tests}. Finally, \cref{sec:Conclusion} summarizes the findings and outlines potential directions for future work. 

\section{SL-FD scheme for linear advection equations}
\label{sec:SL-FD-high-D}

In this section, we extend the SL-FD solver previously developed for 2D problems in \cite{zheng2025semi} to problems posed in arbitrary dimensions using characteristic tracing together with high-order polynomial reconstruction. The proposed solver is designed for general high-dimensional problems and relies on unified reconstruction formulas that can be applied to arbitrary $d$-dimensional Cartesian grids without requiring dimension-specific adjustments. This structured formulation enables systematic implementation of the SL-FD scheme in high-dimensional problems and facilitates its integration into the adaptive-rank framework described in \cref{sec:SLAR}.

\subsection{An SL method for the linear advection equation}

We consider the linear advection equation in $d$ spatial dimensions:
\begin{equation}\label{eq:advection_ddim}
    \frac{\mathrm{d}f(\bm{x},t)}{\mathrm{d}t} + \bm{a}(\bm{x},t)\cdot\nabla_{\bm{x}}f(\bm{x},t) = 0,
\end{equation}
where $\bm{x} = (x^{(1)}, x^{(2)}, \dots, x^{(d)}) \in \mathbb{R}^d$ and $\bm{a}(\bm{x},t)$ is the velocity field. An important property is that the solution is constant along the characteristic curves, a fact that can be exploited to construct a high-order local solver for arbitrary $d$-dimensional problems.

The domain is discretized along each dimension with a uniform Cartesian mesh,
\begin{equation*}
    a^{(\mu)} = x^{(\mu)}_{\frac{1}{2}} < x^{(\mu)}_{\frac{3}{2}} < \ldots < x^{(\mu)}_{N_{\mu}+\frac{1}{2}} = b^{(\mu)}, 
    \quad \mu=1,2,\ldots,d.
\end{equation*}
For each $\mu$, with $\mu = 1, \dots, d$, we denote the cells, centers, and spacing as
\begin{equation*}
    I^{(\mu)}_{i_{\mu}} = \left[x^{(\mu)}_{i_{\mu}-\frac{1}{2}},x^{(\mu)}_{i_{\mu}+\frac{1}{2}}\right], 
    \quad x^{(\mu)}_{i_{\mu}} = \tfrac{1}{2}\bigl(x^{(\mu)}_{i_{\mu}-\frac{1}{2}}+x^{(\mu)}_{i_{\mu}+\frac{1}{2}}\bigr), 
    \quad \Delta x^{(\mu)} = x^{(\mu)}_{i_{\mu}+\frac{1}{2}}-x^{(\mu)}_{i_{\mu}-\frac{1}{2}}.
\end{equation*}
For a particular cell, we use multi-index notation: 
$
I_{i_1,i_2,\ldots,i_d} = I^1_{i_1}\times I^2_{i_2} \times \cdots\times I^{d}_{i_d}, 
\quad \bm{x}_{i_1,\ldots,i_d}=(x^{(1)}_{i_1},\ldots,x^{(d)}_{i_d}).
$
The solution to \eqref{eq:advection_ddim} can be computed pointwise by the method of characteristics:
\begin{equation}
    f(\bm{x}_{i_1,\ldots,i_d},t^{n+1}) = f(\bm{x}^\star, t^n),
    \label{eq:SL_basic_formulation}
\end{equation}
where $\bm{x}^\star = (x^{(1),\star},\ldots,x^{(d),\star})$ is the characteristic foot, obtained by integrating the ODE system backward in time from $t^{n+1}$ to $t^{n}$:
\begin{equation}\label{eq:ODEs_characteristics}
    \dfrac{d\bm{x}}{dt} = \bm{a}(\bm{x}(t), t), \qquad    \bm{x}(t^{n+1}) = \bm{x}_{i_1,\ldots,i_d}.
\end{equation}
This ODE system can be solved using a high-order Runge--Kutta (RK) method. We remark that the locations of the characteristic feet generally do not coincide with the mesh. To evaluate 
$f(\bm{x}^\star,t^n)$, one can use various interpolation or reconstruction strategies. In this work, we employ a high-dimensional polynomial reconstruction technique, as described in the next subsection.

\subsection{High-dimensional local polynomial reconstruction}
\label{sec:poly_rec}

In \eqref{eq:SL_basic_formulation}, we reconstruct the solution at the characteristic foot using a high-dimensional polynomial. The scheme achieves third-order spatial accuracy by constructing a local $P^2$ polynomial in $d$ variables using the cell-center values of $ f(\bm{x}, t^n) $ near the characteristic foot $ \bm{x}^\star $:
\begin{equation}\label{eq:SL-FD_solver}
    f^{n+1}_{i_1,\ldots,i_d} = p(\bm{x}^\star).
\end{equation}
Suppose the location of the characteristic foot satisfies $\bm{x}^\star \in I_{i_1^\star,i_2^\star,\ldots,i_d^\star}$. We reconstruct a $d$-dimensional quadratic polynomial of the form
\begin{equation}\label{eq:P2_ddim}
\begin{split}
    p(x^{(1)},\ldots,x^{(d)}) = a_0 &+ \sum_{\mu=1}^d a_{x^{(\mu)}} \, \xi^{(\mu)}(x^{(\mu)}) + \sum_{\mu=1}^d a_{x^{(\mu)} x^{(\mu)}} \, \bigl( \xi^{(\mu)}(x^{(\mu)}) \bigr)^2 \\
    &+ \sum_{1 \leq \mu < \nu \leq d} a_{x^{(\mu)}x^{(\nu)}} \, \xi^{(\mu)}(x^{(\mu)}) \, \xi^{(\nu)}(x^{(\nu)}),
\end{split}
\end{equation}
where the normalized local coordinate in direction $\mu$ is
\(
    \xi^{(\mu)}(x^{(\mu)}) = \frac{x^{(\mu)} - x^{(\mu)}_{i^\star_\mu}}{\Delta x^{(\mu)}}.
\)
This polynomial contains $1$ constant term, $d$ linear terms, $d$ pure quadratic terms, and $\binom{d}{2}$ mixed quadratic terms, giving $1 + 2d + \binom{d}{2}$ coefficients in total.

To determine the coefficients, we impose two sets of constraints. The first set enforces interpolation at the central cell \(\bm{x}_{i^\star_1,\ldots,i^\star_d}\) and its \(2d\) direct neighbors:
\begin{equation*}
    p(\bm{x}_{j_1,\ldots,j_d}) = f^n_{j_1,\ldots,j_d}, \quad \text{for } \bm{x}_{j_1,\ldots,j_d} \in \mathcal{S}_{\mathrm{inter}},
\end{equation*}
where the interpolation stencil is
\begin{equation*}
\mathcal{S}_{\mathrm{inter}}
= \Bigl\{ \bm{x}_{j_1,\ldots,j_d} :
\exists!\,\mu \ \text{s.t. } j_\mu=i^\star_\mu\pm 1, \ 
j_\nu=i^\star_\nu \ (\nu\neq\mu) \Bigr\}
\cup \{\bm{x}_{i^\star_1,\ldots,i^\star_d}\}.
\end{equation*}
This yields a linear system of size \(1 + 2d\), from which the constant, linear, and pure quadratic coefficients are obtained for all \( \mu = 1,\ldots,d \):
\begin{align*}
    a_0 &= f^n_{i^\star_1,\ldots,i^\star_d}, \quad a_{x^{(\mu)}} = \frac{1}{2} \left( f^n_{i^\star_1,\ldots,i^\star_\mu+1,\ldots,i^\star_d} - f^n_{i^\star_1,\ldots,i^\star_\mu-1,\ldots,i^\star_d} \right), \\
    a_{x^{(\mu)}x^{(\mu)}} &= \frac{1}{2} \left( f^n_{i^\star_1,\ldots,i^\star_\mu+1,\ldots,i^\star_d} + f^n_{i^\star_1,\ldots,i^\star_\mu-1,\ldots,i^\star_d} \right) - f^n_{i^\star_1,\ldots,i^\star_d}.
\end{align*}

With these lower-order coefficients fixed, the mixed quadratic terms \( a_{x^{(\mu)}x^{(\nu)}} \) are determined by a least squares condition over the first-indirect neighbor stencil:
\begin{equation*}
    \mathcal{S}_{\mathrm{least}} = \bigl\{ \bm{x}_{j_1,\ldots,j_d} : j_\mu = i^\star_\mu \pm 1,\ j_\nu = i^\star_\nu \pm 1,\ \mu < \nu,\ j_k = i_k^\star\ \text{for } k \ne \mu,\nu \bigr\}.
\end{equation*}
Here, we minimize
\vspace{-0.3cm}
\begin{equation*}
    \min_{\{a_{x^{(\mu)}x^{(\nu)}}\}} \sum_{\bm{x}_{\bm{j}} \in \mathcal{S}_{\mathrm{least}}} \left( p(\bm{x}_{\bm{j}}) - f^n_{\bm{j}} \right)^2,
\end{equation*}
\vspace{-0.1cm}
which admits a unique solution that provides the remaining mixed quadratic terms in concise closed form:
\begin{equation*}
    a_{x^{(\mu)}x^{(\nu)}} = \frac{1}{4} \big( f^n_{i^\star_\mu+1, i^\star_\nu+1} + f^n_{i^\star_\mu-1, i^\star_\nu-1} - f^n_{i^\star_\mu+1, i^\star_\nu-1} - f^n_{i^\star_\mu-1, i^\star_\nu+1} \big),
\end{equation*}
for all \( 1 \leq \mu < \nu \leq d \), with all omitted indices fixed to \( i^\star_k \) for \( k \ne \mu,\nu \).

The resulting reconstruction, when coupled to a third-order RK scheme for characteristic tracing, achieves third-order spatio-temporal accuracy, as stated in the following proposition.
\begin{proposition}\label{prop:P2_analysis}
If all $\{f^n_{j_1,\ldots,j_d}\}$ in the reconstruction stencil $\mathcal{S}_{\mathrm{inter}} \cup \mathcal{S}_{\mathrm{least}}$ are exact, and the time step satisfies $\Delta t = \mathcal{O}(h)$ with $h = \max_k \Delta x^{(k)}$, then the SL-FD solver \eqref{eq:SL-FD_solver} with a third-order RK scheme for characteristic tracing, satisfies the one-step error estimate
\begin{equation*}
    f^{n+1}_{i_1,i_2,\ldots,i_d}-f(\bm{x}_{i_1,i_2,\ldots,i_d},t^{n+1})=\mathcal{O}(h^3).
\end{equation*}
\end{proposition}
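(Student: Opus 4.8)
\emph{Proof sketch (plan).} The idea is to split the one-step error into a characteristic-tracing contribution and a reconstruction contribution and to bound each. Let $\bm{x}^\star$ denote the exact foot of the characteristic through $\bm{x}_{i_1,\ldots,i_d}$, i.e.\ the exact solution of \eqref{eq:ODEs_characteristics}, and let $\widehat{\bm{x}}^\star$ be the point produced by the third-order RK scheme. Since the exact solution of \eqref{eq:advection_ddim} is constant along characteristics, $f(\bm{x}_{i_1,\ldots,i_d},t^{n+1}) = f(\bm{x}^\star,t^n)$, and since the scheme returns $f^{n+1}_{i_1,\ldots,i_d} = p(\widehat{\bm{x}}^\star)$, we may write
\begin{equation*}
    f^{n+1}_{i_1,\ldots,i_d} - f(\bm{x}_{i_1,\ldots,i_d},t^{n+1})
    = \underbrace{\bigl(p(\widehat{\bm{x}}^\star) - f(\widehat{\bm{x}}^\star,t^n)\bigr)}_{E_{\mathrm{rec}}}
    + \underbrace{\bigl(f(\widehat{\bm{x}}^\star,t^n) - f(\bm{x}^\star,t^n)\bigr)}_{E_{\mathrm{char}}} .
\end{equation*}
For $E_{\mathrm{char}}$ I would use standard RK error analysis: assuming $\bm{a}$ and $f(\cdot,t^n)$ are sufficiently smooth, a third-order RK step for \eqref{eq:ODEs_characteristics} over a time interval of length $\Delta t = \mathcal{O}(h)$ has local error $|\widehat{\bm{x}}^\star - \bm{x}^\star| = \mathcal{O}(\Delta t^4) = \mathcal{O}(h^4)$, so $|E_{\mathrm{char}}| \le \|\nabla_{\bm{x}} f(\cdot,t^n)\|_\infty\,|\widehat{\bm{x}}^\star - \bm{x}^\star| = \mathcal{O}(h^4)$.

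The substantive part is $E_{\mathrm{rec}} = \mathcal{O}(h^3)$. I would compare the reconstructed polynomial $p$ with $q$, the degree-two Taylor polynomial of $f(\cdot,t^n)$ about the foot-cell center $\bm{x}_{i^\star_1,\ldots,i^\star_d}$ written in the same local coordinates $\xi^{(\mu)}$; its coefficients are $q_0 = f(\bm{x}_{i^\star},t^n)$, $q_{x^{(\mu)}} = \Delta x^{(\mu)}\partial_{x^{(\mu)}}f$, $q_{x^{(\mu)}x^{(\mu)}} = \tfrac{1}{2}(\Delta x^{(\mu)})^2\partial^2_{x^{(\mu)}}f$, and $q_{x^{(\mu)}x^{(\nu)}} = \Delta x^{(\mu)}\Delta x^{(\nu)}\partial^2_{x^{(\mu)}x^{(\nu)}}f$, all derivatives evaluated at $\bm{x}_{i^\star}$. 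Inserting the exact data $f^n_{\bm{j}} = f(\bm{x}_{\bm{j}},t^n)$ into the closed-form coefficient expressions and Taylor expanding about $\bm{x}_{i^\star}$, one finds $a_0 = q_0$ exactly, the centered first difference gives $a_{x^{(\mu)}} = q_{x^{(\mu)}} + \mathcal{O}(h^3)$, the centered second difference gives $a_{x^{(\mu)}x^{(\mu)}} = q_{x^{(\mu)}x^{(\mu)}} + \mathcal{O}(h^4)$, and --- after noting that the least-squares step on $\mathcal{S}_{\mathrm{least}}$ reduces, by the orthogonality of the $\pm$ sign patterns over the four points, to exactly the centered mixed difference stated in the text --- $a_{x^{(\mu)}x^{(\nu)}} = q_{x^{(\mu)}x^{(\nu)}} + \mathcal{O}(h^4)$. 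Since $\bm{x}^\star \in I_{i^\star_1,\ldots,i^\star_d}$ and $|\widehat{\bm{x}}^\star - \bm{x}^\star| = \mathcal{O}(h^4)$, the point $\widehat{\bm{x}}^\star$ lies within $\mathcal{O}(h)$ of $\bm{x}_{i^\star}$, so $|\xi^{(\mu)}(\widehat{\bm{x}}^\star)| \le C$; multiplying the coefficient discrepancies by these bounded monomials gives $p(\widehat{\bm{x}}^\star) - q(\widehat{\bm{x}}^\star) = \mathcal{O}(h^3)$, while the third-order Taylor remainder of $f(\cdot,t^n)$ yields $q(\widehat{\bm{x}}^\star) - f(\widehat{\bm{x}}^\star,t^n) = \mathcal{O}(h^3)$; together $E_{\mathrm{rec}} = \mathcal{O}(h^3)$. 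Adding the two contributions gives $f^{n+1}_{i_1,\ldots,i_d} - f(\bm{x}_{i_1,\ldots,i_d},t^{n+1}) = \mathcal{O}(h^3) + \mathcal{O}(h^4) = \mathcal{O}(h^3)$.

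The main obstacle is the reconstruction estimate, and in particular keeping careful track of the accuracy of \emph{each} coefficient: the hybrid interpolation/least-squares construction could in principle have a coefficient error as large as $\mathcal{O}(h^2)$, which would drop the scheme to second order, so one must verify that the centered first difference defining $a_{x^{(\mu)}}$ is genuinely $\mathcal{O}(h^3)$-accurate and that the least-squares closure for the cross terms does not introduce a larger error. The symmetry of the stencils $\mathcal{S}_{\mathrm{inter}}$ and $\mathcal{S}_{\mathrm{least}}$ is exactly what makes this work, and it is the point that most needs to be checked rather than asserted; the characteristic-tracing bound, by contrast, is a routine application of the RK order conditions once $\Delta t = \mathcal{O}(h)$ is assumed.
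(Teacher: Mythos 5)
Your proposal is correct and follows essentially the same route as the paper's proof: an $\mathcal{O}(h^4)$ bound on the RK foot error combined with an $\mathcal{O}(h^3)$ bound on the reconstruction error, obtained by viewing $p$ as a perturbed degree-two Taylor polynomial of $f(\cdot,t^n)$ about the cell center. The only difference is that you verify the coefficient accuracies ($a_0$ exact, linear coefficients $\mathcal{O}(h^3)$, quadratic and mixed coefficients $\mathcal{O}(h^4)$) explicitly, whereas the paper asserts this in a single sentence, and you split the error at $f(\widehat{\bm{x}}^\star,t^n)$ rather than at $p(\bm{x}^\star)$ --- an immaterial distinction.
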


\begin{proof}
    First assume that the characteristic foot $\bm{x}^\star = (x^{(1),\star},\ldots,x^{(d),\star})$ is exact. Then, from \eqref{eq:SL_basic_formulation} and \eqref{eq:SL-FD_solver}, the local truncation error is
    \begin{equation*}
        f^{n+1}_{i_1,i_2,\ldots,i_d} - f(\bm{x}_{i_1,i_2,\ldots,i_d},t^{n+1}) = p(\bm{x}^\star) - f(\bm{x}^\star,t^n).
    \end{equation*}
    The polynomial \( p \) is of degree two, constructed from a symmetric stencil centered at \( \bm{x}_{i_1^\star,\ldots,i_d^\star} \), where \( \bm{x}^\star \in I_{i_1^\star,\ldots,i_d^\star} \). Moreover, \( p \) serves as a degree-two Taylor approximation of \( f(\cdot, t^n) \) centered at \( \bm{x}_{i_1^\star,\ldots,i_d^\star} \), with all coefficients obtained from centered finite differences that approximate the first and second derivatives of \(f\) with second-order accuracy, ensuring that
    \begin{equation*}
        p(\bm{x}^\star) - f(\bm{x}^\star,t^n) = \mathcal{O}(h^3).
    \end{equation*}

    Now suppose \( \bm{x}^\star \) is computed with a third-order RK scheme. Then, the characteristic foot \( \hat{\bm{x}}^\star \) satisfies
    \(
        \|\hat{\bm{x}}^\star - \bm{x}^\star\|_{\max} = \mathcal{O}(h^4),
    \)
    and, by Taylor expansion of \( p \) around \( \bm{x}^\star \) (with bounded derivatives), we have
    \(
        p(\hat{\bm{x}}^\star) - p(\bm{x}^\star) = \mathcal{O}(h^4).
    \)
    Combining these estimates gives
    \[
        f^{n+1}_{i_1,\ldots,i_d} - f(\bm{x}_{i_1,\ldots,i_d}, t^{n+1}) 
        = \mathcal{O}(h^3).
    \]
\end{proof}

We next discuss the computational complexity of the local solver \eqref{eq:SL-FD_solver}. It incurs three main costs per grid point: characteristic tracing, stencil extraction, and local polynomial reconstruction. Among these, the latter dominates the algebraic complexity. The quadratic polynomial contains \(1 + 2d + \binom{d}{2} = \mathcal{O}(d^2)\) coefficients, computed from \(1 + 2d + 4\binom{d}{2} = \mathcal{O}(d^2)\) stencil values using centered finite difference formulas. For instance, in the 2D2V case (\(d=4\)), this involves 15 coefficients and 33 stencil points, while in the 3D3V case (\(d=6\)) it involves 28 coefficients and 73 stencil points.

Despite its locality and third-order accuracy, applying the SL-FD solver to the full phase-space grid is infeasible in high dimensions due to the exponential growth in the number of grid points. To address this, we introduce the HTACA -- a novel and effective data extraction and compression scheme in \cref{sec:HTACA}. By evaluating only selected entries using the local SL-FD solver, HTACA constructs a highly compressed tensor approximation to the underlying high order tensor. The unification of these ideas forms the foundation of the proposed high-dimensional SLAR framework discussed in \cref{sec:SLAR}.

\section{Hierarchical Tucker Adaptive Cross Approximation (HTACA)}
\label{sec:HTACA}

In this section, we introduce the HTACA, which constructs HTDs of high-dimensional tensors through a novel extension of the ACA for matrices. First, in \cref{sec:Preliminaries}, we establish the basic tensor notation system and review the fundamental concepts of the HTD. The notation system adopted in \cref{sec:Preliminaries} closely follows the conventions in \cite{grasedyck2010hierarchical,kressner2012htucker}. Then, in \cref{sec:Construction}, we present a detailed description of the proposed HTACA algorithm. Together, these form the basis for the HTACA construction.

\subsection{{Notations and} Preliminaries}\label{sec:Preliminaries}

An order $d$ complex-valued tensor $\mathcal{X} \in \mathbb{C}^{N_1\times\cdots\times N_d}$ is a $d$-dimensional array whose elements are complex numbers. The tensor $\mathcal{X}$ has $d$ modes, indexed by $\mu \in {1,\ldots,d}$, and we denote the size of each mode as $N_{\mu}$. The entries of the tensor along mode $\mu$ are associated with an index set $\mathbb{I}_\mu = \{1,\ldots,N_\mu\}$. In addition, an index set can be associated with a subset of modes. For example, if $\alpha \subseteq \{1,\ldots,d\}$, then the corresponding $\alpha$-mode index set is
$
\mathbb{I}_\alpha = \left\{1,\ldots,\prod_{\mu\in \alpha}N_{\mu}\right\}.
$
Using these concepts, we introduce two fundamental operations essential to the construction of the HTD: \emph{matricization}, which rearranges a tensor into a matrix, and \emph{vectorization}, which is the special case of matricization that produces a vector.

The process of matricization first splits the modes of the tensor into a pair of disjoint subsets. Then, to construct the corresponding matricization, the first subset is merged to form row indices while the other forms the column indices. To illustrate, suppose that we partition the modes $\{1,2,\ldots,d\}$ into two disjoint sets, namely $\alpha=\{\alpha_1,\alpha_2,\ldots,\alpha_k\} \subseteq \{1,2,\ldots,d\}$ and $\alpha^c = \{s_1,s_2,\ldots,s_{d-k}\} = \{1,2,\ldots,d\} \setminus \alpha$. Then, the $\alpha$-matricization of $\mathcal{X}$ is the matrix
$
X^{\alpha}\in \mathbb{C}^{\left(\prod_{\mu\in\alpha}N_{\mu}\right)\times \left(\prod_{\nu\in\alpha^c}N_{\nu}\right)},
$
with entries defined by
$$
X^{\alpha}(i^\alpha,i^{\alpha^c}) = \mathcal{X}(i_1,i_2,\ldots,i_d),\quad \forall (i^\alpha, i^{\alpha^c})\in \mathbb{I}_\alpha\times\mathbb{I}_{\alpha^c},
$$
where the composite indices \(i^\alpha\) and \(i^{\alpha^c}\) are computed by the following bijective maps (with the modes in $\alpha$ and $\alpha^c$ ordered as listed):
\begin{align}
i^\alpha &= 1 + \sum_{\alpha_\ell\in \alpha}(i_{\alpha_\ell}-1)J_{\alpha_\ell}, \quad J_{\alpha_\ell} = \prod_{m=1}^{\ell-1}N_{\alpha_m}, \label{eq:index_1to1_map_1} \\
i^{\alpha^c} &= 1 + \sum_{s_\ell\in \alpha^c}(i_{s_\ell}-1)L_{s_\ell}, \quad L_{s_\ell} = \prod_{m=1}^{\ell-1}N_{s_m}, \label{eq:index_1to1_map_2}
\end{align}
where, by convention, an empty product (when $\ell=1$) is taken to be 1. Vectorization of a tensor $\mathcal{X}$ is obtained from the $\alpha$-matricization in which $\alpha = \{1,\ldots,d\}$, so
$
\text{vec}(\mathcal{X}) = X^{\{1,2,\ldots,d\}}\in\mathbb{C}^{N_1N_2\cdots N_d}.
$

We distinguish two types of indexing: the bold symbol \(\bm{i}^{\alpha}=(i_{\alpha_1},\ldots,i_{\alpha_k})\) 
denotes a multi-index over the modes in \(\alpha\), while the plain symbol \(i^{\alpha}\) 
denotes its corresponding linearized index in \(X^{\alpha}(i^{\alpha},i^{\alpha^c})\), satisfying
$\mathcal{X}(\bm{i}^\alpha,\bm{i}^{\alpha^c})=X^{\alpha}(i^\alpha,i^{\alpha^c}).$
These indices are related via the bijective mappings in \eqref{eq:index_1to1_map_1}–\eqref{eq:index_1to1_map_2}, and we interchange them when the context is clear. When the tensor is a matrix or vector, the two notations are equivalent and may be used interchangeably depending on context.

The Kronecker product plays a central role in the HTD by linking hierarchical levels of matricizations and vectorizations. For matrices $A=[\bm{a}_1~\bm{a}_2~\ldots~\bm{a}_{m_2}]\in\mathbb{C}^{m_1\times m_2}$ and $B=[\bm{b}_1~\bm{b}_2~\ldots~\bm{b}_{n_2}]\in\mathbb{C}^{n_1\times n_2}$, the Kronecker product $A\otimes B\in\mathbb{C}^{m_1n_1\times m_2n_2}$ is
\begin{equation*}
    A\otimes B =
    \begin{bmatrix}
    a_{11}B & a_{12}B & \cdots & a_{1m_2}B\\
    a_{21}B & a_{22}B & \cdots & a_{2m_2}B\\
    \vdots  & \vdots  & \ddots & \vdots   \\
    a_{m_1 1}B & a_{m_1 2}B & \cdots & a_{m_1 m_2}B
    \end{bmatrix}.
\end{equation*}
Equivalently, each column of \(A \otimes B\) can be expressed as the Kronecker product of a column of \(A\) with a column of \(B\):
\(
A\otimes B =
\left[
\bm{a}_1\otimes\bm{b}_1~\bm{a}_1\otimes\bm{b}_2~\cdots~\bm{a}_{m_2}\otimes\bm{b}_{n_2}
\right].
\)
The following proposition states this relation in the special case of vectors, which will be useful for connecting Kronecker products with vector outer products in the construction of HTACA.

\begin{proposition}\label{prop:kronecker_outer_relation}
Let $\bm{u} \in \mathbb{C}^{m}$ and $\bm{v} \in \mathbb{C}^{n}$. Then the outer product $\bm{v}\bm{u}^{\mathrm{T}}$ is a matrix in $\mathbb{C}^{n \times m}$, and its vectorization satisfies
\(
\mathrm{vec}(\bm{v} \bm{u}^{\mathrm{T}}) = \bm{u} \otimes \bm{v}.
\)
\end{proposition}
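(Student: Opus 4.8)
The plan is to verify the identity entrywise using the definitions of the outer product, the Kronecker product, and the vectorization (i.e., $\alpha$-matricization with $\alpha=\{1,2\}$) introduced above. Write $\bm{u}=(u_1,\ldots,u_m)^{\mathrm T}$ and $\bm{v}=(v_1,\ldots,v_n)^{\mathrm T}$. The outer product $M:=\bm{v}\bm{u}^{\mathrm T}\in\mathbb{C}^{n\times m}$ has entries $M_{ij}=v_i u_j$ for $i\in\{1,\ldots,n\}$, $j\in\{1,\ldots,m\}$, so $M$ is indeed $n\times m$, establishing the first claim. It remains to show that the two vectors $\mathrm{vec}(M)$ and $\bm{u}\otimes\bm{v}$ in $\mathbb{C}^{mn}$ agree in every component.

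First I would compute $\bm{u}\otimes\bm{v}$ directly from the block form of the Kronecker product: $\bm{u}\otimes\bm{v}$ stacks the scaled blocks $u_1\bm{v}, u_2\bm{v},\ldots,u_m\bm{v}$, so its entry in position $k$, written uniquely as $k=(j-1)n+i$ with $1\le i\le n$ and $1\le j\le m$, equals $u_j v_i$. Next I would compute $\mathrm{vec}(M)=M^{\{1,2\}}$ using the linearization map \eqref{eq:index_1to1_map_1}: with mode $1$ of size $n$ (the row mode, listed first) and mode $2$ of size $m$, the composite index for $(i,j)$ is $1+(i-1)\cdot 1+(j-1)\cdot n=(j-1)n+i=k$. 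Hence the $k$-th entry of $\mathrm{vec}(M)$ is $M_{ij}=v_i u_j$. Comparing, both vectors have $k$-th entry $u_j v_i$ for the same bijective correspondence $k\leftrightarrow(i,j)$, so $\mathrm{vec}(\bm{v}\bm{u}^{\mathrm T})=\bm{u}\otimes\bm{v}$.

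There is essentially no analytical obstacle here; the only thing requiring care is bookkeeping, namely making sure the ordering convention for vectorization in \eqref{eq:index_1to1_map_1} (row index varying fastest, consistent with column-major stacking) matches the block-stacking order in the definition of $A\otimes B$ given above. Since the paper's vectorization is exactly column-major and the Kronecker product is defined in the standard block form, the two conventions coincide, and the identity follows. I would close by remarking that this is the vector specialization of the general column-wise identity $A\otimes B=[\bm{a}_1\otimes\bm{b}_1\ \cdots]$ already noted in the text, which gives an alternative one-line derivation by taking $A=\bm{u}^{\mathrm T}$ (a $1\times m$ matrix with columns $u_j\in\mathbb{C}^1$) and $B=\bm{v}$.
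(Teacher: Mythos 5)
Your proof is correct: the entrywise verification is careful about the one point that actually matters, namely that the paper's linearization map \eqref{eq:index_1to1_map_1} gives column-major stacking ($k=(j-1)n+i$), which matches the block order of $\bm{u}\otimes\bm{v}$. The paper states this proposition without proof (treating it as a standard fact), so there is nothing to compare against; your argument, including the one-line alternative via the column-wise Kronecker identity, is exactly the standard justification.
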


The HTD constructs a hierarchical low-rank representation by recursively partitioning modes of a tensor. This is encoded in a binary structure called the \emph{dimension tree}, which defines the relationships between mode subsets. A formal definition for this can be stated as follows.

\begin{definition}
(Dimension tree). A binary tree $\mathcal{T}$, with each node represented by a subset of $\{1,2,\ldots,d\}$, is called a \textit{dimension tree} if it satisfies the following conditions: the root node is $\{1,2,\ldots,d\}$; each leaf node contains a single index; the two child nodes of every parent node are disjoint; and each parent node is the union of its two child nodes. Moreover, the set of leaf and non-leaf nodes are respectively denoted as $\mathcal{L}(\mathcal{T})$ and $\mathcal{N}(\mathcal{T}) = \mathcal{T} \setminus \mathcal{L}(\mathcal{T})$. For each non-leaf node $\alpha \in \mathcal{N}(\mathcal{T})$, we denote its left and right children by $\alpha_l$ and $\alpha_r$, respectively, and the level of a node is its distance from the root node. 
\end{definition}

To simplify notation, we assume that for each parent node, indices in the left child are smaller than those in the right. In general, a dimension tree for \(d\) modes contains exactly \(d-1\) non-leaf nodes \cite{grasedyck2010hierarchical_SIMAA}. \Cref{fig:dimension_trees} displays two dimension trees that can be used to represent 4D and 6D data, i.e., \(d=4\) and \(d=6\). The tree in \Cref{fig:balanced_tree_4d} is balanced and contains two levels, while the tree in \Cref{fig:unbalanced_tree_6d} is unbalanced and contains three levels.

\begin{figure}[t]
    \centering
    \subfigure[Balanced 4D tree]{
    \begin{tikzpicture}[scale=0.75]
        \node(1234) {$\left\{ 1,2,3,4 \right\}$}
            child{node(12) {$\left\{ 1, 2 \right\}$} 
                child{node(1) {$\left\{ 1 \right\}$}}
                child{node(2) {$\left\{ 2 \right\}$}}   
            }
            child{node(34) {$\left\{ 3, 4 \right\}$} 
                child{node(3) {$\left\{ 3 \right\}$}}
                child{node(4) {$\left\{ 4 \right\}$}}
            };
    \label{fig:balanced_tree_4d}
    \end{tikzpicture}
    }
    \subfigure[Unbalanced 6D tree]{
    \begin{tikzpicture}[scale=0.75]
        \node (123456) {$\{1,2,3,4,5,6\}$}
            child {node (1234) {$\{1,2,3,4\}$}
                child {node (12) {$\{1,2\}$}
                    child {node (1) {$\{1\}$}}
                    child {node (2) {$\{2\}$}}
                }
                child {node (34) {$\{3,4\}$}
                    child {node (3) {$\{3\}$}}
                    child {node (4) {$\{4\}$}}
                }
            }
            child {node (56) {$\{5,6\}$}
                child {node (5) {$\{5\}$}}
                child {node (6) {$\{6\}$}}
            };
    \label{fig:unbalanced_tree_6d}
    \end{tikzpicture}
    }
    \caption{Two candidate dimension trees for 4D (left) and 6D (right) tensors.}
    \label{fig:dimension_trees}
\end{figure}
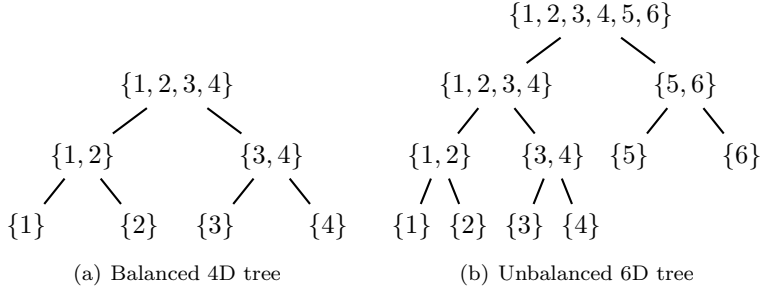

The notion of rank also extends to tensors in the HTD and is defined as follows.
\begin{definition}
(Hierarchical rank). Given a dimension tree \(\mathcal{T}\), the \textit{hierarchical rank} of a tensor \(\mathcal{X}\) is defined as
\[
(r_{\alpha})_{\alpha\in\mathcal{T}} = (\mathrm{rank}(X^{\alpha}))_{\alpha\in\mathcal{T}}.
\]
The set of tensors with hierarchical ranks bounded by \((r_\alpha)_{\alpha\in\mathcal{T}}\) is denoted by
\[
\mathcal{H}\text{-Tucker}((r_\alpha)_{\alpha\in\mathcal{T}}) = \left\{ \mathcal{X} \in \mathbb{C}^{N_1\times \cdots \times N_d} \,\middle|\, \mathrm{rank}(X^{\alpha}) \leq r_\alpha,\ \forall \alpha \in \mathcal{T} \right\}.
\]
\end{definition}
\noindent The hierarchical rank \((r_\alpha)_{\alpha\in\mathcal{T}}\) consists of \(2d-1\) values, with the rank at the root node fixed to 1. 

A dimension tree not only defines the hierarchical grouping of modes but also guides the construction of the basis and the so-called transfer matrices (tensors) in the HTD. This is motivated by the following observation: For each non-leaf node \(\alpha \in \mathcal{N}(\mathcal{T})\) with children \(\alpha_l\) and \(\alpha_r\), the column space of \(X^{\alpha}\) lies within that of the Kronecker product \(X^{\alpha_r} \otimes X^{\alpha_l}\). A formal statement of this property is given below.

\begin{lemma}\label{lem:HTD_foundation}
(\cite{grasedyck2010hierarchical,kressner2012htucker})  
For $\alpha = \alpha_l \cup \alpha_r \in \mathcal{T}$, the column space of $X^{\alpha}$ coincides with that of the Kronecker product of its children's matricizations:
\[
\mathrm{span}(X^{\alpha}) = \mathrm{span}(X^{\alpha_r} \otimes X^{\alpha_l}),
\]
where $\mathrm{span}(\cdot)$ denotes the column space.
\end{lemma}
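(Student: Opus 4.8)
The plan is to prove the equality via the two inclusions $\mathrm{span}(X^{\alpha})\subseteq\mathrm{span}(X^{\alpha_r}\otimes X^{\alpha_l})$ and $\mathrm{span}(X^{\alpha_r}\otimes X^{\alpha_l})\subseteq\mathrm{span}(X^{\alpha})$, handled separately. Two facts are used throughout: \Cref{prop:kronecker_outer_relation}, which turns the vectorization of a rank-one matrix $\bm{v}\bm{u}^{\mathrm{T}}$ into the Kronecker product $\bm{u}\otimes\bm{v}$; and the elementary observation that $\mathrm{span}(A\otimes B)$ contains (indeed is spanned by) every elementary Kronecker product $\bm{b}\otimes\bm{a}$ with $\bm{a}\in\mathrm{span}(A)$ and $\bm{b}\in\mathrm{span}(B)$. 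Write $\alpha^c=\{1,\dots,d\}\setminus\alpha$, so that $\alpha_l^c=\alpha_r\cup\alpha^c$ and $\alpha_r^c=\alpha_l\cup\alpha^c$, and for each multi-index $\bm{i}^{\alpha^c}$ introduce the slice matrix $S_{\bm{i}^{\alpha^c}}$ with $(\bm{i}^{\alpha_l},\bm{i}^{\alpha_r})$-entry $\mathcal{X}(\bm{i}^{\alpha_l},\bm{i}^{\alpha_r},\bm{i}^{\alpha^c})$. With the convention that left-child indices precede right-child indices (so that the mode ordering within $\alpha$ agrees with the row ordering of $X^{\alpha_r}\otimes X^{\alpha_l}$), \Cref{prop:kronecker_outer_relation} identifies $\mathrm{vec}(S_{\bm{i}^{\alpha^c}})$ with the column of $X^{\alpha}$ indexed by $\bm{i}^{\alpha^c}$; these columns span $\mathrm{span}(X^{\alpha})$, while every column of $X^{\alpha_l}$ appears as a column of some $S_{\bm{i}^{\alpha^c}}$ and every column of $X^{\alpha_r}$ appears as (the transpose of) a row of some $S_{\bm{i}^{\alpha^c}}$.

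\emph{Forward inclusion (routine).} Fix $\bm{i}^{\alpha^c}$ and set $S=S_{\bm{i}^{\alpha^c}}$. Its columns are columns of $X^{\alpha_l}$, so its column space lies in $\mathrm{span}(X^{\alpha_l})$; its rows are transposes of columns of $X^{\alpha_r}$, so its row space, transposed, lies in $\mathrm{span}(X^{\alpha_r})$. Taking a rank-revealing factorization $S=\sum_k\bm{p}_k\bm{q}_k^{\mathrm{T}}$ with $\bm{p}_k\in\mathrm{span}(X^{\alpha_l})$ and $\bm{q}_k\in\mathrm{span}(X^{\alpha_r})$, \Cref{prop:kronecker_outer_relation} yields $\mathrm{vec}(S)=\sum_k\bm{q}_k\otimes\bm{p}_k$, a finite sum of admissible elementary Kronecker products, hence an element of $\mathrm{span}(X^{\alpha_r}\otimes X^{\alpha_l})$. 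Letting $\bm{i}^{\alpha^c}$ range over all multi-indices gives the inclusion.

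\emph{Reverse inclusion (the main obstacle).} By the Kronecker-span fact it suffices to show that every elementary Kronecker product $\bm{c}^{\alpha_r}\otimes\bm{c}^{\alpha_l}$ --- with $\bm{c}^{\alpha_l}$ a column of $X^{\alpha_l}$ and $\bm{c}^{\alpha_r}$ a column of $X^{\alpha_r}$ --- lies in $\mathrm{span}(X^{\alpha})$. By \Cref{prop:kronecker_outer_relation} this vector is $\mathrm{vec}(\bm{c}^{\alpha_l}(\bm{c}^{\alpha_r})^{\mathrm{T}})$, so the task is to express the rank-one matrix $\bm{c}^{\alpha_l}(\bm{c}^{\alpha_r})^{\mathrm{T}}$ as a linear combination of the slices $\{S_{\bm{i}^{\alpha^c}}\}$. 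My plan would be to (i) realize $\bm{c}^{\alpha_l}$ and $\bm{c}^{\alpha_r}$ themselves as slices of $\mathcal{X}$, with the modes of $\alpha_r\cup\alpha^c$, respectively $\alpha_l\cup\alpha^c$, fixed; (ii) recurse down the subtrees rooted at $\alpha_l$ and $\alpha_r$, repeatedly replacing $\mathrm{span}(X^{\beta})$ by the span of the Kronecker product of $X^{\beta}$'s children's matricizations until the leaf level, where matricizations are full mode-$\mu$ slabs; and (iii) reassemble the resulting pieces into slices of $\mathcal{X}$. Step (iii) is the crux: a generic outer product $\bm{c}^{\alpha_l}(\bm{c}^{\alpha_r})^{\mathrm{T}}$ is not itself a slice of $\mathcal{X}$, so the reassembly must exploit the full hierarchical structure of the dimension tree, and a dimension count already shows how far this can go --- combined with the forward inclusion, the reverse inclusion forces $\mathrm{rank}(X^{\alpha})=\mathrm{rank}(X^{\alpha_l})\,\mathrm{rank}(X^{\alpha_r})$, so the equality $\mathrm{span}(X^{\alpha})=\mathrm{span}(X^{\alpha_r}\otimes X^{\alpha_l})$ holds exactly under this rank identity, whereas the forward inclusion alone --- which is all the HTACA construction of \cref{sec:Construction} requires, since it only needs the existence of transfer matrices relating each node's column space to those of its children --- holds unconditionally.
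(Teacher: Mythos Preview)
The paper does not supply its own proof of \cref{lem:HTD_foundation}; it simply cites Grasedyck and Kressner--Tobler. So there is no ``paper's proof'' to compare against, only your argument versus the literature.

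Your forward inclusion $\mathrm{span}(X^{\alpha})\subseteq\mathrm{span}(X^{\alpha_r}\otimes X^{\alpha_l})$ is correct and is essentially the nestedness argument in the cited references: each column of $X^{\alpha}$ is the vectorization of a slice $S_{\bm{i}^{\alpha^c}}$, and factoring that slice through the column and row spaces of $X^{\alpha_l}$ and $X^{\alpha_r}$ gives the result via \cref{prop:kronecker_outer_relation}.

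More importantly, you are right that the reverse inclusion is \emph{false} in general, and the lemma as written in the paper is an overstatement. A small counterexample: take $d=3$, $\alpha=\{1,2\}$, $\alpha_l=\{1\}$, $\alpha_r=\{2\}$, and $\mathcal{X}\in\mathbb{C}^{2\times2\times2}$ with $\mathcal{X}(i,j,k)=\delta_{i=j=k}$. Then $\mathrm{rank}(X^{\{1,2\}})=2$ while $\mathrm{rank}(X^{\{1\}})=\mathrm{rank}(X^{\{2\}})=2$, so $\mathrm{rank}(X^{\{2\}}\otimes X^{\{1\}})=4$; the spans cannot coincide. Your observation that equality would force $\mathrm{rank}(X^{\alpha})=\mathrm{rank}(X^{\alpha_l})\,\mathrm{rank}(X^{\alpha_r})$ is exactly the obstruction. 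The cited references in fact only assert the inclusion (Grasedyck calls it the nestedness property), and the paper's own lead-in sentence (``the column space of $X^{\alpha}$ \emph{lies within} that of the Kronecker product'') gets it right before the formal lemma overstates it.

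You are also correct that only the forward inclusion is needed downstream: the existence of the transfer matrix $B_\alpha$ in \eqref{eq:HTD_foundation}, and hence the entire HTD and HTACA machinery of \cref{sec:Construction}, rests solely on $\mathrm{span}(U_\alpha)\subseteq\mathrm{span}(U_{\alpha_r}\otimes U_{\alpha_l})$. So your proposal proves what is true, identifies what is false, and explains why the paper is unaffected --- that is the complete picture.
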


Building on \cref{lem:HTD_foundation}, let \(\mathcal{X} \in \mathcal{H}\text{-Tucker}((r_\alpha)_{\alpha\in\mathcal{T}})\) denote a tensor in HTD format. We define the frame tree, \(\alpha\)-frame, and transfer matrix (tensor) as follows.
\begin{definition}
(Frame tree, \(\alpha\)-frame, transfer matrix).    
A matrix \(U_\alpha \in \mathbb{C}^{\mathbb{I}_\alpha \times r_\alpha}\) satisfying \(\mathrm{span}(U_\alpha) = \mathrm{span}(X^{\alpha})\) is called an \(\alpha\)-frame of \(\mathcal{X}\).  
A collection \((U_\alpha)_{\alpha \in \mathcal{T}}\) of such frames is called a frame tree.  
According to \cref{lem:HTD_foundation}, for any non-leaf node \(\alpha\) with children \(\alpha_l\) and \(\alpha_r\), there exists a matrix \(B_\alpha \in \mathbb{C}^{r_{\alpha_l}r_{\alpha_r} \times r_\alpha}\) such that
\begin{equation}\label{eq:HTD_foundation}
    U_\alpha = (U_{\alpha_r} \otimes U_{\alpha_l}) B_\alpha,
\end{equation}
where $B_\alpha$ is referred to as the transfer matrix at node \(\alpha\).
\end{definition}

By unifying these concepts, we may now define the HTD of a tensor as follows.
\begin{definition}
\label{def:HTD}
(HTD).  
Given \(\mathcal{X} \in \mathcal{H}\text{-Tucker}((r_\alpha)_{\alpha \in \mathcal{T}})\), the HTD of \(\mathcal{X}\) consists of a set of transfer matrices \((B_\alpha)_{\alpha \in \mathcal{N}(\mathcal{T})}\) and leaf frames \((U_\alpha)_{\alpha \in \mathcal{L}(\mathcal{T})}\), such that the recursive relation~\eqref{eq:HTD_foundation} holds for all non-leaf nodes.  
We denote this decomposition as  
\[
\mathcal{X} = ((B_\alpha)_{\alpha\in\mathcal{N}(\mathcal{T})},\ (U_\alpha)_{\alpha\in\mathcal{L}(\mathcal{T})}).
\]
\end{definition}

As seen in \cref{def:HTD}, the HTD stores the transfer tensors at the non-leaf nodes of the dimension tree, as well as the bases at the leaves of the dimension tree. The following proposition relates the dimension, mode sizes, and hierarchical rank with the storage complexity of this format.
\begin{proposition}\label{prop:HTD_storage}
Let $N = \max\limits_{1 \leq k \leq d} \{N_k\}$ and $r = \max\limits_{\alpha \in \mathcal{T}} \{r_\alpha\}$. Then the storage complexity of $\mathcal{X} \in \mathbb{C}^{N_1 \times \cdots \times N_d}$ for the HTD with hierarchical rank $(r_{\alpha})_{\alpha\in\mathcal{T}}$ is
\begin{equation}
    dNr + (d - 2)r^3 + r^2.
\end{equation}
\end{proposition}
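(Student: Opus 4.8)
The plan is to count storage contributions node by node in the dimension tree, using the structure from \cref{def:HTD}: the HTD stores one leaf frame $U_\alpha \in \mathbb{C}^{\mathbb{I}_\alpha \times r_\alpha}$ for each $\alpha \in \mathcal{L}(\mathcal{T})$ and one transfer matrix $B_\alpha \in \mathbb{C}^{r_{\alpha_l} r_{\alpha_r} \times r_\alpha}$ for each $\alpha \in \mathcal{N}(\mathcal{T})$. First I would handle the leaves: there are exactly $d$ of them, each corresponding to a single mode $k$, so $U_{\{k\}} \in \mathbb{C}^{N_k \times r_{\{k\}}}$ contributes $N_k r_{\{k\}}$ entries; bounding $N_k \le N$ and $r_{\{k\}} \le r$ gives a total leaf cost of at most $dNr$.

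Next I would handle the non-leaf nodes. By the remark following the dimension tree definition, there are exactly $d-1$ non-leaf nodes. For each such node $\alpha$ with children $\alpha_l, \alpha_r$, the transfer matrix $B_\alpha$ has $r_{\alpha_l} r_{\alpha_r} r_\alpha$ entries. Here I would separate the root from the other non-leaf nodes: at the root, $r_\alpha = 1$ (as noted after the hierarchical rank definition), so its transfer matrix contributes only $r_{\alpha_l} r_{\alpha_r} \le r^2$ entries. The remaining $d-2$ non-leaf nodes each contribute at most $r \cdot r \cdot r = r^3$ entries. Summing the leaf cost, the $(d-2)$ interior non-leaf nodes, and the root gives the bound $dNr + (d-2)r^3 + r^2$, which matches the claimed expression.

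The one point that needs care — and what I'd flag as the main (minor) obstacle — is the bookkeeping on node counts and the treatment of degenerate small cases. I would want to confirm that a binary dimension tree on $d$ modes really has $d$ leaves and $d-1$ non-leaf nodes (standard for binary trees where every internal node has two children, and cited in the excerpt), and that the root is always a non-leaf node when $d \ge 2$, so that peeling it off leaves exactly $d-2$ interior non-leaf nodes. For $d = 1$ the formula degenerates (a single leaf, no non-leaf nodes), and for $d = 2$ the $(d-2)r^3$ term vanishes, consistent with there being only the root among non-leaf nodes; I would note that the stated formula is intended as an upper bound for $d \ge 2$. Since the statement asks only for the storage complexity with ranks bounded by $(r_\alpha)_{\alpha \in \mathcal{T}}$, replacing each $N_k$ by $N$ and each $r_\alpha$ by $r$ is exactly the right relaxation, and no further estimation is needed.
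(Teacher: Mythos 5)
Your proposal is correct: the paper states \cref{prop:HTD_storage} without proof, and your node-by-node count ($d$ leaf frames of size at most $Nr$, the root transfer matrix of size at most $r^2$ since the root rank is $1$, and the remaining $d-2$ transfer matrices of size at most $r^3$ each) is exactly the standard derivation underlying the stated bound. Your side remarks on the $d=1$ and $d=2$ degenerate cases are fine and do not affect the result.
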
 
\vspace{-0.5cm} An attractive feature of the HTD is that the dimension $d$ no longer appears in the exponent of the mode size $N$. Consequently, if $r \ll N$, then this format avoids the curse of dimensionality.

In \cite{grasedyck2010hierarchical_SIMAA}, two types of algorithms are proposed to truncate an explicit high-order tensor $\mathcal{X} \in \mathbb{C}^{N_1\times\cdots\times N_d}$ to an HTD with a prescribed hierarchical rank $(r_\alpha)_{\alpha\in\mathcal{T}}$ or a given tolerance $\epsilon$.  
The first type proceeds from the \emph{root to the leaves}, applying successive projections to the tensor, at a computational cost of $\mathcal{O}(N^{3d/2})$.  
The second type proceeds from the \emph{leaves to the root}, reducing the cost to \(\mathcal{O}(N^{d-1})\).  
Both maintain the same approximation factor \(\sqrt{2d - 3}\) and apply singular value decompositions (SVDs) to the matricizations \((X^{\alpha})_{\alpha\in\mathcal{T}}\), which requires global access to entries of the tensor \cite{grasedyck2010hierarchical_SIMAA,kressner2012htucker}. Other operations for the HTD have been developed, including arithmetic, orthogonalization, tensor contraction, rank truncation, and more. We refer the reader to \cite{kressner2012htucker} and references therein for a comprehensive treatment of these algorithms.

\subsection{Construction}\label{sec:Construction}

This section presents the construction of the HTACA which generalizes the matrix ACA algorithm used by the SLAR method in \cite{zheng2025semi} to tensors. The core idea is to reduce the cost of computing low-rank approximations of a tensor by exploiting the recursive structure of HTD along with local access to its entries. We begin with a key relation for the \emph{root node} of the dimension tree, which links the tensor data at this node to a matrix factorization involving its left and right child frames. The same form will later be applied to other non-leaf nodes when they are regarded as the root of a corresponding subtree. This relation is formalized in the following proposition.

\begin{proposition}\label{prop:HTACA_foundation}
    For the root node $\alpha = \alpha_l \cup \alpha_r \in \mathcal{T}$, the relation
    \begin{equation}\label{eq:HTACA_foundation_1}
        \mathrm{vec}(\mathcal{X}) = U_{\alpha} = (U_{\alpha_r} \otimes U_{\alpha_l}) B_{\alpha}
    \end{equation}
    is equivalent to
    \begin{equation}\label{eq:HTACA_foundation_final}
        X^{\alpha_l} = U_{\alpha_l} \mathcal{B}_{\alpha} (U_{\alpha_r})^{\mathrm{T}}
    \end{equation}
    where $U_{\alpha_l}$ and $U_{\alpha_r}$ are the $\alpha_l$- and $\alpha_r$-frames of $\mathcal{X}$, respectively. Additionally, the matrix $B_{\alpha} \in \mathbb{C}^{r_{\alpha_l}r_{\alpha_r}}$ is the transfer matrix, and $\mathcal{B}_{\alpha} \in \mathbb{C}^{r_{\alpha_l} \times r_{\alpha_r}}$ satisfies the identity $\mathrm{vec}(\mathcal{B}_{\alpha}) = B_{\alpha}$.
\end{proposition}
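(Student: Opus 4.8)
The plan is to unwind the vectorization identity \eqref{eq:HTACA_foundation_1} using the relationship between Kronecker products and outer products of vectors (\cref{prop:kronecker_outer_relation}), then recognize the resulting sum as a matrix product. I would start from the right-hand side $(U_{\alpha_r} \otimes U_{\alpha_l}) B_\alpha$ and write $B_\alpha = \mathrm{vec}(\mathcal{B}_\alpha)$, so that $B_\alpha$ is a column vector of length $r_{\alpha_l} r_{\alpha_r}$ whose entries are indexed by pairs $(p,q)$ with $1 \le p \le r_{\alpha_l}$, $1 \le q \le r_{\alpha_r}$, arranged in the column-major order fixed by the matricization convention \eqref{eq:index_1to1_map_1}. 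Using the column-wise description of the Kronecker product quoted in the text, namely $U_{\alpha_r} \otimes U_{\alpha_l} = [\, \bm{u}^{(r)}_1 \otimes \bm{u}^{(l)}_1 \;\; \bm{u}^{(r)}_1 \otimes \bm{u}^{(l)}_2 \;\; \cdots \,]$ where $\bm{u}^{(l)}_p$ and $\bm{u}^{(r)}_q$ denote the columns of $U_{\alpha_l}$ and $U_{\alpha_r}$, the matrix-vector product expands as
\begin{equation*}
    (U_{\alpha_r} \otimes U_{\alpha_l}) B_\alpha = \sum_{p=1}^{r_{\alpha_l}} \sum_{q=1}^{r_{\alpha_r}} (\mathcal{B}_\alpha)_{pq} \, \bigl( \bm{u}^{(r)}_q \otimes \bm{u}^{(l)}_p \bigr).
\end{equation*}

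Next I would apply \cref{prop:kronecker_outer_relation} in the direction $\bm{u}^{(r)}_q \otimes \bm{u}^{(l)}_p = \mathrm{vec}\bigl( \bm{u}^{(l)}_p (\bm{u}^{(r)}_q)^{\mathrm{T}} \bigr)$, which identifies the Kronecker product of the two basis columns with the vectorization of their rank-one outer product (a matrix of size $|\mathbb{I}_{\alpha_l}| \times |\mathbb{I}_{\alpha_r}|$). Since vectorization is linear, the whole sum becomes
\begin{equation*}
    (U_{\alpha_r} \otimes U_{\alpha_l}) B_\alpha = \mathrm{vec}\!\left( \sum_{p,q} (\mathcal{B}_\alpha)_{pq} \, \bm{u}^{(l)}_p (\bm{u}^{(r)}_q)^{\mathrm{T}} \right) = \mathrm{vec}\bigl( U_{\alpha_l} \mathcal{B}_\alpha (U_{\alpha_r})^{\mathrm{T}} \bigr),
\end{equation*}
where the last equality is just the standard expansion of a matrix triple product into a weighted sum of outer products of columns. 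On the other hand, $\mathrm{vec}(\mathcal{X}) = X^{\alpha}$ with $\alpha$ the root, and by the definition of matricization and the ordering convention ($\alpha_l$ indices precede $\alpha_r$ indices), the vectorization of $\mathcal{X}$ in this same column-major order is exactly $\mathrm{vec}(X^{\alpha_l})$, i.e. the vectorization of the $\alpha_l$-matricization. Hence \eqref{eq:HTACA_foundation_1} reads $\mathrm{vec}(X^{\alpha_l}) = \mathrm{vec}\bigl(U_{\alpha_l}\mathcal{B}_\alpha (U_{\alpha_r})^{\mathrm{T}}\bigr)$, and since $\mathrm{vec}$ is a bijection between matrices of a fixed shape and vectors, this is equivalent to \eqref{eq:HTACA_foundation_final}. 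Running the argument backward (every step is an equivalence) gives the converse, establishing the claimed equivalence; the fact that $U_{\alpha_l}, U_{\alpha_r}$ remain valid child frames is unchanged since the statement only rearranges the same data.

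The main obstacle is bookkeeping the index orderings so that the three vectorizations in play — $\mathrm{vec}(\mathcal{X})$, the column ordering implicit in $U_{\alpha_r}\otimes U_{\alpha_l}$, and $\mathrm{vec}(X^{\alpha_l})$ with its row/column split — are literally the same linear map. This hinges on the convention already fixed in the excerpt (left-child indices are the "fast" indices, matching the $m=1$ empty-product base case in \eqref{eq:index_1to1_map_1}) together with the fact that in $U_{\alpha_r}\otimes U_{\alpha_l}$ the $U_{\alpha_l}$ factor varies fastest; once these are aligned, \cref{prop:kronecker_outer_relation} does the real work and the rest is linearity of $\mathrm{vec}$. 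I would state the index-ordering alignment explicitly as the one nontrivial check and keep the algebraic manipulation terse.
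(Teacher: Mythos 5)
Your proposal is correct and follows essentially the same route as the paper, whose entire proof is the single sentence that the result ``follows directly from \cref{prop:kronecker_outer_relation}''; you have simply carried out that argument in full, expanding the Kronecker product column-wise, applying the outer-product--vectorization identity, and checking the index-ordering alignment that the paper leaves implicit. No gaps.
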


\begin{proof}
This follows directly from \cref{prop:kronecker_outer_relation}, which relates the Kronecker product to the vectorization of outer products.
\end{proof}

\noindent \Cref{prop:HTACA_foundation} shows that, at the root node (or analogously at the root of any subtree), the associated tensor block can be represented as a low-rank matrix factorization whose left and right factors are the orthonormal frame matrices inherited from its two children. This matrix perspective naturally connects the HTD representation to the ACA framework, which we review in \cref{sec:ACA} before extending it to the hierarchical setting in \cref{subsec:HTACA}.

\subsubsection{Adaptive Cross Approximation for Matrices}\label{sec:ACA}

The ACA algorithm constructs a rank-\( k \) approximation of a given a matrix \( A \in \mathbb{C}^{N_1 \times N_2} \) using a small number of selected rows and columns. It can be regarded as a practical method for computing a CUR decomposition of the form
\begin{equation}\label{eq:CUR}
    A_k = A(:, \mathcal{J})\, A(\mathcal{I}, \mathcal{J})^{-1}\, A(\mathcal{I}, :),
\end{equation}
where \( A_k \) denotes the rank-\( k \) approximation of \( A \), and \( \mathcal{I} \subset \mathbb{I}_1 \), \( \mathcal{J} \subset \mathbb{I}_2 \) are index sets for the selected rows and columns. Here, the MATLAB-style notation is used: \( A(:, \mathcal{J}) \) and \( A(\mathcal{I}, :) \) are submatrices formed by selecting the specified columns and rows, respectively, and \( A(\mathcal{I}, \mathcal{J}) \) is their intersection. Although the optimal index sets in CUR correspond to the so-called \emph{maximum volume} (maxvol) submatrix, finding them is NP-hard in general \cite{civril2007finding}. ACA avoids this difficulty by using a greedy, pivoting strategy that iteratively builds the approximation while accessing only a small portion of the matrix, which is not stored. This locality makes ACA particularly effective for problems where only local matrix entries can be evaluated, such as the large matricizations in HTD representations.

Throughout the ACA construction, we use \(\mathcal{I} = \{i_1, \dots, i_k\}\) and \(\mathcal{J} = \{j_1, \dots, j_k\}\) for the sets of selected row and column indices, respectively. The approximation \(A_k \in \mathbb{C}^{N_1 \times N_2}\) is initialized as \(A_0 = (0)_{N_1 \times N_2}\), and the residual after \(k\) steps is \(R_k = A - A_k\). At step \(k\), the algorithm operates on \(R_{k-1}\) and proceeds in two phases, illustrated schematically in \Cref{fig:ACA_illustration}.

\begin{figure}[htb]
	\begin{center}
    \scalebox{0.7}{
		\begin{overpic}[scale=0.68]{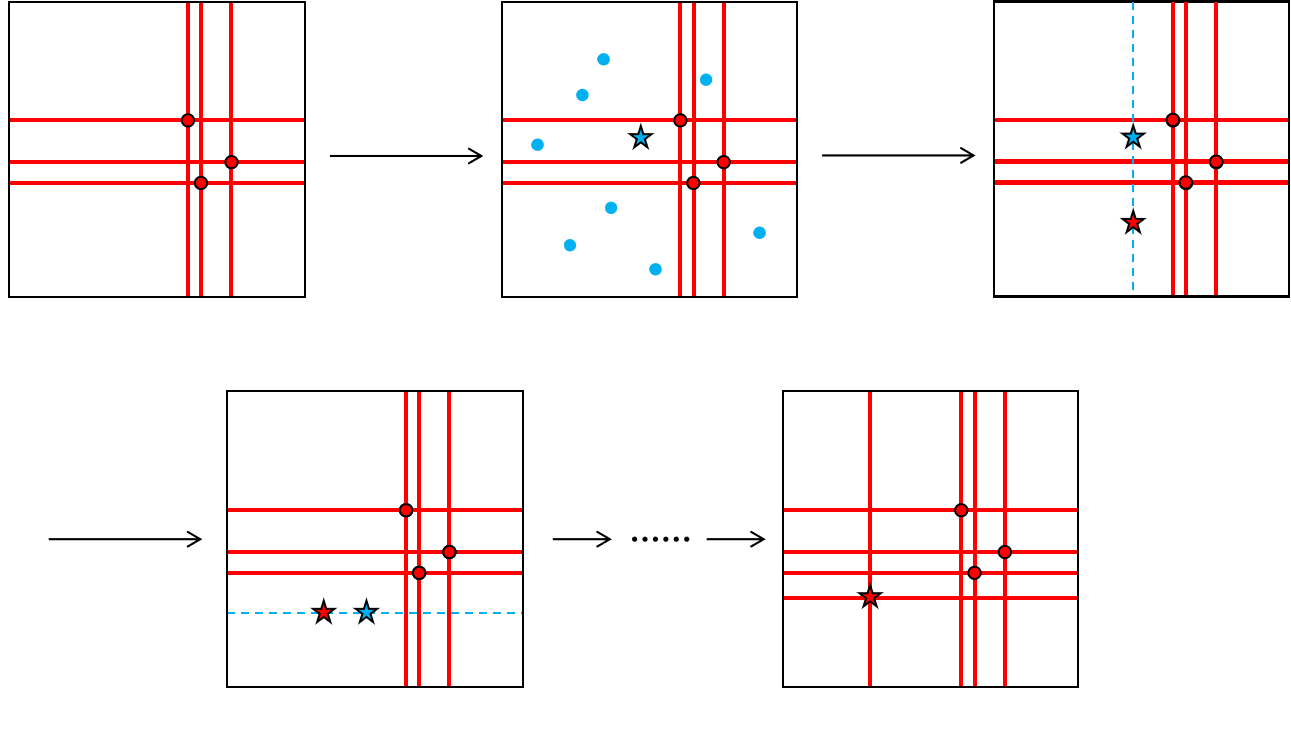}
                \put(24.5,46.5) {initial sample}
                \put(62,46.5) {column search}
                \put(4,17) {row search}
                \put(43.2,17) {rook condition}
                \put(41.2,13.2) {or repeat $m$ times}
                \put(3,32) {$R_{k-1}=A-A_{k-1}$}
                \put(86.5,32) {$j^\ast$}
                \put(15,9) {$i^\ast$}
                \put(66,2.5) {$j_k$}
                \put(58,10) {$i_k$}
                \put(61,-0.2) {$A_k = A_{k-1} + \text{Correction}(i_k,j_k)$}
		\end{overpic}
    }
	\end{center}
	\caption{Schematic illustration of the ACA algorithm for a matrix.}
	\label{fig:ACA_illustration}
\end{figure}

\paragraph{Phase I: Pivot search via residual sampling and greedy refinement}
At step \( k \), the goal is to determine the next pivot position \( (i_k, j_k) \) by locating a locally dominant entry in the residual \(R_{k-1}\). The current approximation \(A_{k-1}\) is constructed from the previously selected pivot rows and columns (red horizontal and vertical lines in the top-left panel of \Cref{fig:ACA_illustration}), with the pivots marked as red circles.

We first sample a small set of residual entries from the Cartesian product of the unused index sets $\mathbb{I}_1\setminus\mathcal{I}\times\mathbb{I}_2\setminus\mathcal{J}$ (top-center panel), denoted by \(\mathcal{C} = \{(i^\ell, j^\ell)\}_{\ell=1}^s\). The entry of largest magnitude in this set is chosen as the initial estimate:
\[
(i^\ast, j^\ast) = \arg\max_{(i,j) \in \mathcal{C}} |R_{k-1}(i,j)|.
\]
This estimate (blue star) is refined by a column search: fixing \(j^\ast\), we scan the \(j^\ast\)-th column (blue dashed line in top-right panel) and update \(i^\ast\) to the row index of the largest entry in that column (red star). A subsequent row search (bottom-left panel) fixes this \(i^\ast\) and scans the corresponding row to update \(j^\ast\) (red star). The column/row search alternates until either the pivot location remains unchanged in two successive steps (rook condition) or the maximum number of refinement rounds \(m\) is reached. The final pivot \((i_k, j_k)\) (bottom-right panel) is added to \(\mathcal{I}\) and \(\mathcal{J}\).

\paragraph{Phase II: Rank-one correction}
After determining the pivot \( (i_k, j_k) \), the approximation \( A_k \) is updated via a rank-one outer product correction:
\begin{equation}\label{eq:matrix_aca_rank_one_correction}
    A_k = A_{k-1} + \frac{1}{R_{k-1}(i_k, j_k)} R_{k-1}(:, j_k) R_{k-1}(i_k, :),
\end{equation}
where the residual row and column corresponding to the pivot location are extracted from \( R_{k-1} \). This update, illustrated in the bottom-right panel of \Cref{fig:ACA_illustration}, completes the step from \( A_{k-1} \) to \( A_k \).  
At every iteration, this update is mathematically equivalent to constructing a CUR decomposition of the form \eqref{eq:CUR} using the current pivot sets \(\mathcal{I},\mathcal{J}\), as rigorously established in~\cite{shi2024distributed}.  
Consequently, \(A_k\) inherits the interpolation property of CUR, satisfying
\(
A_k(i,:) = A(i,:),\quad A_k(:,j) = A(:,j), \quad \forall\, i \in \mathcal{I},\ j \in \mathcal{J},
\)
which ensures that \(A_k\) exactly matches the original matrix on all selected rows and columns. Even though this is a heuristic approach, in practice, if the target matrix can be well approximated by a low rank decomposition, and that relevant rows and columns are correctly identified, then the pivot value tends to zero and $A_k$ well approximates $A$, as iteration continues. The recursion terminates when the magnitude of the most recent pivot falls below a user-defined tolerance:
\[
|R_{k-1}(i_k, j_k)| < \varepsilon_{\mathrm{C}}.
\]
Because the pivot is the largest residual entry detected by the greedy search, this condition approximately controls the residual maximum norm, \(\|A - A_{k-1}\|_{\max}\).

The initial sample size $s$ used in Phase~I is a tunable parameter. A larger $s$ generally improves robustness by better identifying dominant residual entries, especially for large matrices that arise from matricizations used to build HTD representations. The same logic applies to the parameter $m$, which specifies the maximum number of pivot refinement rounds. In our application, accessing tensor entries requires the evaluation of a local PDE solver, so both $s$ and $m$ are adaptively set according to the size of the corresponding matricization.

\subsubsection{HTACA Construction}\label{subsec:HTACA}

We now present the HTACA algorithm, which enables memory-efficient tensor compression using only localized access to tensor entries. In \Cref{fig:4DHTACA}, we provide an intuitive visualization of the HTACA algorithm for a fourth-order tensors to convey the high-level idea. For a fourth-order tensor $\mathcal{X}$, we apply \Cref{prop:HTACA_foundation} to the root node of the tree shown in \Cref{fig:balanced_tree_4d}, and perform the ACA algorithm to approximate $X^{\{1,2\}}$.  As illustrated in \Cref{fig:4DHTACA}, the selected columns (orange) and rows (blue) are vectors of size $N_1 N_2$ or $N_3 N_4$, respectively. Applying \Cref{prop:HTACA_foundation} at node $\{1,2\}$ or $\{3,4\}$ in \Cref{fig:balanced_tree_4d} reshapes these vectors into matrices. We then apply ACA in a recursive manner to these smaller matrices to approximate them. This proceeds until the ACA applied at the root node $\{1,2,3,4\}$ terminates. This recursive algorithm is quite general and naturally extends to higher-order tensors and arbitrary (binary) dimension trees. To formalize this recursive process, we next define the notion of subtrees and introduce the concept of a subtree pool.

\begin{figure}[htb]
    \begin{center}
        \begin{overpic}[scale=0.45]{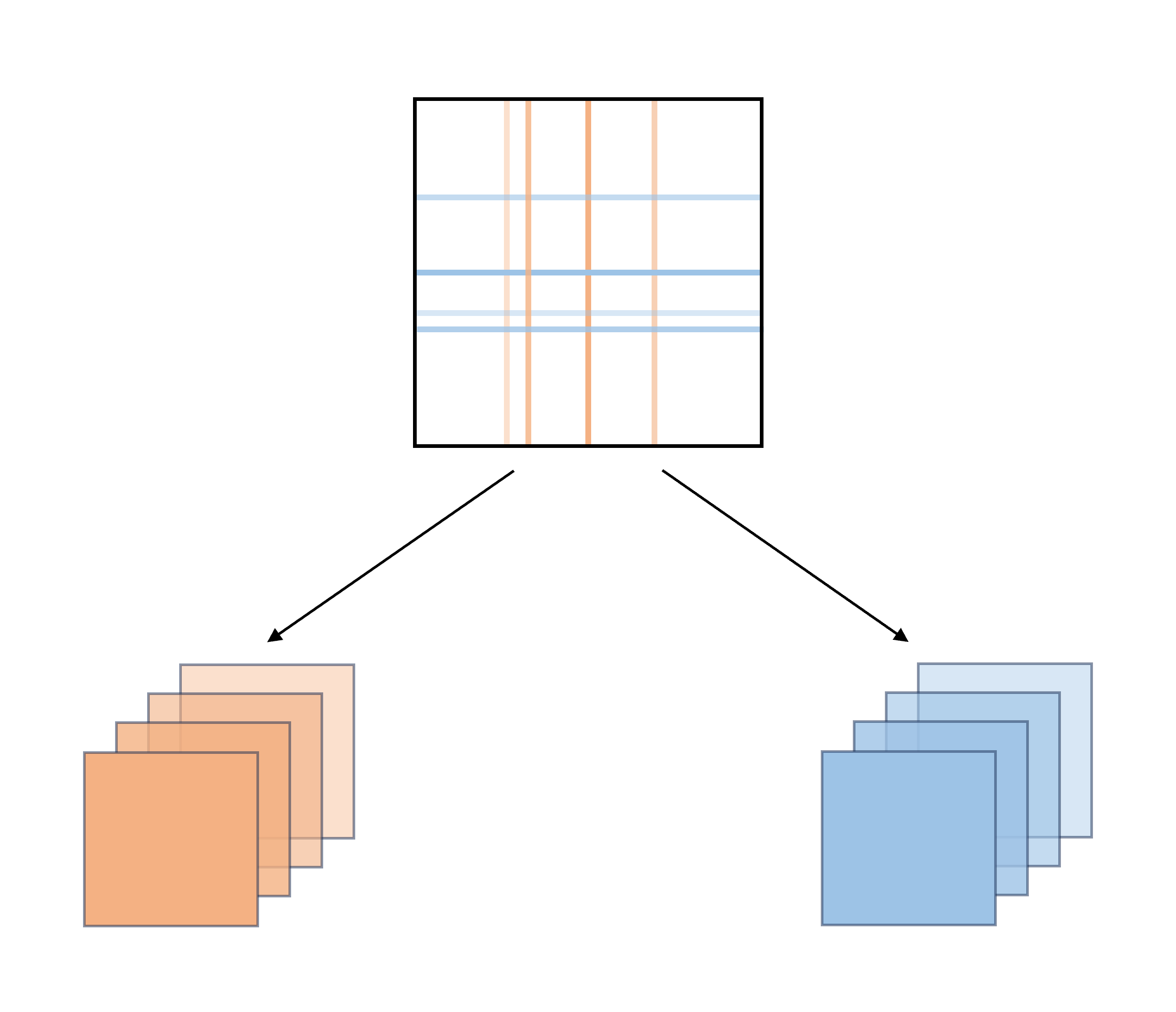}
            \put(13,3){\small$N_2$}
            \put(2,13){\small$N_1$}
            \put(76,3){\small$N_4$}
            \put(64.5,13){\small$N_3$}
            \put(45,79){\small$N_3N_4$}
            \put(24.5,61){\small$N_1N_2$}
            \put(75,61){\small$X^{\{1,2\}}\in\mathbb{C}^{N_1 N_2\times N_3 N_4}$}
            \put(65,62){\begin{tikzpicture}
                \draw[->,thick] (0.7,0.4) -- (0,0.4);
            \end{tikzpicture}}
        \end{overpic}
    \end{center}
    \caption{Schematic illustration of 4-D HTACA.}
    \label{fig:4DHTACA}
\end{figure}

\begin{definition}
(Subtree and subtree pool). Let \(\mathcal{T}\) be a dimension tree associated with a tensor \(\mathcal{X} \in \mathbb{C}^{N_1 \times \cdots \times N_d}\), where each node \(\alpha \in \mathcal{T}\) corresponds to a subset of modes \(\alpha \subseteq \{1,\dots,d\}\). A \emph{subtree} \(\mathcal{T}_\alpha\) is defined as the full subtree rooted at node \(\alpha\), containing all its descendants \(\beta \subseteq \alpha\). If \(\alpha\) is a leaf node (i.e., \(\alpha = \{\mu\}\)), we call \(\mathcal{T}_\alpha\) \emph{degenerate} and view it as a trivial single-node subtree. The \emph{subtree pool} is the collection of all such subtrees,
\(
\mathcal{P}_\mathcal{T} := \{\mathcal{T}_\alpha \mid \alpha \in \mathcal{T}\},
\)
i.e., one rooted subtree for each node of \(\mathcal{T}\).
\end{definition}

We begin the construction from the root node \(\alpha = \text{root}(\mathcal{T}) = \{1,\dots,d\}\), where the goal is to approximate the full tensor \(\mathcal{X}\) via a sequence of low-rank corrections. By \cref{prop:HTACA_foundation}, this task is equivalent to constructing a low-rank factorization of the matricization \(X^{\alpha_l} \in \mathbb{C}^{\prod_{\mu \in \alpha_l} N_\mu \times \prod_{\nu \in \alpha_r} N_\nu},\)
where \(\alpha_l\) and \(\alpha_r\) denote the left and right children of \(\alpha\). The matrix \(X^{\alpha_l}\) encodes all information of $\mathcal{X}$ and serves as the first target of approximation. In HTACA, neither full rows nor full columns of \(X^{\alpha_l}\) are available directly. Instead, each row or column corresponds to the vectorization of a smaller subtensor, which is itself defined over a subtree in \( \mathcal{P}_\mathcal{T}\). This recursive structure allows us to repeatedly apply \cref{prop:HTACA_foundation} on smaller subtrees until the relevant fiber information can be fully accessed at the leaf level. Before describing the main HTACA algorithm, we first describe how pivot entries are recursively identified using this residual-driven strategy in Phase~I.

\paragraph{Phase I: Recursive Pivot Search}  
To enable a recursive pivot search, we define the residual tensor \(\mathcal{R}_\alpha := \mathcal{X}_\alpha - \widetilde{\mathcal{X}}_\alpha\), where \(\widetilde{\mathcal{X}}_\alpha\) denotes the current HTACA of a tensor $\mathcal{X}_\alpha$ on $\mathcal{T}_\alpha$. While \(\mathcal{X}_\alpha\) is assumed to support localized access by design, it is important to note that \(\widetilde{\mathcal{X}}_\alpha\), as an HTD, also permits local access to its entries. In particular, evaluating a single entry of \(\widetilde{\mathcal{X}}_\alpha\) costs at most \(\mathcal{O}(dr^3)\) operations, where \(r\) is the maximum hierarchical rank. Consequently, the residual \(\mathcal{R}_\alpha\) can be evaluated on demand, at arbitrary indices, without explicitly forming the full tensor. This property allows the pivot search procedure to operate in a fully localized manner, guided by the magnitude of the residual. The complete pseudocode for the recursive pivot search is provided in \cref{alg:recursive_pivot_search}.

\begin{algorithm}[htb]
\caption{\texttt{recursive\_pivot\_search}: Residual-guided pivot search}
\label{alg:recursive_pivot_search}
\begin{algorithmic}[1]

\Require Residual accessor $\mathcal{R}_\alpha$; node $\alpha \in \mathcal{T}$; subtree $\mathcal{T}_\alpha$;
index sets $\mathcal{I}^{\alpha_l}, \mathcal{I}^{\alpha_r}$;
candidate pivot $\bm{i}^\alpha$ (optional); recursion depth $n$

\Ensure Pivot value $p$; pivot index $\bm{i}^\alpha \in \prod_{\mu \in \alpha}\mathbb{I}_\mu$;
updated index sets $\mathcal{I}^{\alpha_l}, \mathcal{I}^{\alpha_r}$

\If{depth$(\mathcal{T}_\alpha) > 0$}
  \State Sample $s(\alpha)$ candidate index pairs
  \Comment{see \eqref{eq:sampling_params}}
  \State $\mathcal{C} \gets \{(i_\ell^{\alpha_l}, i_\ell^{\alpha_r})\}_{\ell=1}^{s(\alpha)}
          \subset (\mathbb{I}_{\alpha_l}\setminus\mathcal{I}^{\alpha_l})
          \times (\mathbb{I}_{\alpha_r}\setminus\mathcal{I}^{\alpha_r})$

  \If{prescribed $\bm{i}^\alpha$ exists}
    \State Decompose $\bm{i}^\alpha$ into $(\bm{i}^{\alpha_l}, \bm{i}^{\alpha_r})$
    \State Map to matricized indices $(i^{\alpha_l}, i^{\alpha_r})$
    \State $\mathcal{C} \gets \mathcal{C}\cup\{(i^{\alpha_l}, i^{\alpha_r})\}$
  \EndIf

  \State Evaluate residuals and select initial pivot:
  \State $(\bm{i}^{\alpha_l,*}, \bm{i}^{\alpha_r,*}) \gets
         \arg\max_{(i^{\alpha_l},i^{\alpha_r})\in\mathcal{C}}
         |\mathcal{R}_\alpha(\bm{i}^{\alpha_l}, \bm{i}^{\alpha_r})|$

  \For{$k = 1:m(\alpha)$}
    \Comment{see \eqref{eq:sampling_params}}
    \State $\mathcal{R}_{\alpha_l} \gets \mathcal{R}_\alpha(:, \bm{i}^{\alpha_r,*})$
    \State $[p, \bm{i}^{\alpha_l,*}, \sim, \sim] \gets
           \texttt{recursive\_pivot\_search}\left(\mathcal{R}_{\alpha_l}, \alpha_l,
             \mathcal{T}_{\alpha_l}, \emptyset, \emptyset, \bm{i}^{\alpha_l,*}, n+1\right)$

    \State $\mathcal{R}_{\alpha_r} \gets \mathcal{R}_\alpha(\bm{i}^{\alpha_l,*}, :)$
    \State $[p, \bm{i}^{\alpha_r,*}, \sim, \sim] \gets
           \texttt{recursive\_pivot\_search}\left(\mathcal{R}_{\alpha_r}, \alpha_r,
             \mathcal{T}_{\alpha_r}, \emptyset, \emptyset, \bm{i}^{\alpha_r,*}, n+1\right)$

    \If{pivot indices unchanged}
      \State \textbf{break} \Comment{rook condition met}
    \EndIf
  \EndFor

  \State Assemble pivot multi-index:
  $\bm{i}^\alpha \gets (\bm{i}^{\alpha_l,*}, \bm{i}^{\alpha_r,*})$

  \If{$n=0$}
    \State Update index sets:
    $\mathcal{I}^{\alpha_l}\gets\mathcal{I}^{\alpha_l}\cup\{i^{\alpha_l}\}$,
    $\mathcal{I}^{\alpha_r}\gets\mathcal{I}^{\alpha_r}\cup\{i^{\alpha_r}\}$
  \Else
    \State $\mathcal{I}^{\alpha_l}\gets\emptyset$, $\mathcal{I}^{\alpha_r}\gets\emptyset$
  \EndIf

\Else
  \State $\bm{i}^\alpha \gets \arg\max_i |\mathcal{R}_\alpha(i)|$
  \State $p \gets \mathcal{R}_\alpha(\bm{i}^\alpha)$
  \State $\mathcal{I}^{\alpha_l}\gets\emptyset$, $\mathcal{I}^{\alpha_r}\gets\emptyset$
\EndIf

\end{algorithmic}
\end{algorithm}

The recursive pivot search closely follows Phase~I of the classical matrix ACA algorithm with some additional modifications for tensors. In particular, we adapt the initial sample size \( s(\alpha) \) and the maximum number of refinement rounds \(m(\alpha)\) from the matrix ACA algorithm to account for the order of \(\mathcal{T}_{\alpha}\). Specifically, we define
\begin{equation}
    s(\alpha) := 3^{\text{order}(\alpha)}, \quad m(\alpha) := \text{order}(\alpha) - 1, \label{eq:sampling_params}
\end{equation}
where order($\cdot$) represents the number of modes in a node $\alpha$. This heuristic is informed by empirical observations and provides a practical balance between sampling efficiency and refinement accuracy. We do not pursue optimal tuning of these parameters, as their best values may be application-dependent.

The algorithm first checks whether the current subtree $\mathcal{T}_{\alpha}$ has depth greater than zero, i.e., whether $\mathcal{R}_{\alpha}$ corresponds to a subtensor of order at least two. If $\text{depth}(\mathcal{T}_\alpha) = 0$, then the current node is a leaf node, and the pivot index is identified by a maximum search. Otherwise, if $\text{depth}(\mathcal{T}_\alpha) > 0$, the algorithm samples $s(\alpha)$ index pairs (see \eqref{eq:sampling_params}) from the Cartesian product of the unused index sets $(\mathbb{I}_{\alpha_l}\setminus \mathcal{I}^{\alpha_l}) \times (\mathbb{I}_{\alpha_r}\setminus \mathcal{I}^{\alpha_r})$, which are stored in a set $\mathcal{C}$. This candidate set $\mathcal{C}$ can also be augmented with a prescribed pivot from an outer recursive call, if desired. The residual is evaluated at all candidate pairs in $\mathcal{C}$, and the index pair with the largest absolute residual value is selected as the initial pivot.

Once the initial pivot is selected, we perform up to $m(\alpha)$ refinement rounds (see \eqref{eq:sampling_params}) to improve upon this initial pivot. In each round, the residual is contracted along one subset of modes (either $\alpha_l$ or $\alpha_r$) to form a corresponding subtensor accessor. For example, fixing $\bm{i}^{\alpha_r,*}$ produces the left-contracted accessor $\mathcal{R}_{\alpha_l} = \mathcal{R}_{\alpha}(:, \bm{i}^{\alpha_r,*})$, which is supported on $\mathcal{T}_{\alpha_l}$. The recursive pivot search is then applied on this subtensor to refine the left pivot index. Once a left pivot index has been identified, we repeat this process symmetrically for $\alpha_r$, producing $\mathcal{R}_{\alpha_r} = \mathcal{R}_{\alpha}(\bm{i}^{\alpha_l,*}, :)$ along with an analogous refined right pivot index. If both indices remain unchanged during a refinement round, the rook condition is satisfied and the refinement loop terminates early. This alternating refinement of left and right pivot indices emulates the classical column/row alternating search in matrix ACA algorithm, with \cref{prop:HTACA_foundation} providing the theoretical connection between them.

After the refinement terminates, the multi-index of the pivot is assembled as $\bm{i}^{\alpha} = (\bm{i}^{\alpha_l,*},\bm{i}^{\alpha_r,*})$, which corresponds to row and column indices in the $\alpha_{l}$-matricization of the residual $\mathcal{R}_{\alpha}$. In order to prepare for subsequent iterations, the current index sets $\mathcal{I}^{\alpha_l}$ and $\mathcal{I}^{\alpha_r}$ should be updated at the root node of the tree, for which the recursion depth $n = 0$. For deeper recursive calls ($n > 0$), the index sets are discarded, as these contracted subtensors are not revisited.

\paragraph{Phase II: Recursive Hierarchical Residual Correction} The second phase concerns an iterative refinement of a hierarchical approximation \(\widetilde{\mathcal{X}}\) through a sequence of localized ``rank-one" corrections applied to subtensors that are recursively approximated on the corresponding subtrees. Together with the recursive pivot search of Phase~I, this phase defines the HTACA algorithm, whose complete pseudocode is given in \cref{alg:HTACA}.

At a given subtree $\mathcal{T}_{\alpha}$, the algorithm first checks if the node is a leaf. In that case, the local tensor is simply vectorized and returned. Otherwise, the construction begins with a zero-initialized HTD and performs a sequence of rank-one updates to reduce the residual error $\mathcal{R}_{\alpha} := \mathcal{X}_{\alpha} - \widetilde{\mathcal{X}}_{\alpha}$. At each iteration, a dominant pivot location \(\bm{i}^\alpha\) is identified using the recursive pivot search described in \cref{alg:recursive_pivot_search}. The residual is then contracted to form two lower-dimensional subtensors, \(\mathcal{R}_{\alpha_l}\) and \(\mathcal{R}_{\alpha_r}\) which are recursively approximated as HTDs on the left and right subtrees. These contracted residuals are never stored explicitly but can be evaluated using functional accessors. Each of these recursive calls returns a pair of hierarchical representations, which are merged into an HTD \(\widetilde{\mathcal{R}}_{\alpha}\) over the subtree $\mathcal{T}_{\alpha}$ whose transfer matrix at its root contains the pivot weight. In particular, we define the scalar transfer matrix as
\(
        \widetilde{B}_\alpha = 1/(p + \operatorname{sign}(p + 0^+)\cdot 10^{-15}),
\)
where \(p\) is the residual pivot and \( \operatorname{sign}(p + 0^+) \cdot 10^{-15} \) serves as a safeguard against division by zero. With this final assignment, the HTD \(\widetilde{\mathcal{R}}_\alpha\) on \(\mathcal{T}_\alpha\) is fully defined. This update mirrors the matrix ACA rank-one correction step~\eqref{eq:matrix_aca_rank_one_correction}. For example, the \(\alpha_l\)-matricization of \(\widetilde{\mathcal{R}}_{\alpha}\) yields
\begin{equation*}
\widetilde{R}_{\alpha}^{\alpha_l} = \frac{1}{p + \operatorname{sign}(p + 0^+)\cdot 10^{-15}}\, \text{vec}(\widetilde{\mathcal{R}}_{\alpha_l}) \, \text{vec}(\widetilde{\mathcal{R}}_{\alpha_r})^\mathrm{T}.    
\end{equation*}
After forming the rank-one correction \(\widetilde{\mathcal{R}}_{\alpha}\), we update the current approximation $\widetilde{\mathcal{X}}_{\alpha}$ using tensor addition. However, the addition of two HTDs is known to cause a rapid increase in intermediate hierarchical ranks, as the result of the addition cannot be represented exactly with the same ranks as the summands \cite{kressner2012htucker}. Therefore, to avoid uncontrolled rank growth, we immediately apply a compression step:
\[
\widetilde{\mathcal{X}}_{\alpha} \gets \texttt{truncate} \Big(\widetilde{\mathcal{X}}_{\alpha} + \widetilde{\mathcal{R}}_{\alpha},~10^{-14} \max_n \| X^{\alpha}(:,n) \|,~r_{\min}(\mathcal{T}_\alpha),~r_{\max}(\mathcal{T}_\alpha) \Big).
\]
Here, the truncation procedure removes numerically redundant components while keeping hierarchical ranks within user-prescribed bounds. We follow the standard hierarchical singular value decomposition (HSVD) approach proposed in~\cite[Algorithm~6]{kressner2012htucker}, originally based on the theory in~\cite[Section~4]{grasedyck2010hierarchical}. The call to \texttt{truncate} represents a function that takes as input: (1) the current HTD to be compressed; (2) an absolute Frobenius-norm tolerance that controls truncation error; (3) a minimal and maximal hierarchical rank for the current subtree. Unlike the final global compression step, which will be discussed later, this local truncation step is not designed to control the global approximation error. Instead, we conservatively set the relative truncation tolerance to a small constant \(10^{-14}\), which ensures that only negligible components are discarded. This prevents uncontrolled rank growth during recursive updates and avoids the amplification of round-off errors. In estimating the normalization factor \(\max_n \| X^{\alpha}(:,n) \|\), we face the challenge that the original tensor \(\mathcal{X}\) is not globally accessible. Therefore, we approximate it using the Frobenius norm of the first rank-one correction encountered in the current subtree. Although this underestimates the true maximum, the resulting truncation is stricter, which empirically helps preserve useful structure.

After all residual-guided rank-one corrections have been assembled through the recursive process, we apply a final truncation step to the entire HTD structure \(\widetilde{\mathcal{X}}_{\alpha}\) at the root node $\alpha = \text{root}(\mathcal{T})$. This ensures that accumulated numerical noise and unnecessary components from intermediate contractions are removed in a globally consistent manner. We denote this process as
\begin{equation*}
    \widetilde{\mathcal{X}}_{\alpha} \gets \texttt{truncate}\Big(\widetilde{\mathcal{X}}_{\alpha}, ~\varepsilon_{\text{Base}}\|\widetilde{\mathcal{X}}_{\alpha}\|, ~r_{\min}(\mathcal{T}), ~r_{\max}(\mathcal{T})\Big).
\end{equation*}
Compared to the conservative local truncation tolerance \(10^{-14} \max_k \| X^{\alpha}(:,k) \|\) used during each intermediate update, this root-level truncation applies a coarser but well-calibrated absolute threshold proportional to the final approximation norm. This design matches the ACA + SVD strategy adopted in our previous work~\cite{zheng2025semi,sands2025adaptive}, which used the SVD to improve the robustness of the algorithm.

\paragraph{Local Pivoting Tolerances and Error Control}
Unlike the HSVD-based truncation tolerance, which controls the low-rank compression error after each correction, the following pivoting tolerance \(\varepsilon_C\) is designed to determine the termination of residual-driven updates throughout the HTACA procedure.

To ensure convergence of the root-level ACA iteration, we assign for each subtree \(\mathcal{T}_\alpha\) a local pivoting tolerance \(\varepsilon_C(\varepsilon_{\text{Base}}, \gamma, \mathcal{T}_\alpha)\), defined as
\begin{equation}\label{eq:hierach_pivoting}
\varepsilon_C(\varepsilon_{\text{Base}}, \gamma, \mathcal{T}_\alpha) = \gamma^{\text{depth}(\mathcal{T}) - \text{depth}(\mathcal{T}_{\alpha})}\varepsilon_{\text{Base}}\|\mathcal{X}\|_{\max}.
\end{equation}
This exponential decay schedule ensures that subtrees closer to the leaves are constructed more accurately. The rationale is that their approximations will be recursively used to build larger residuals at lower levels; hence they must be sufficiently accurate to avoid error accumulation. Since the true maximum entry norm \(\|\mathcal{X}\|_{\max}\) is generally unavailable in our setting, we approximate it by the absolute value of the first pivot encountered during the root-level ACA. Although this underestimates the true value, the resulting conservative tolerance promotes numerical robustness and consistency with our truncation heuristics. In our numerical experiments, we set \(\gamma = 1/10\). Based on empirical observations, more complex or higher-rank tensors benefit from more aggressive decay factors (smaller \(\gamma\)), whereas extremely low-rank tensors may tolerate \(\gamma = 1\) without compromising accuracy or convergence.

\begin{remark}[Rank control parameters in HTACA]
Throughout the HTACA construction, we use four types of rank-related parameters:
\begin{itemize}
  \item \( r_{\min}(\mathcal{T}_\alpha) \), \( r_{\max}(\mathcal{T}_\alpha) \): user-defined lower and upper bounds on the \emph{hierarchical ranks} for each node in the subtree \(\mathcal{T}_\alpha\). These bounds are enforced during the local truncation step to limit the rank growth of the approximation.
  
  \item \( r_{\#,\min}(\mathcal{T}_\alpha) \), \( r_{\#,\max}(\mathcal{T}_\alpha) \): minimal and maximal number of recursive \emph{ACA correction steps} performed during Phase~II on the subtree \(\mathcal{T}_\alpha\). These ensure that a minimum number of updates are performed regardless of pivot size, and that the iteration terminates after a reasonable number of steps even if the pivot threshold is not met.
\end{itemize}
Unless otherwise specified, \( r_{\min}(\mathcal{T}) \), \( r_{\max}(\mathcal{T}) \), \( r_{\#,\min}(\mathcal{T}) \), and \( r_{\#,\max}(\mathcal{T}) \) are inherited by all subtrees in a uniform fashion.
\end{remark}

\begin{algorithm}[h!]
\caption{HTACA for tensors}
\label{alg:HTACA}
\begin{algorithmic}[1]

\Require Tensor $\mathcal{X}_\alpha$; node $\alpha$; subtree $\mathcal{T}_\alpha$;
baseline tolerance $\varepsilon_{\text{Base}}$; decay factor $\gamma$

\Ensure Approximation
$\widetilde{\mathcal{X}}_\alpha =
((B_\beta)_{\beta \in \mathcal{N}(\mathcal{T}_\alpha)},
 (U_\beta)_{\beta \in \mathcal{L}(\mathcal{T}_\alpha)})$

\If{$\text{depth}(\mathcal{T}_\alpha) > 0$}
  \State Initialize zero HTD:
  $\widetilde{\mathcal{X}}_\alpha \gets ((B_\beta)_{\beta \in \mathcal{N}(\mathcal{T}_\alpha)},
  (U_\beta)_{\beta \in \mathcal{L}(\mathcal{T}_\alpha)})$
  \State Initialize: $p \gets \infty$,
  $\mathcal{I}^{\alpha_l}\gets\emptyset$,
  $\mathcal{I}^{\alpha_r}\gets\emptyset$, $k \gets 1$

  \While{$|p| > \varepsilon_C(\varepsilon_{\text{Base}}, \gamma, \mathcal{T}_\alpha)$
  or $k < r_{\#,\min}(\mathcal{T}_\alpha)$} \Comment{see \eqref{eq:hierach_pivoting}}
    \State Assemble residual:
    $\mathcal{R}_\alpha \gets \mathcal{X}_\alpha - \widetilde{\mathcal{X}}_\alpha$
    \State $[p, \bm{i}^\alpha, \mathcal{I}^{\alpha_l}, \mathcal{I}^{\alpha_r}] \gets
      \texttt{recursive\_pivot\_search}\left(\mathcal{R}_\alpha, \alpha, \mathcal{T}_\alpha,
      \mathcal{I}^{\alpha_l}, \mathcal{I}^{\alpha_r}, \sim, 0\right)$

    \State Decompose $\bm{i}^\alpha$ into $(\bm{i}^{\alpha_l}, \bm{i}^{\alpha_r})$
    \State Form contracted residuals:
    $\mathcal{R}_{\alpha_l} \gets \mathcal{R}_\alpha(:, \bm{i}^{\alpha_r})$,
    $\mathcal{R}_{\alpha_r} \gets \mathcal{R}_\alpha(\bm{i}^{\alpha_l}, :)$

    \State
    $\widetilde{\mathcal{R}}_{\alpha_l} = ((\widetilde{B}_\beta)_{\beta\in\mathcal{N}(\mathcal{T}_{\alpha_l})},
       (\widetilde{U}_\beta)_{\beta\in\mathcal{L}(\mathcal{T}_{\alpha_l})})\gets
      \texttt{HTACA}\left(\mathcal{R}_{\alpha_l}, \alpha_l, \mathcal{T}_{\alpha_l},
      \varepsilon_{\text{Base}}, \gamma\right)$
    \State
    $\widetilde{\mathcal{R}}_{\alpha_r}=((\widetilde{B}_\beta)_{\beta\in\mathcal{N}(\mathcal{T}_{\alpha_r})},
       (\widetilde{U}_\beta)_{\beta\in\mathcal{L}(\mathcal{T}_{\alpha_r})}) \gets
      \texttt{HTACA}\left(\mathcal{R}_{\alpha_r}, \alpha_r, \mathcal{T}_{\alpha_r},
      \varepsilon_{\text{Base}}, \gamma\right)$

    \State Set $\widetilde{B}_\alpha \gets 1 / (p + \text{sign}(p)\cdot 10^{-15})$
    \State Assemble rank-one correction:
    $\widetilde{\mathcal{R}}_\alpha \gets
      ((\widetilde{B}_\beta)_{\beta\in\mathcal{N}(\mathcal{T}_\alpha)},
       (\widetilde{U}_\beta)_{\beta\in\mathcal{L}(\mathcal{T}_\alpha)})$

    \State Update approximation:
     \begin{equation*}
         \widetilde{\mathcal{X}}_\alpha \gets
    \texttt{truncate}\left(\widetilde{\mathcal{X}}_\alpha + \widetilde{\mathcal{R}}_\alpha,
      10^{-14}\max_n \|X^\alpha(:,n)\|,
      r_{\min}(\mathcal{T}_\alpha), r_{\max}(\mathcal{T}_\alpha)\right)
     \end{equation*}
    \If{$k+1 > r_{\#,\max}(\mathcal{T}_\alpha)$}
      \State \textbf{break}
    \EndIf

    \State $k \gets k+1$
  \EndWhile

  \If{$\alpha = \text{root}(\mathcal{T})$}
    \State $\widetilde{\mathcal{X}}_\alpha \gets
      \texttt{truncate}\left(\widetilde{\mathcal{X}}_\alpha,
      \varepsilon_{\text{Base}}\|\widetilde{\mathcal{X}}_\alpha\|,
      r_{\min}(\mathcal{T}), r_{\max}(\mathcal{T})\right)$
  \EndIf

\Else
  \State Leaf-level approximation:
  $\widetilde{\mathcal{X}}_\alpha = U_\alpha \gets \text{vec}(\mathcal{X}_\alpha)$
\EndIf

\end{algorithmic}
\end{algorithm}

\section{SLAR Method for the Nonlinear VP System}\label{sec:SLAR}

We begin by defining the SLAR method for linear advection equations, which combines the local SL--FD solver of \cref{sec:SL-FD-high-D} with the HTACA compression technique introduced in \cref{sec:HTACA}.  
Given a high-dimensional linear advection equation~\eqref{eq:advection_ddim} with velocity field $\bm{a}(\bm{x}, t)$, the SLAR method evolves the solution in the HTD format from time $t^n$ to $t^{n+1}$.  
We denote this evolution as
\begin{equation*}
    \mathcal{F}^{n+1} = \text{SLAR}(\mathcal{F}^n, t^n, t^{n+1}, \bm{a}(\bm{x}, t), \varepsilon_{\text{Base}}),
\end{equation*}
where localized access to future-time tensor values is provided by the local SL--FD solver, and the HTD approximation $\mathcal{F}^{n+1}$ is constructed using the HTACA algorithm with user-defined relative tolerance $\varepsilon_{\text{Base}}$.  
Although some of the HTACA-related parameters are omitted here for conciseness, the complete parameter settings will be specified in the tests presented in \cref{sec:numerical_tests}.

To solve the nonlinear VP system, we adopt a third-order RK exponential integrator (RKEI) \cite{cai2021high}, which decomposes the nonlinear characteristic tracing into a sequence of linear advection steps, each using a frozen velocity field.  
This approach allows for a straightforward generalization of our earlier 1D1V method~\cite{zheng2025semi} to 2D2V and 3D3V problems.  
At each stage, the corresponding linear advection problem is solved using the SLAR method defined above:
\begin{equation}\label{eq:CF3}
\begin{aligned}
    \mathcal{F}^{(1)} &= \text{SLAR}(\mathcal{F}^n, t^n, t^{n+1}, \tfrac{1}{3}\bm{v}, \tfrac{1}{3}\bm{E}^n(\bm{x}), \varepsilon_{\text{Base}}), \\
    \mathcal{F}^{(2)} &= \text{SLAR}(\mathcal{F}^n, t^n, t^{n+1}, \tfrac{2}{3}\bm{v}, \tfrac{2}{3}\bm{E}^{(1)}(\bm{x}), \varepsilon_{\text{Base}}), \\
    \mathcal{F}^{n+1,\star} &= \text{SLAR}(\mathcal{F}^{(1)}, t^n, t^{n+1}, \tfrac{2}{3}\bm{v}, -\tfrac{1}{12}\bm{E}^{n}(\bm{x}) + \tfrac{3}{4}\bm{E}^{(2)}(\bm{x}), \varepsilon_{\text{Base}}),
\end{aligned}
\end{equation}
where $\bm{E}^{n}$, $\bm{E}^{(1)}$, and $\bm{E}^{(2)}$ are electric fields computed from $\mathcal{F}^{n}$, $\mathcal{F}^{(1)}$, and $\mathcal{F}^{(2)}$, respectively, using the Poisson solver described in the next subsection, which directly operates on HTD-formatted distributions.

\subsection{FFT-Based Field Solver for HT-Formatted Distributions}\label{sec:FFT_solver}

The self-consistent electric field at each stage of \eqref{eq:CF3} requires the gradient of a scalar potential, obtained by solving the Poisson equation:
\begin{equation}
\label{eq:Poisson equation for phi}
    -\Delta_{\bm{x}} \phi = \rho(\bm{x}, t), 
    \quad 
    \rho(\bm{x}, t) = \rho_0 - \int_{\mathbb{R}^{d_v}} f(\bm{x}, \bm{v}, t) \,\mathrm{d}\bm{v}.
\end{equation}
We consider problems defined on periodic domains, where direct Fourier methods are a natural choice and are fully compatible with the HTACA framework.  
Here, the distribution function $f(\bm{x}, \bm{v})$ is represented as an HTD:
\[
\mathcal{F} = \bigl((B_\alpha)_{\alpha \in \mathcal{N}(\mathcal{T})}, (U_\alpha)_{\alpha \in \mathcal{L}(\mathcal{T})}\bigr).
\]

To evaluate the integral in \eqref{eq:Poisson equation for phi}, we apply midpoint quadrature in the velocity dimensions by summing over the rows of each velocity leaf-node basis and multiplying by the corresponding mesh size.  
This contraction reduces the velocity mode sizes to~1, after which these degenerate modes are removed via dimension squeezing operations supported by standard HTD algorithms~\cite{kressner2012htucker}.  
The result is a low-rank HTD approximation of the charge density,
$
\bm{\rho} = \bigl((B_\alpha)_{\alpha \in \mathcal{N}(\mathcal{T}_{\bm{x}})}, (U_\alpha)_{\alpha \in \mathcal{L}(\mathcal{T}_{\bm{x}})}\bigr),
$
where 
$\mathcal{T}_{\bm{x}}$ is the restriction of the original dimension tree to spatial variables.  
Because $f(\bm{x}, \bm{v})$ decays exponentially as $|\bm{v}| \to \infty$, the quadrature error is dominated by domain truncation~\cite[Chap.~17]{boyd2001chebyshev} rather than discretization error.

Suppose the numerical solution is defined on a periodic domain with lengths $L_{x^{(1)}}, L_{x^{(2)}}, L_{x^{(3)}}$ in each spatial direction.  
Let $\widehat{\phi}$ denote the discrete Fourier transform (DFT) of $\phi$, and let $k_{x^{(1)},i}, k_{x^{(2)},j}, k_{x^{(3)},k}$ be the discrete wavenumbers associated with the domain lengths and grid resolution.  
The DFT diagonalizes the Poisson equation~\eqref{eq:Poisson equation for phi}, yielding for each nonzero mode $(i,j,k)$:
\begin{equation}
    \label{eq:FFT Poisson solution}
    \widehat{\phi}_{i,j,k} = \frac{1}{k_{x^{(1)},i}^{2} + k_{x^{(2)},j}^{2} + k_{x^{(3)},k}^{2}} \, \widehat{\rho}_{i,j,k}, 
    \quad (i,j,k) \neq (0,0,0).
\end{equation}
Since only the gradient of $\phi$ is needed, we enforce $\widehat{\phi}_{0,0,0} = 0$ to ensure a zero-mean solution.  

As the spatial dependence is encoded in the leaf-node bases of the HTD, the DFT can be applied mode-wise to these matrices. The Fourier-transformed HTD of the charge density has the form
$
\widehat{\bm{\rho}} = \bigl((B_\alpha)_{\alpha \in \mathcal{N}(\mathcal{T}_{\bm{x}})}, (\widehat{U}_\alpha)_{\alpha \in \mathcal{L}(\mathcal{T}_{\bm{x}})}\bigr),
$
where each $\widehat{U}_\alpha$ is obtained by applying a 1D FFT to the columns of $U_\alpha$.  
Since $U_\alpha \in \mathbb{C}^{N_\alpha \times r_\alpha}$, the FFT is applied along rows using batched FFTs. This mode-wise FFT achieves complexity $\mathcal{O}(d_x r N \log N)$, compared with the full-grid cost $\mathcal{O}(N^{d_x} \log N)$, and is well suited to parallelization.

To construct the HTD of the frequency-domain solution $\widehat{\phi}(\bm{k})$, we apply HTACA directly to the tensor defined by \eqref{eq:FFT Poisson solution}. The electric field components are then obtained in Fourier space as
\[
\widehat{E}_{x^{(1)}} = -i k_{x^{(1)}} \widehat{\phi}, 
\quad 
\widehat{E}_{x^{(2)}} = -i k_{x^{(2)}} \widehat{\phi}, 
\quad 
\widehat{E}_{x^{(3)}} = -i k_{x^{(3)}} \widehat{\phi},
\]
where $i=\sqrt{-1}$ is the imaginary unit. Finally, inverse FFTs applied to the leaf-node bases of $\mathcal{T}_{\bm{x}}$ yield the HTD representations of $\bm{E}$ in physical space.

\begin{remark}[HTACA for entry-wise tensor definitions]
A notable advantage of HTACA is its ability to compress tensors given by explicit entry-wise formulas without enumerating all entries. For instance, the frequency-domain solution in \eqref{eq:FFT Poisson solution} can be directly compressed without computing a Hadamard product between a pair of HTDs. This allows seamless integration with solvers and avoids explicitly assembling large tensors.
\end{remark}

\begin{remark}[Complex-valued HTD and real-valued recovery]
HTACA supports compression of complex-valued tensors, as required for frequency-domain calculations.  
Although the inverse DFT of $\widehat{\bm{\phi}}$ or $\widehat{\bm{E}}$ yields real-valued tensors, the intermediate HTDs may contain small imaginary parts due to roundoff.  
These are of size $\mathcal{O}(\varepsilon_{\text{Base}})$ and can be discarded by taking the real part when extracting physical quantities.
\end{remark}

\begin{remark}[Off-grid electric field evaluation]
Although $\bm{E}^n(\bm{x})$, $\bm{E}^{(1)}(\bm{x})$, and $\bm{E}^{(2)}(\bm{x})$ are stored in HTD format at grid points, the RKEI scheme requires values at off-grid locations during characteristic tracing.  
We evaluate these by applying the same $P^2$ polynomial reconstruction procedure described in \cref{sec:poly_rec}.
\end{remark}

\subsection{Complexity estimate for the SLAR method}\label{sec:SLAR_complexity}

We provide a rough upper bound on the computational complexity of the SLAR method by focusing on two key components: the number of original tensor entries accessed and the dominant cost of evaluating each entry. Assume that the tensor \(\mathcal{F} \in \mathbb{R}^{N_1 \times \cdots \times N_d}\) has uniform mode size \(N = N_1 = \cdots = N_d\) and uniform hierarchical rank \(r\). In the worst case, the HTACA compression step may request up to \(\mathcal{O}(d N r^{\lceil \log_2 d \rceil})\) entries from the input tensor. This bound accounts for all recursive sampling paths in a balanced dimension tree, assuming no early termination due to residual convergence.

Each such entry must be locally reconstructed by the SL-FD solver using stencil values from the underlying HTD, typically requiring $\mathcal{O}(d^2)$ points per evaluation. The cost of evaluating a single point from the HTD is \(\mathcal{O}(d r^3)\) in the worst case, leading to an overall per-entry evaluation cost of \(\mathcal{O}(d^3 r^3)\). Combining these estimates, the total complexity of SLAR is $\mathcal{O}(d^4 N r^{3 + \lceil \log_2 d \rceil}).$
We omit the complexity of other HTACA components, as our numerical experiments suggest that their cost is negligible compared to that of SL-FD–based entry evaluations. Deriving a complete and rigorous complexity analysis for the full HTACA procedure is nontrivial and beyond the scope of this paper.

The above estimate offers two important insights. First, the presence of the exponential factor \(r^{\lceil \log_2 d \rceil}\) indicates that HTACA may become inefficient for very high-dimensional tensors. However, for kinetic applications with \(d = 6\), the growth remains acceptable. Second, the high cost \(\mathcal{O}(d^3 r^3)\) of evaluating a single tensor entry from a local PDE solver suggests considerable potential for parallelization. Since each entry evaluation is a heavy, independent task, HTACA-based PDE solvers seem well-suited for parallel acceleration in high-performance computing environments. The algorithms proposed in this paper have been implemented in MATLAB with a serial execution model for testing and validation purposes. However, in future work, we shall consider implementations which can better exploit hardware capabilities of many-core computing platforms.

\section{Numerical tests}\label{sec:numerical_tests}

Next, we evaluate the performance of the proposed SLAR method in high-dimensional settings by applying the method to several benchmark problems, namely the Landau damping and the two-stream instability. For each test, we apply periodic boundary conditions in all spatial dimensions and truncate the velocity domain to a symmetric box \([ -v_{\max}, v_{\max} ]^{d_v}\), where \(v_{\max}\) denotes the maximum velocity cutoff. In all examples, the electric field is obtained using the FFT-based Poisson solver presented in \cref{sec:SLAR}, with a truncation tolerance fixed at $0.1\varepsilon_{\text{Base}}$, where $\varepsilon_{\text{Base}}$ is a prescribed relative tolerance for the kinetic solver. The time step size is determined by
\begin{equation}\label{eq:time_step}
\Delta t = \text{CFL}/\left(\sum_{\mu=1}^{d_x} \frac{v_{\max}}{\Delta x^{(\mu)}} + \sum_{\mu=1}^{d_v} \frac{\max_{\bm{x}} |E_{x^{(\mu)}}(\bm{x})|}{\Delta v^{(\mu)}}\right).
\end{equation}
Since SL schemes are not sensitive to time step sizes, we shall approximate each term \(\max_{\bm{x}} |E_{x^{(\mu)}}(\bm{x})|\) by \(\|\bm{E}_{x^{(\mu)}}\|/\sqrt{\prod_{\mu=1}^{d_x} N_{x^{(\mu)}}}\), where \(\bm{E}_{x^{(\mu)}}\) is the HTD approximation of \(E_{x^{(\mu)}}(\bm{x})\), and \(\|\cdot\|\) denotes the Frobenius norm of high-dimensional tensors, which can be efficiently evaluated in the HTD format~\cite{kressner2012htucker}.

The VP system possesses a number of physical invariants. To facilitate their definitions, let 
\[
    \Big\langle \cdot \Big\rangle_{\Omega_{\bm{x}}} := \int_{\Omega_{\bm{x}}} (\cdot) \,d\bm{x}, \quad
    \Big\langle \cdot \Big\rangle_{\Omega_{\bm{v}}} := \int_{\Omega_{\bm{v}}} (\cdot) \,d\bm{v}, \quad 
    \Big\langle \cdot \Big\rangle_{\Omega} := \int_{\Omega_{\bm{x}}}\int_{\Omega_{\bm{v}}} (\cdot) \,d\bm{v}\,d\bm{x}
\]
denote integration over $\Omega_{\bm{x}}$, $\Omega_{\bm{v}}$, and $\Omega_{\bm{x}}\times\Omega_{\bm{v}}$, respectively. The charge, momentum, and kinetic energy densities are then defined as the moments of $f(\bm{x}, \bm{v},t)$:
\begin{equation*}
    \rho(\bm{x},t) = \Big\langle f \Big\rangle_{\Omega_{\bm{v}}}, \quad 
    \bm{J}(\bm{x},t) = \Big\langle \bm{v} f \Big\rangle_{\Omega_{\bm{v}}}, \quad 
    \kappa(\bm{x},t) = \Big\langle \tfrac{1}{2}\left\lvert \bm{v} \right\rvert^{2} f \Big\rangle_{\Omega_{\bm{v}}}.
\end{equation*}
We additionally define the electrostatic energy density as $e(\bm{x},t) = \tfrac{1}{2}\left\lvert \bm{E}(\bm{x},t) \right\rvert^{2}$. On periodic domains, the VP system preserves the total charge $\langle \rho \rangle_{\Omega_{\bm{x}}}$, total momentum $\langle \bm{J}\rangle_{\Omega_{\bm{x}}}$, and total energy $\langle \kappa + e\rangle_{\Omega_{\bm{x}}}$. While the SLAR method proposed in this work does not explicitly enforce the conservation of these quantities, we monitor their evolution to assess the quality of the numerical solution.

\begin{example}[Landau damping]
Consider the VP system in 2D2V and 3D3V phase space with initial condition
$$f(\bm{x},\bm{v},0) 
    = \frac{1}{(2\pi)^{d_x/2}}
      \left(1+\alpha\sum_{\mu=1}^{d_x}\cos(k x^{(\mu)})\right)
      \exp\!\left(-\frac{|\bm{v}|^2}{2}\right),
$$
with $k=0.5$. We test weak damping with $\alpha=0.01$, and strong damping with $\alpha=0.5$ (2D2V) or $\alpha=1/3$ (3D3V), the latter chosen to avoid negative initial distributions.

\Cref{fig:SLD_tree_comp} compares the rank histories of the SLAR method for two mode orderings in the dimension tree (\Cref{fig:balanced_tree_4d}): $\{x, y, v_x, v_y\}$ (left) and $\{x, v_x, y, v_y\}$ (right). 
In the $\{x, y, v_x, v_y\}$ ordering, the two non-leaf ranks $r_{12}$ and $r_{34}$ grow rapidly, eventually exceeding $100$, indicating a substantial increase in computational complexity. 
In contrast, the $\{x, v_x, y, v_y\}$ ordering maintains all ranks at significantly lower levels throughout the simulation. 
This observation motivates the use of mode orderings that group each spatial coordinate with its corresponding velocity component (“same-dimension pairing”), as in $\{x, v_x, y, v_y\}$. 
Following this result, we adopt such pairings in all subsequent tests. 
For the 3D3V case, we adopt the unbalanced dimension tree (\Cref{fig:unbalanced_tree_6d}) to avoid splitting a spatial–velocity pair at the root node.

\begin{figure}[htb]
    \centering
    \subfigure{
        \includegraphics[width=0.325\linewidth]{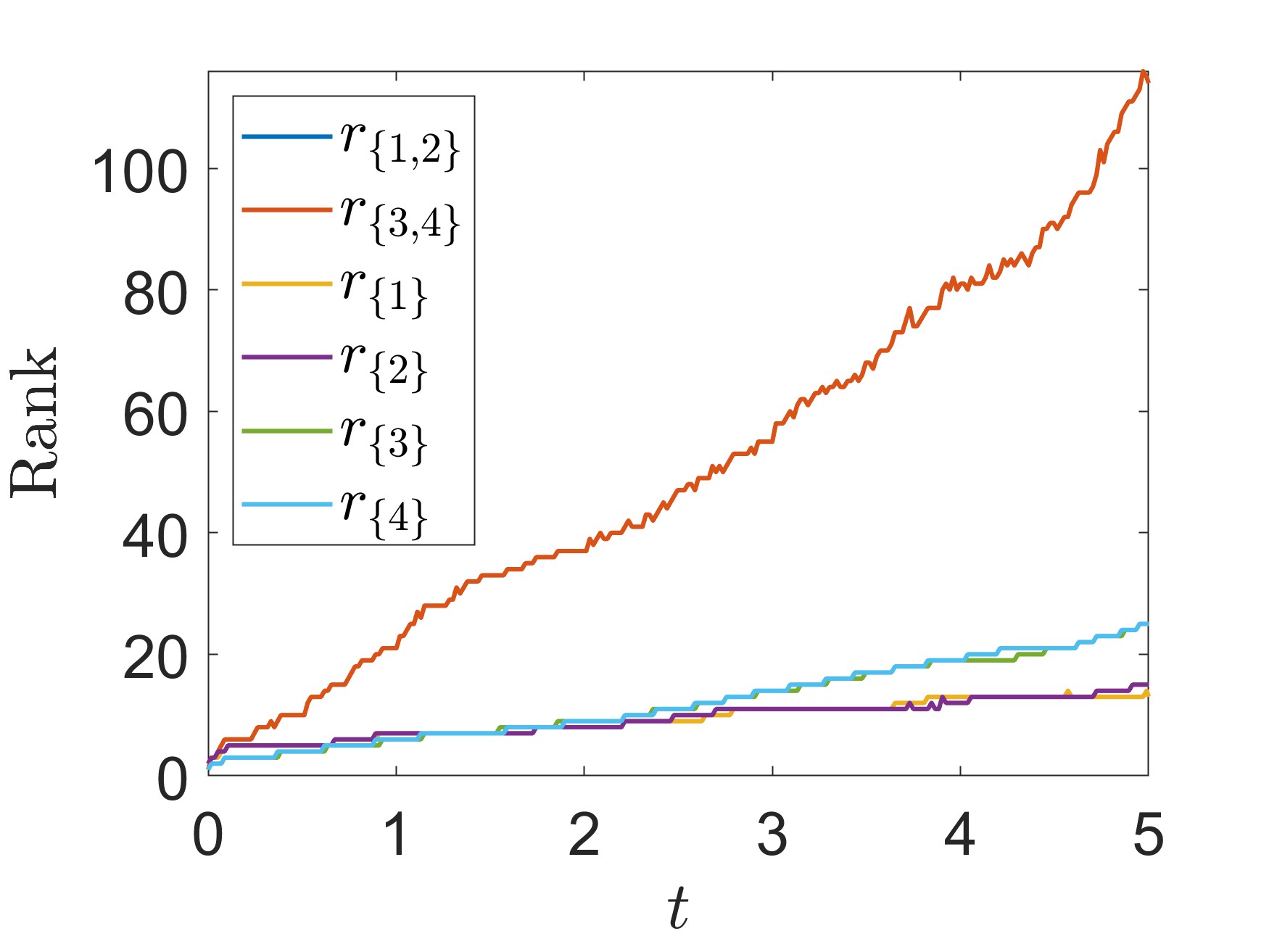}
    }
    \subfigure{
        \includegraphics[width=0.325\linewidth]{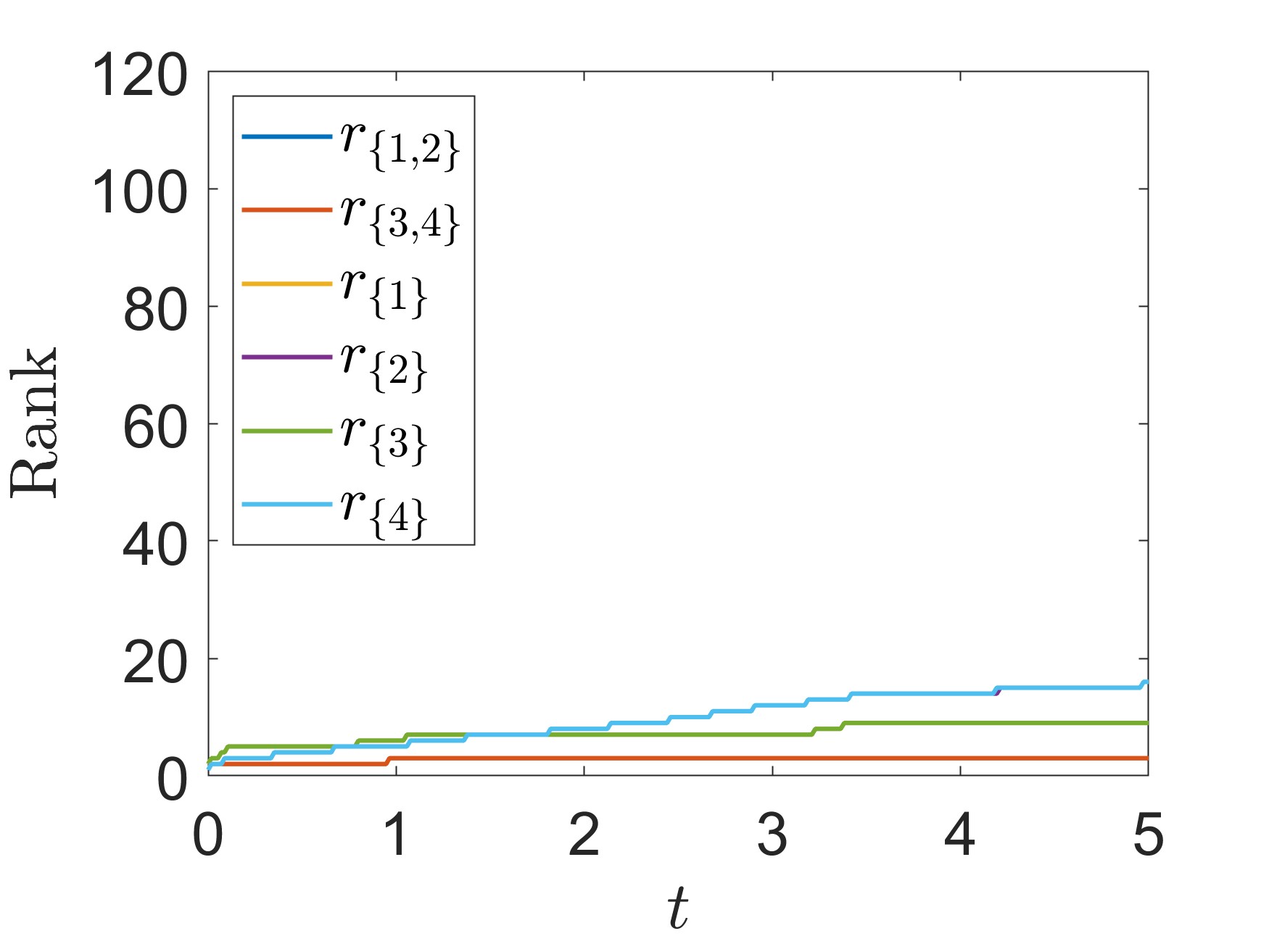}
    }
    \caption{(2D2V strong Landau damping). Rank history of the SLAR method with the dimension tree in \Cref{fig:balanced_tree_4d}: Left $-$ modes ordered as \{$x$, $y$, $v_x$, $v_y$\}; Right $-$ modes ordered as \{$x$, $v_x$, $y$, $v_y$\}. Simulation settings: $v_{\max}=2\pi$, mesh $256^4$, CFL $=5$, $\epsilon_{\text{Base}} = 10^{-3}$, and no rank limitations.}
    \label{fig:SLD_tree_comp}
\end{figure}

\Cref{fig:SLD_accuracy} shows spatial (left) and temporal (right) refinement results for the 2D2V strong Landau damping. 
The reversibility of the VP system is used to assess accuracy by evolving to time $t = T$, reversing velocity, and evolving the solution backward in time until $t = 0$. Ideally, the solution should return to the initial condition, which can be used as a reference solution.
As shown in the figure, third-order accuracy is observed in both space and time, confirming \Cref{prop:P2_analysis}. For the spatial refinement, different truncation tolerances are employed for different meshes. 
This adjustment is necessary because, for coarse meshes, the discretization error exceeds the truncation tolerance, causing the tensor spectrum to decay rapidly to the discretization-error level and then flatten. 
In this regime, the so-called flat spectrum prevents HTACA from reaching the prescribed tolerance and thus leads to non-termination. This phenomenon will be examined in future work.

\begin{figure}[htb]
    \centering
    \subfigure{
        \includegraphics[width=0.325\linewidth]{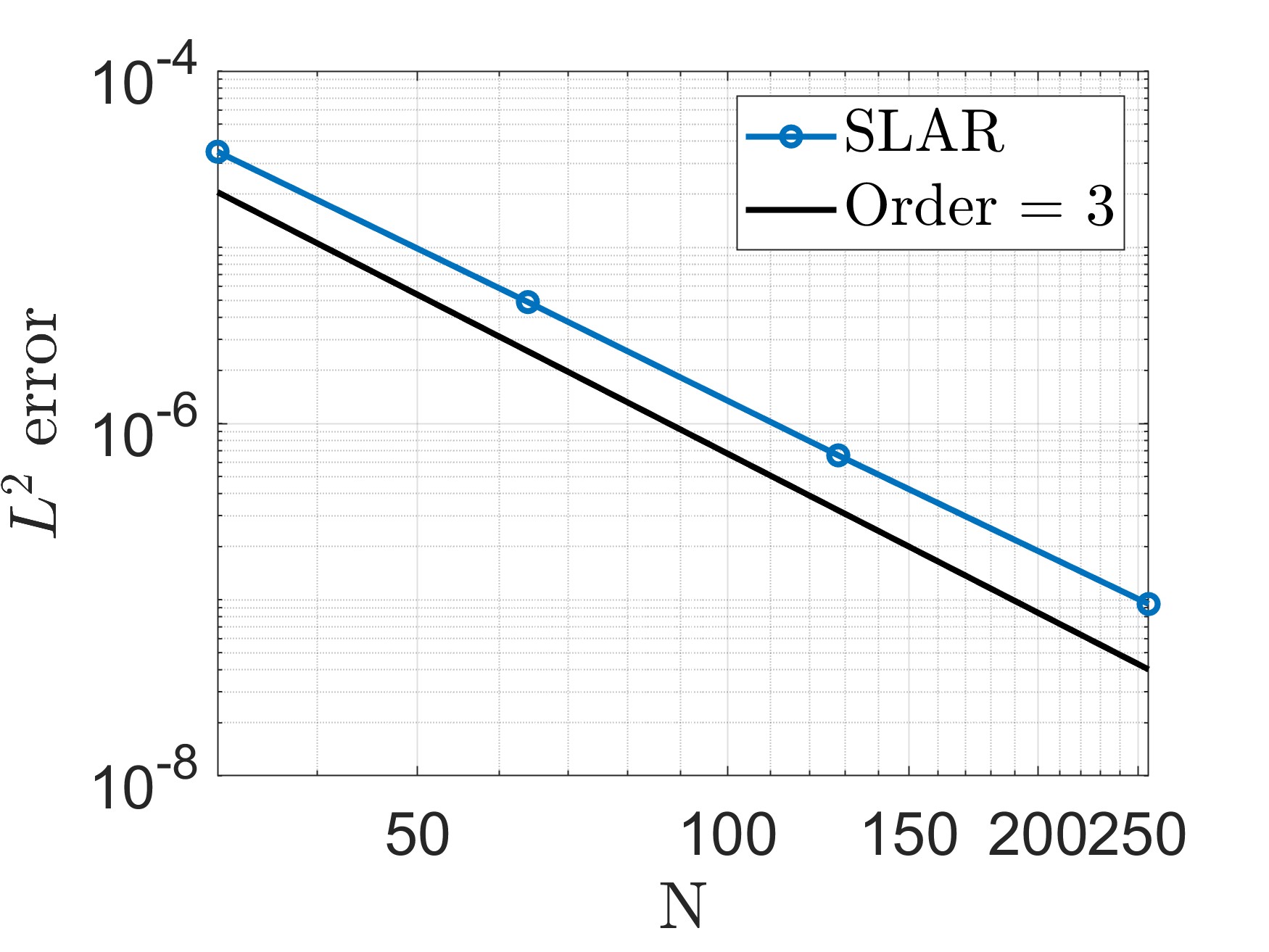}
    }
    \subfigure{
        \includegraphics[width=0.325\linewidth]{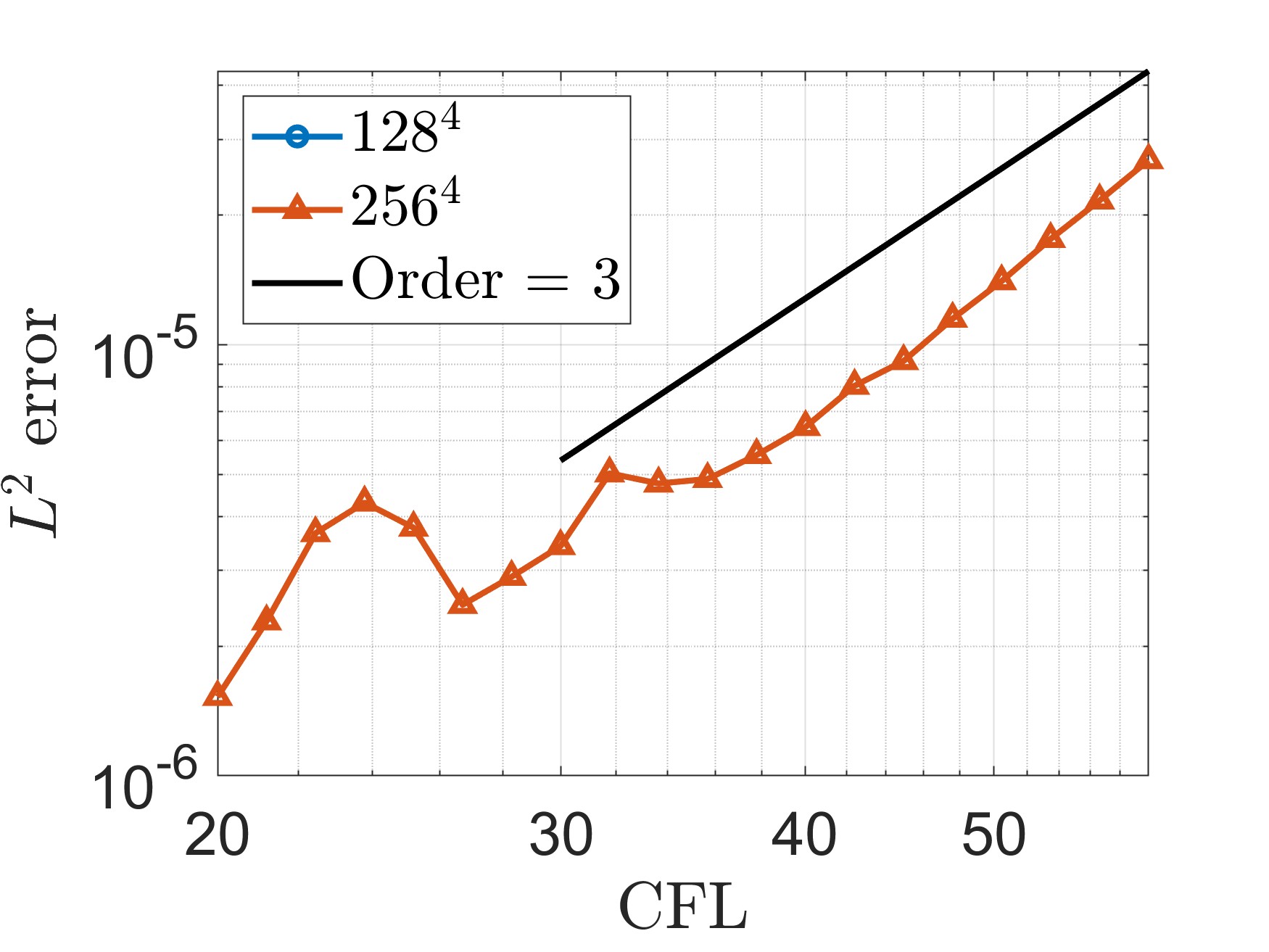}
    }
    \caption{(2D2V strong Landau damping). Left: log-log plot of grid points per dimension versus $L^2$ error at $T=0.5$ with $\mathrm{CFL}=1$ and truncation tolerances $\varepsilon_{\text{Base}}=10^{-4},10^{-5},10^{-6},10^{-7}$ for $N=32,64,128,256$, respectively. Right: log-log plot of $\mathrm{CFL}$ versus $L^2$ error at $T=0.5$ with fixed tolerance $10^{-4}$ and mesh $256^4$ for all simulations. No rank limitation is imposed in either test.}
    \label{fig:SLD_accuracy}
\end{figure}

\Cref{fig:2d2v_visualizations} presents a collection of visualizations corresponding to the strong Landau damping test case at time \( t = 35 \), illustrating the behavior of the residual-guided sampling process in a 2D2V phase space. Panel~(a) shows the contour plot of the \(\{1,2\}\)-matricization of the numerical solution (a $256^2\times 256^2$ matrix). Panels~(b) and (c) display contour plots of the first selected residual subtensors in mode sets \(\{1,2\}\) and \(\{3,4\}\), respectively, where strong localized structures are visible. Panels~(d)–(f) highlight the sampled rows and columns used in the HTACA algorithm. In particular, panel~(d) visualizes the selected index sets of the full \(\{1,2\}\)-matricization, with the red row and column indicating the first selected pivot indices. Panels~(e) and (f) depict the corresponding sampling patterns for the first selected residual subtensors in mode sets \(\{1,2\}\) and \(\{3,4\}\), respectively. The contour plots in panels~(a)--(c) illustrate the dominant coupling structures revealed by different matricizations and residual subtensors of the numerical distribution. The dominant slices \(\{1,2\}\) and \(\{3,4\}\) are accurately identified during the first recursive pivot search, as shown by the sharp features in the contour plots. The sparse selection of informative rows and columns confirms that HTACA efficiently captures low-rank structure with limited tensor access.

\begin{figure}[htb]
    \centering
    \subfigure[]{
        \includegraphics[width=0.299\linewidth]{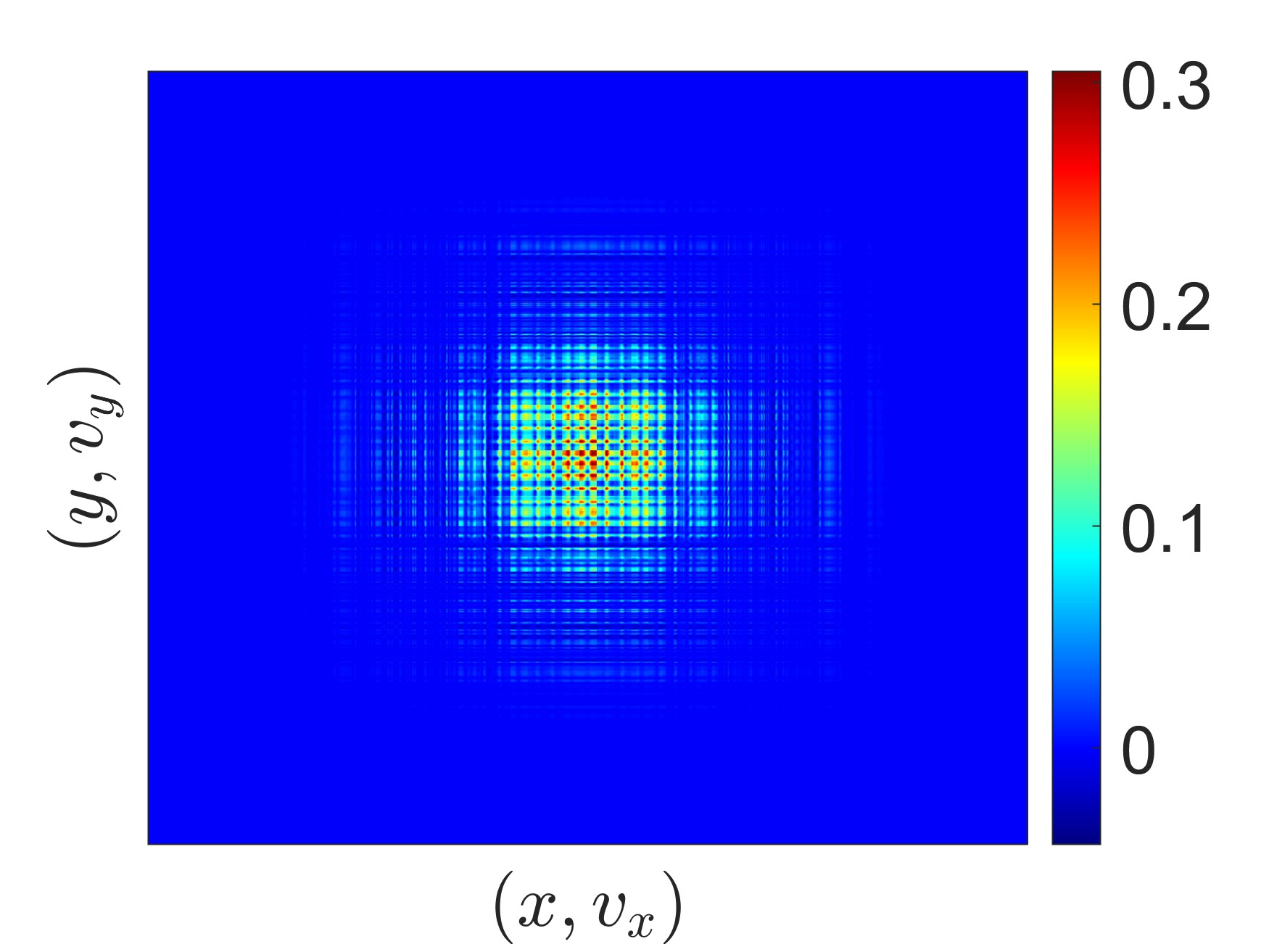}
    }
    \subfigure[]{
        \includegraphics[width=0.299\linewidth]{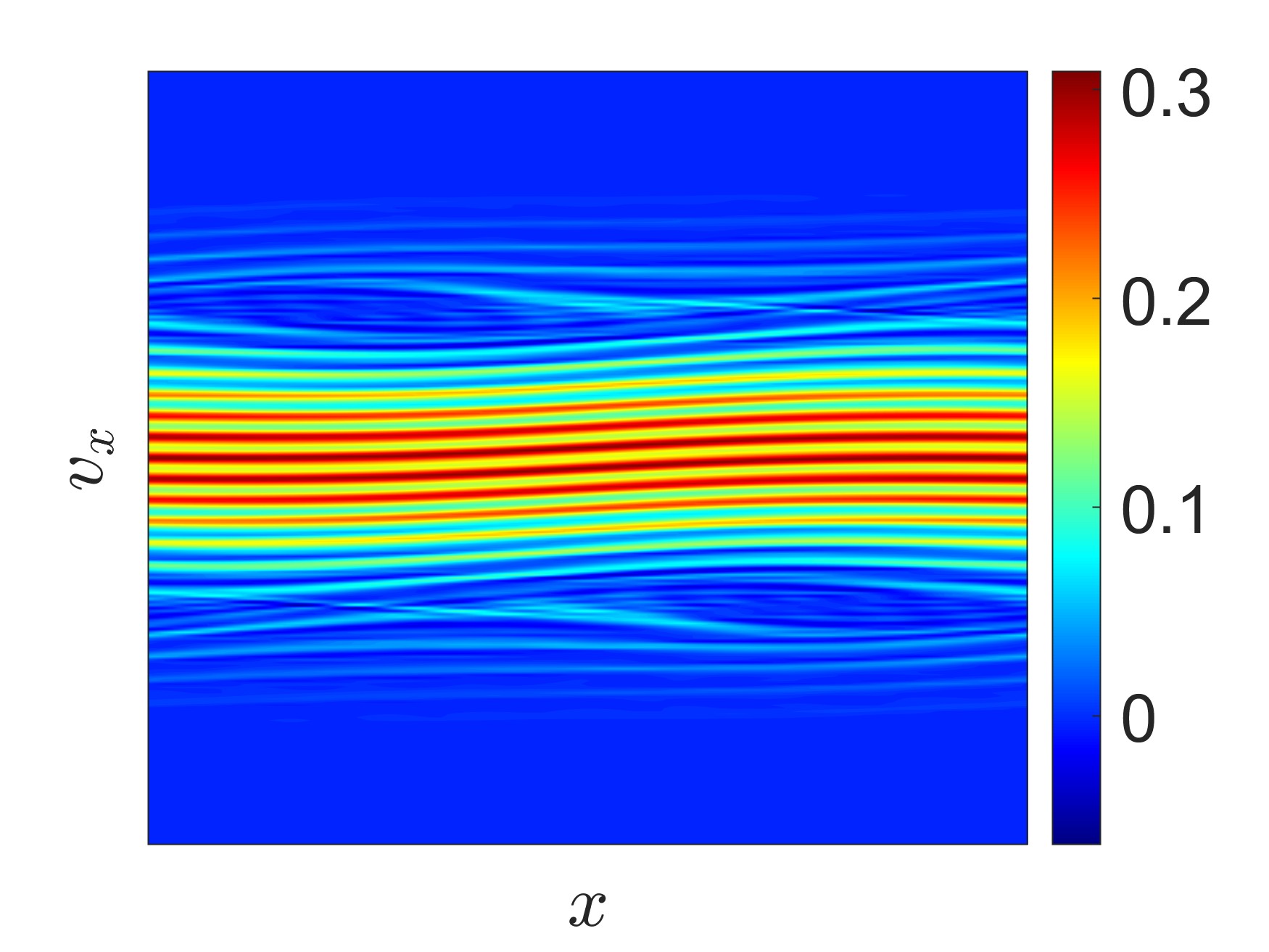}
    }
    \subfigure[]{
        \includegraphics[width=0.299\linewidth]{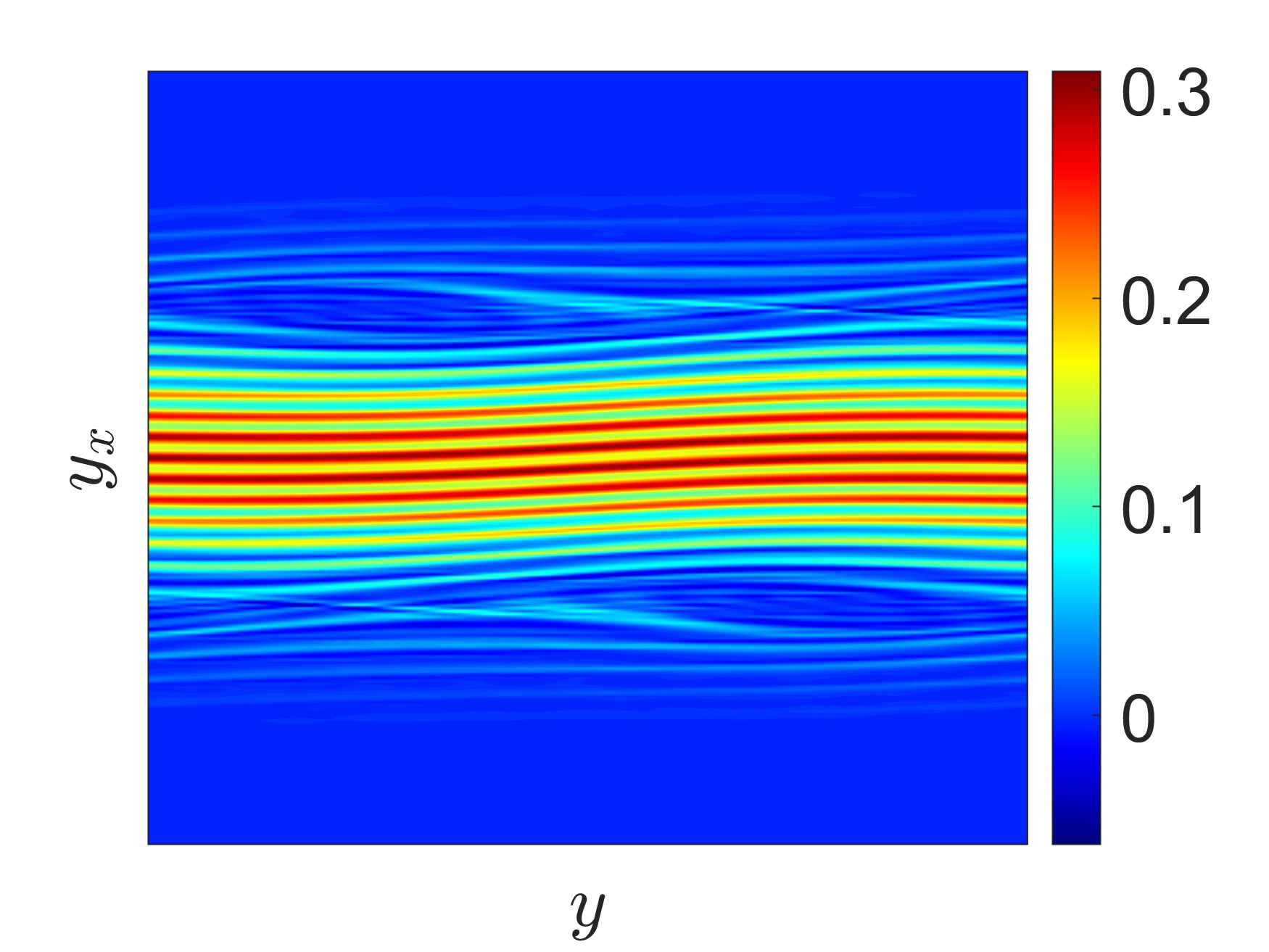}
    }

    \subfigure[]{
        \includegraphics[width=0.299\linewidth]{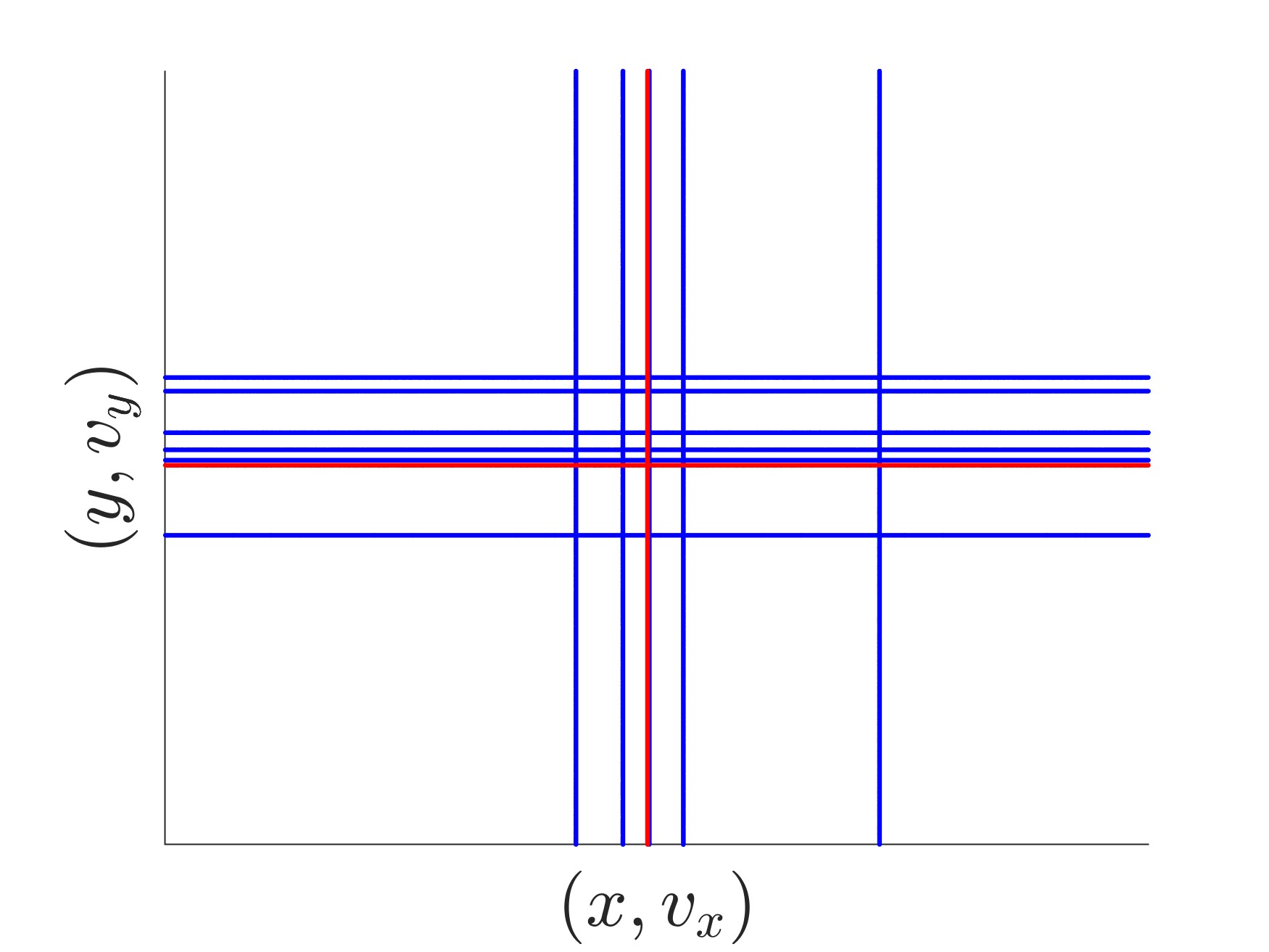}
    }
    \subfigure[]{
        \includegraphics[width=0.299\linewidth]{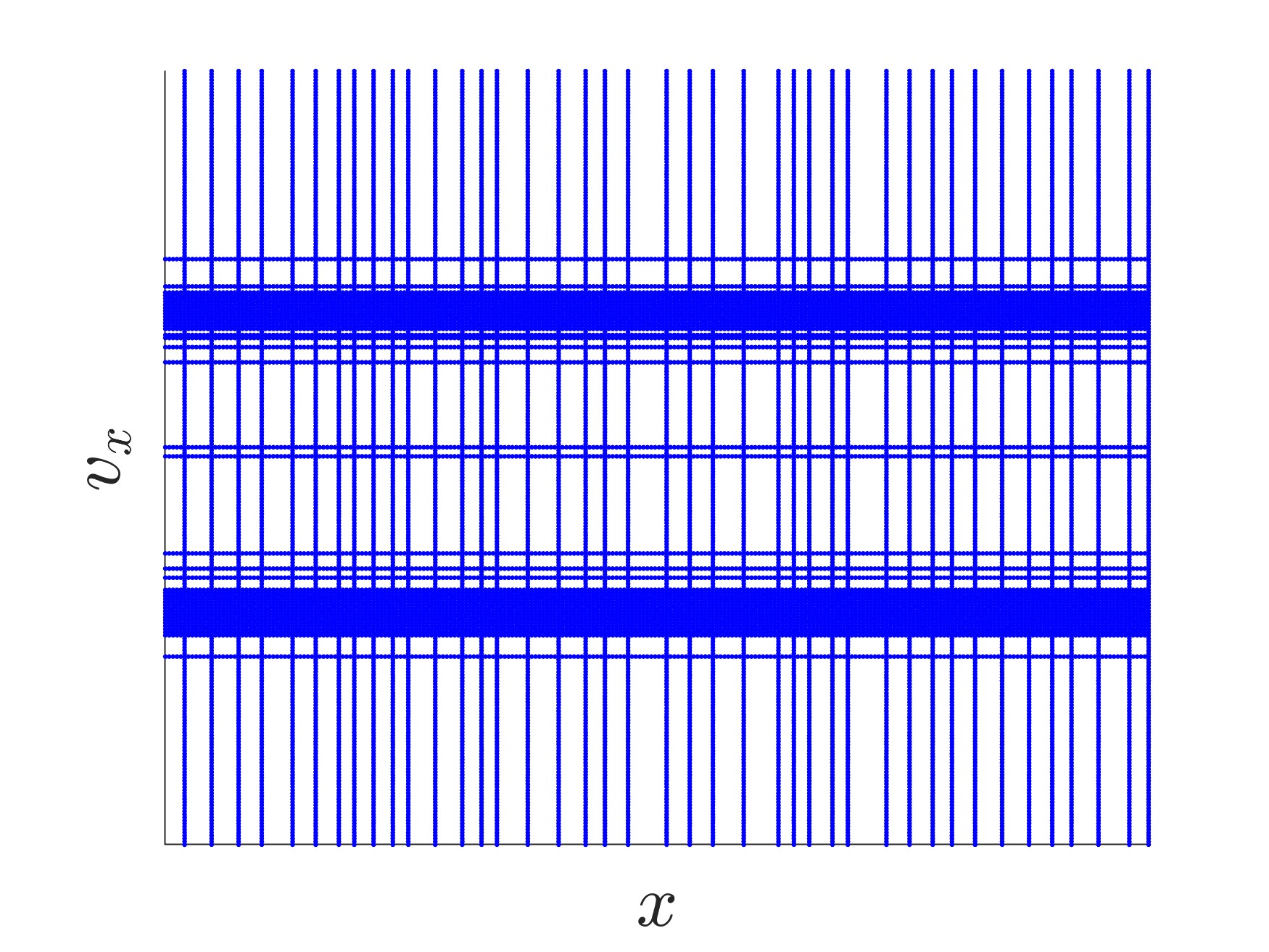}
    }
    \subfigure[]{
        \includegraphics[width=0.299\linewidth]{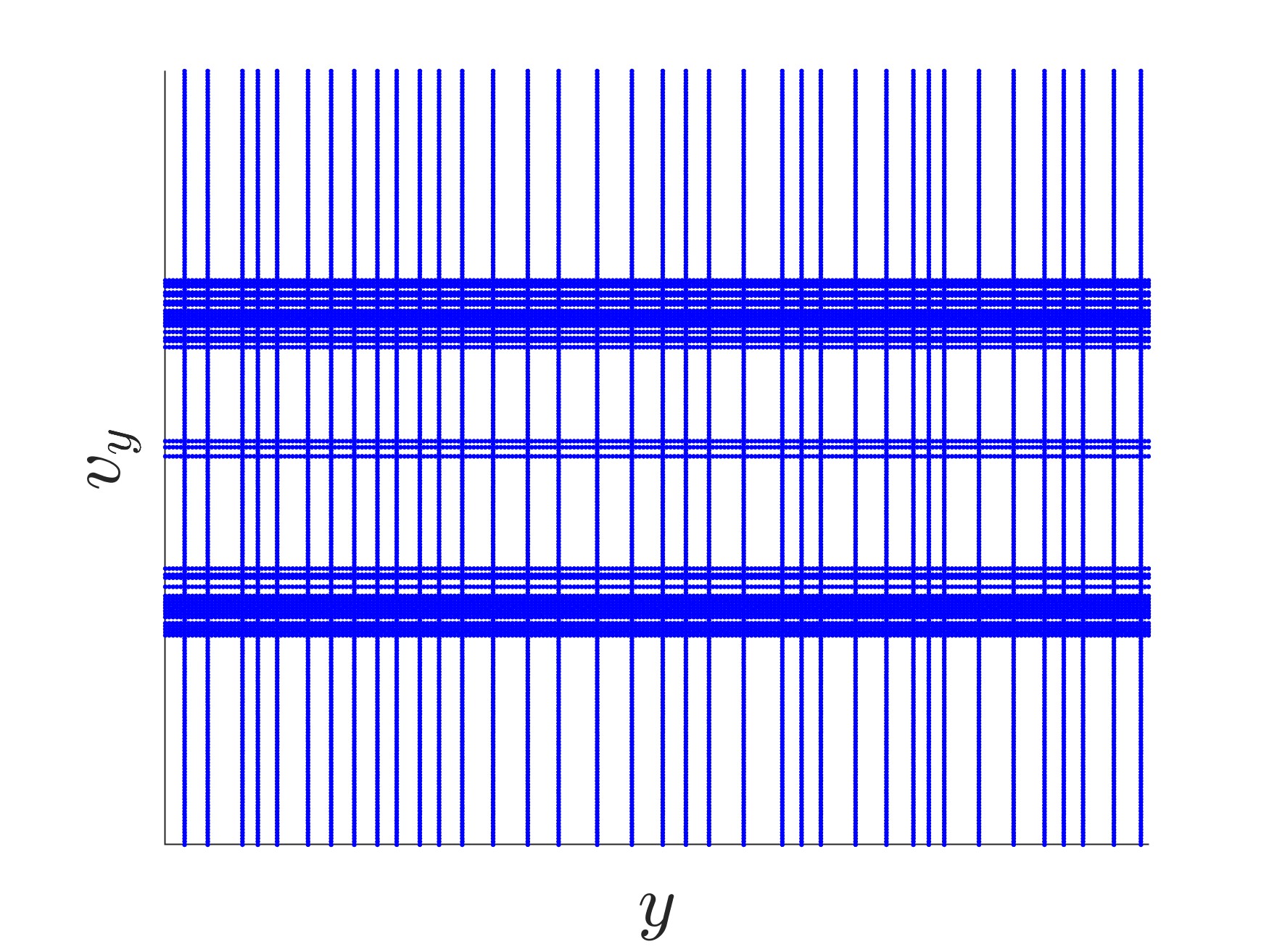}
    }
    \caption{(2D2V strong Landau damping). Contour plots (a)-(c) and selected row/column visualizations (d)-(e) of the numerical distribution at \(t = 35\).  
(a) The \(\{1,2\}\)-matricization.  
(b) The first selected \(\{1,2\}\) residual subtensor.  
(c) The first selected \(\{3,4\}\) residual subtensor.  
(d) Selected rows and columns of the \(\{1,2\}\)-matricization (the red row and column are the first selected ones).  
(e) Selected rows and columns of the first selected \(\{1,2\}\) residual subtensor.  
(f) Selected rows and columns of the first selected \(\{3,4\}\) residual subtensor.  
Simulation settings: \(v_{\max} = 2\pi\), CFL = 5, \(\varepsilon_{\text{Base}} = 5 \times 10^{-4}\), and no rank limitations.
}
    \label{fig:2d2v_visualizations}
\end{figure}

\Cref{fig:WLD_N_vs_time} illustrates the computational scaling of the SLAR method for the 3D3V weak Landau damping case using the unbalanced dimension tree in \Cref{fig:unbalanced_tree_6d}. The left panel confirms that, with fixed hierarchical ranks (leaf = 3, non-leaf = 2), the simulation time per time step grows approximately linearly with the number of grid points per dimension $N$. The right panel investigates the dependence on hierarchical ranks for a fixed grid size of $64^6$. The leaf ranks are $3\times$ the \textit{rank scaling factor}, and the non-leaf ranks are $2\times$ the factor. The measured slope of $3.63$ in the log–log plot is below the theoretical scaling exponent $3 + \lceil \log_2 6 \rceil = 6$ predicted by the complexity estimate $\mathcal{O}(d^4 N r^{\,3 + \lceil \log_2 d \rceil})$. This is likely influenced by MATLAB’s built-in vectorization optimizations as observed in \cite{sands2025transport}. A detailed investigation of this effect is nontrivial and is left for future work.

\begin{figure}[htb]
    \centering
    \subfigure{
        \includegraphics[width=0.325\linewidth]{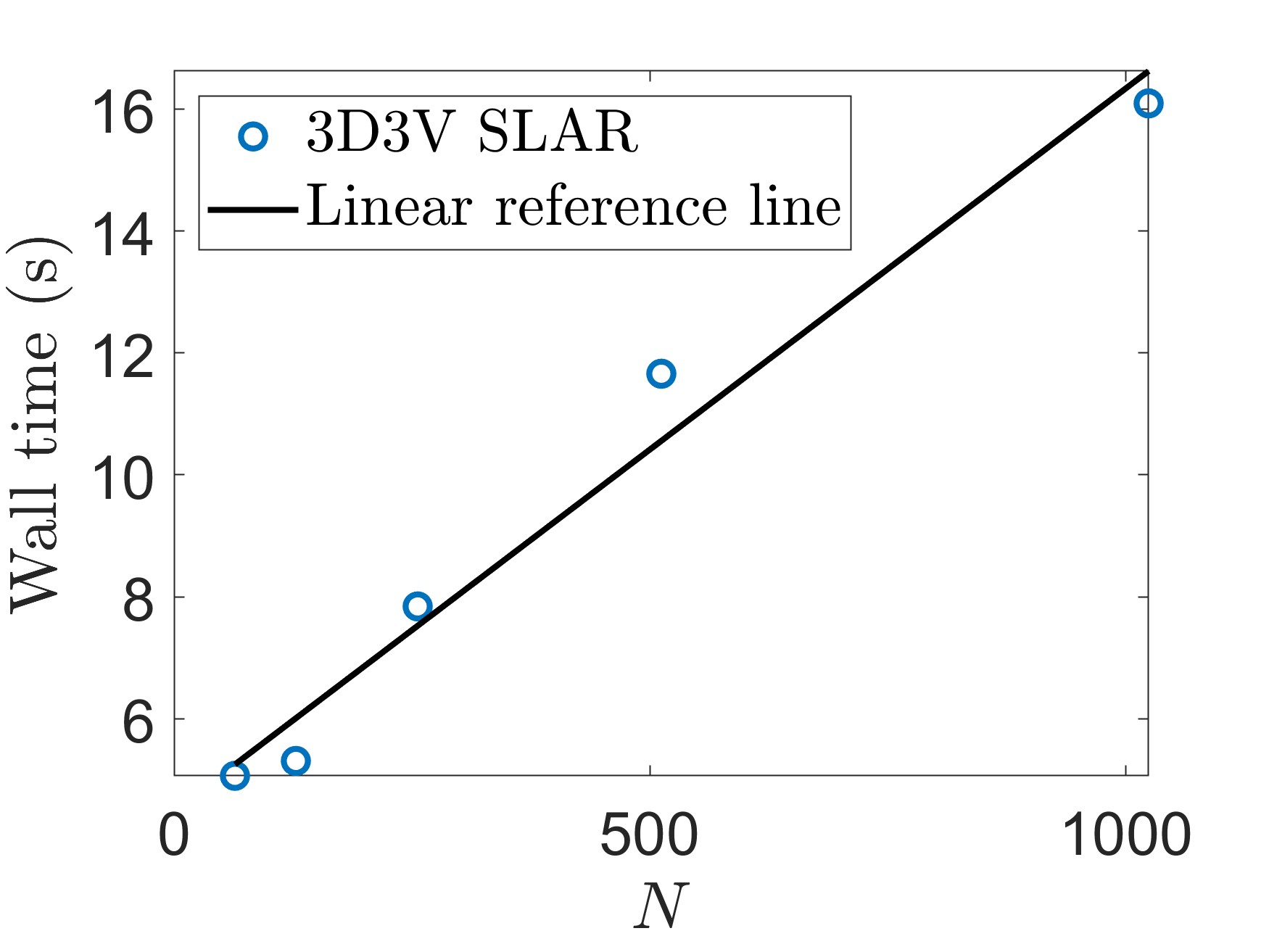}
    }
    \subfigure{
        \includegraphics[width=0.325\linewidth]{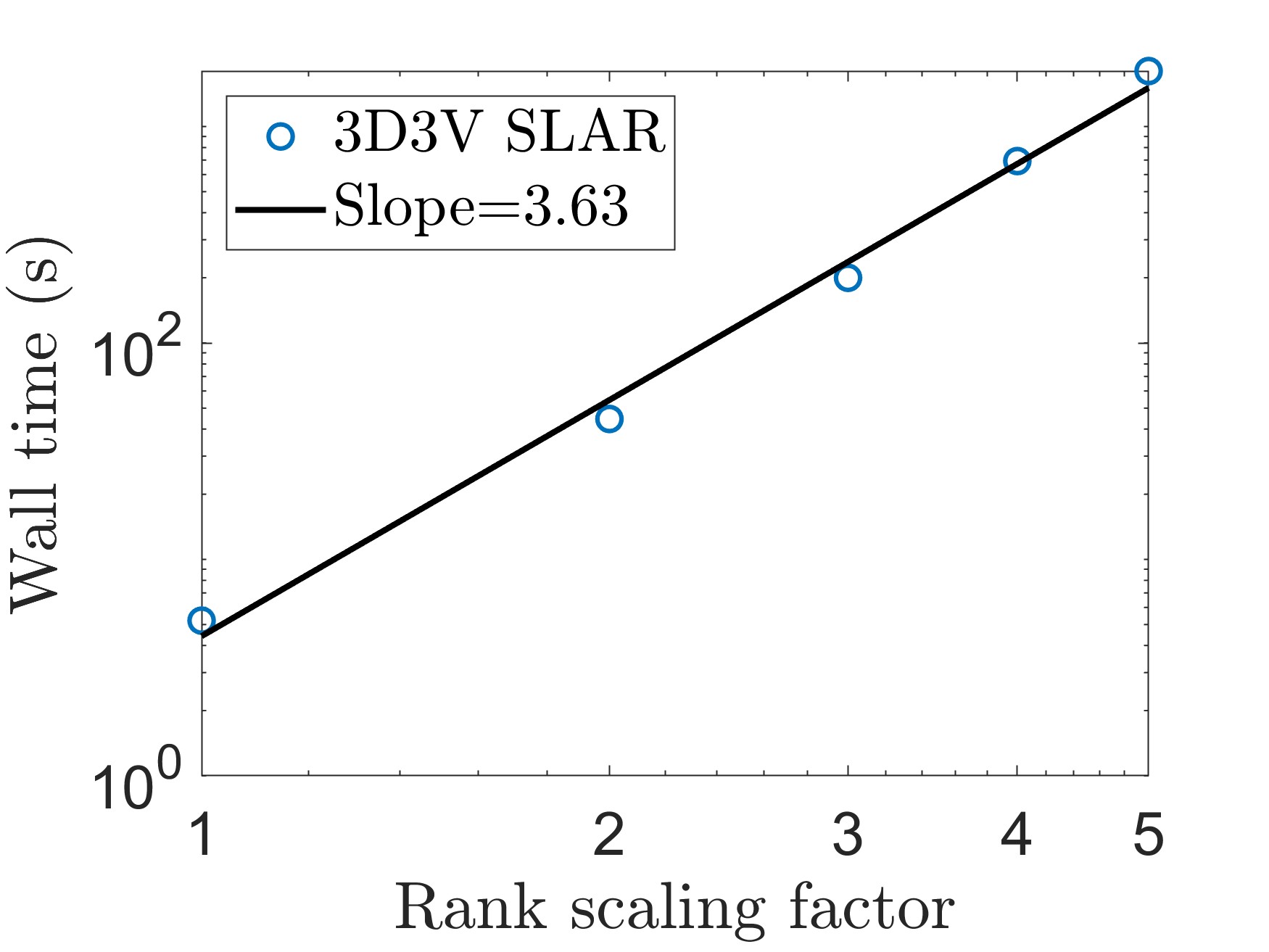}
    }
    \caption{(3D3V weak Landau damping). Left: grid points per dimension versus simulation time per time step, with a fixed hierarchical rank (leaf ranks = 3, non-leaf ranks = 2) and $N \in \{64, 128, 256, 512, 1024\}$. Right: log–log plot of rank scaling factor versus simulation time per time step, with a fixed grid size of $64^6$. Leaf ranks are $3\times$ the rank scaling factor, and non-leaf ranks are $2\times$ the factor. Shared settings: $v_{\max} = 2\pi$, $\epsilon_{\text{Base}} = 10^{-5}$.}
    \label{fig:WLD_N_vs_time}
\end{figure}

\Cref{fig:3d3v_weak_histories} presents the simulation results for the 3D3V weak Landau damping problem. Panel~(a) shows the decay of electric energy, where all resolutions produce nearly overlapping curves throughout the simulation, indicating resolution-independent behavior for this quantity under the present settings. 
Panel~(b) reports the compression ratio, defined as the ratio between the storage requirement of the HTD representation and that of the corresponding full tensor (smaller is better). 
The compression ratio remains essentially constant over time for each resolution, with higher resolutions achieving significantly smaller ratios due to the curse of dimensionality. 
Panels~(c)--(e) display the relative deviation of mass, the deviation of momentum in $v_x$, and the relative deviation of energy, respectively. 
It should be noted that the present SLAR method does not enforce the conservation of any physical invariants, so the deviations observed in panels~(c)--(e) are purely numerical artifacts. 
Among these, mass and energy deviations remain at the order of $10^{-6}$, while momentum deviations are at the order of $10^{-4}$. In contrast to full-rank solvers, higher spatial resolution does not necessarily improve conservation here. This is because finer grids imply smaller grid spacing, and therefore, according to the time-step formula in~\eqref{eq:time_step}, smaller time steps. When the accuracy of the SLAR method is controlled by a fixed truncation tolerance~$\varepsilon_{\text{Base}}$, a larger number of time steps leads to greater cumulative truncation errors, making conservation harder to maintain.
Panel (f) provides a contour plot of the distribution slice $f(x,v_x)$ at $(y, v_y, z, v_z) = (2\pi, 0, 2\pi, 0)$ at time $t = 35$. The plot highlights the predominantly smooth, symmetric structures that persist after the collective damping of the electric field. These bands correspond to free-streaming particle motion due to a weakly nonlinear interaction and can be captured efficiently by the proposed SLAR method. We also observe some fine-scale filamentation structures, which will eventually become underresolved by the mesh.

\begin{figure}[h]
    \centering
    \subfigure[]{
        \includegraphics[width=0.299\linewidth]{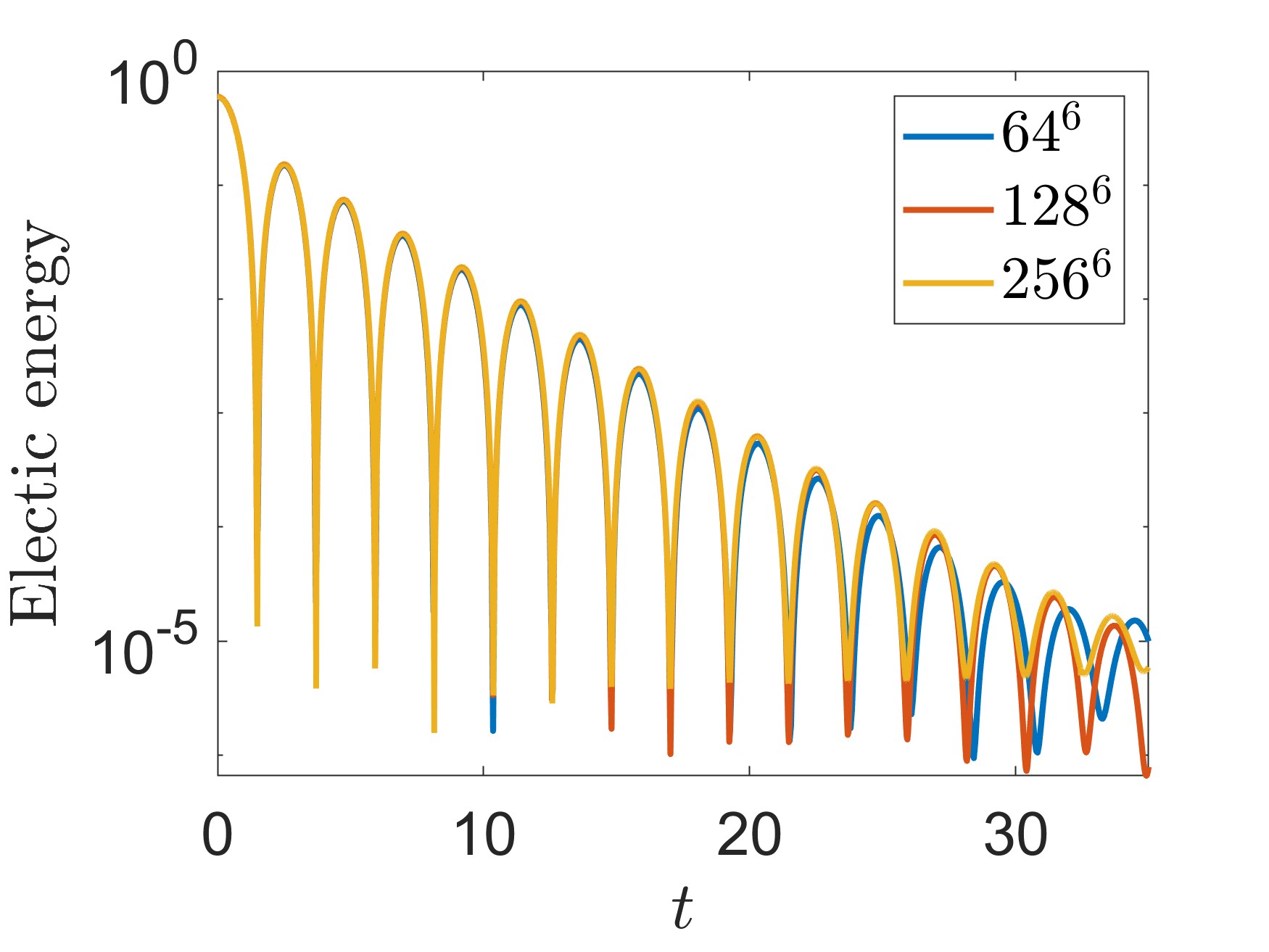}
    }
    \subfigure[]{
        \includegraphics[width=0.299\linewidth]{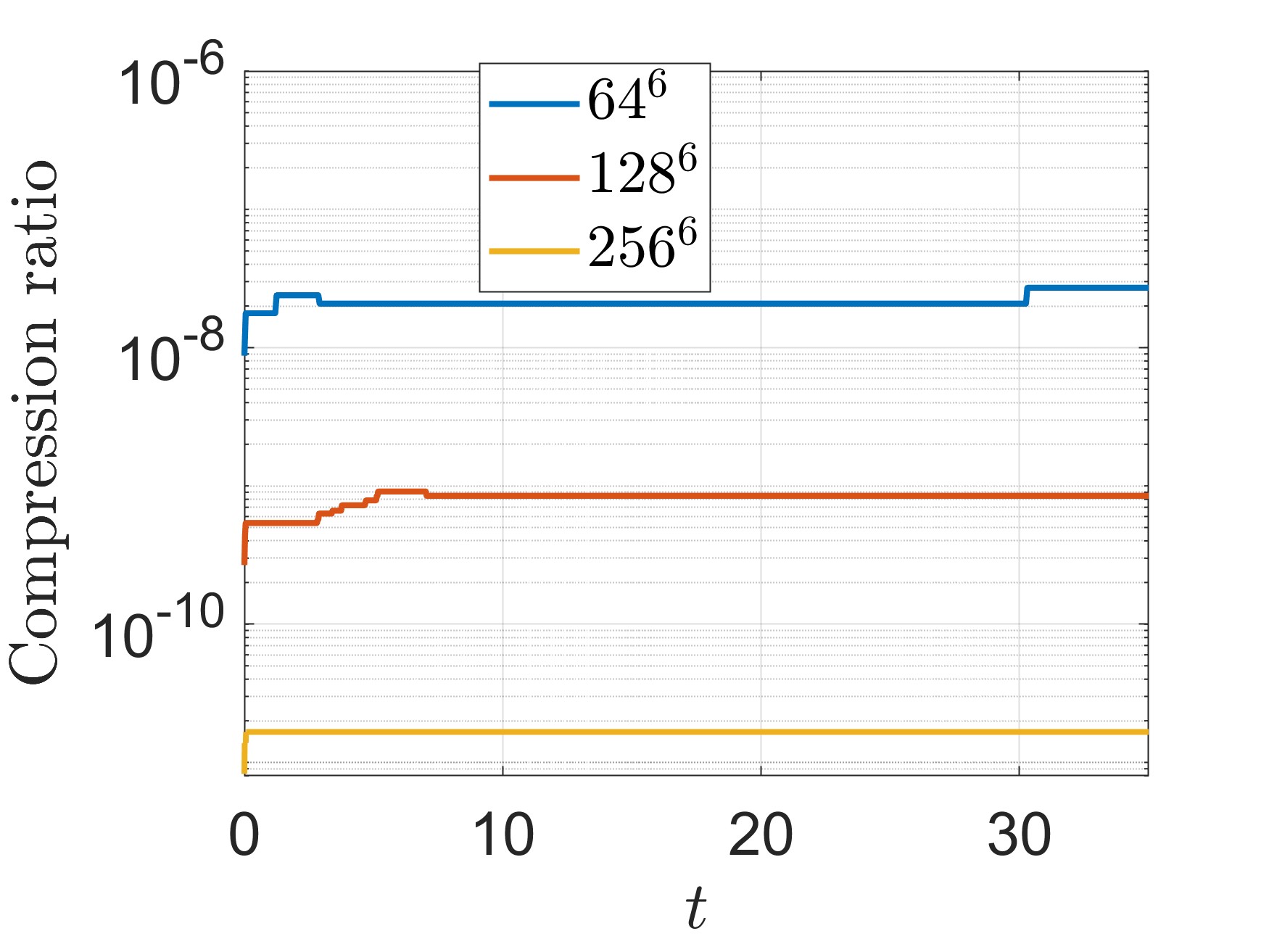}
    }
    \subfigure[]{
        \includegraphics[width=0.299\linewidth]{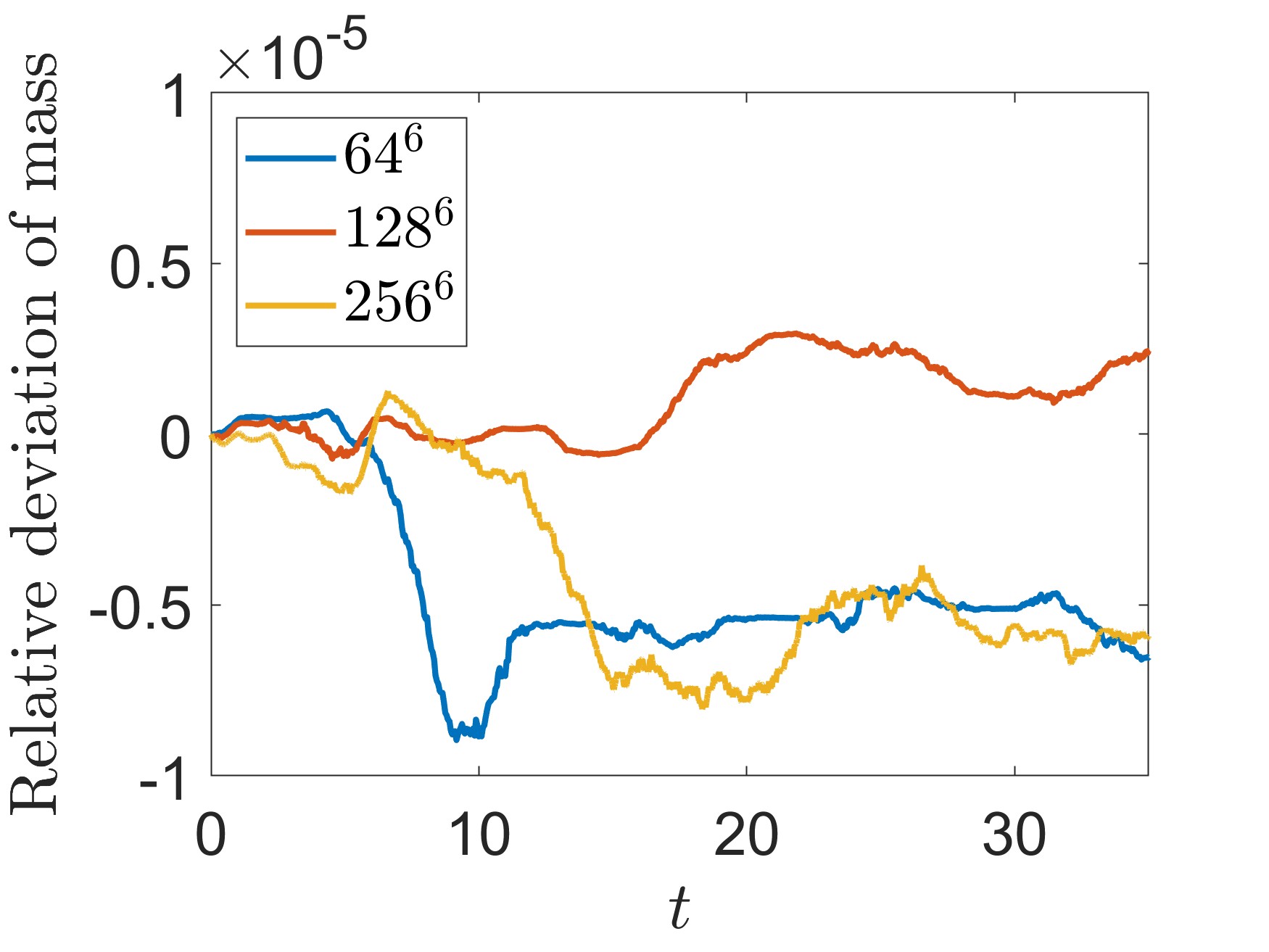}
    }

    \subfigure[]{
        \includegraphics[width=0.299\linewidth]{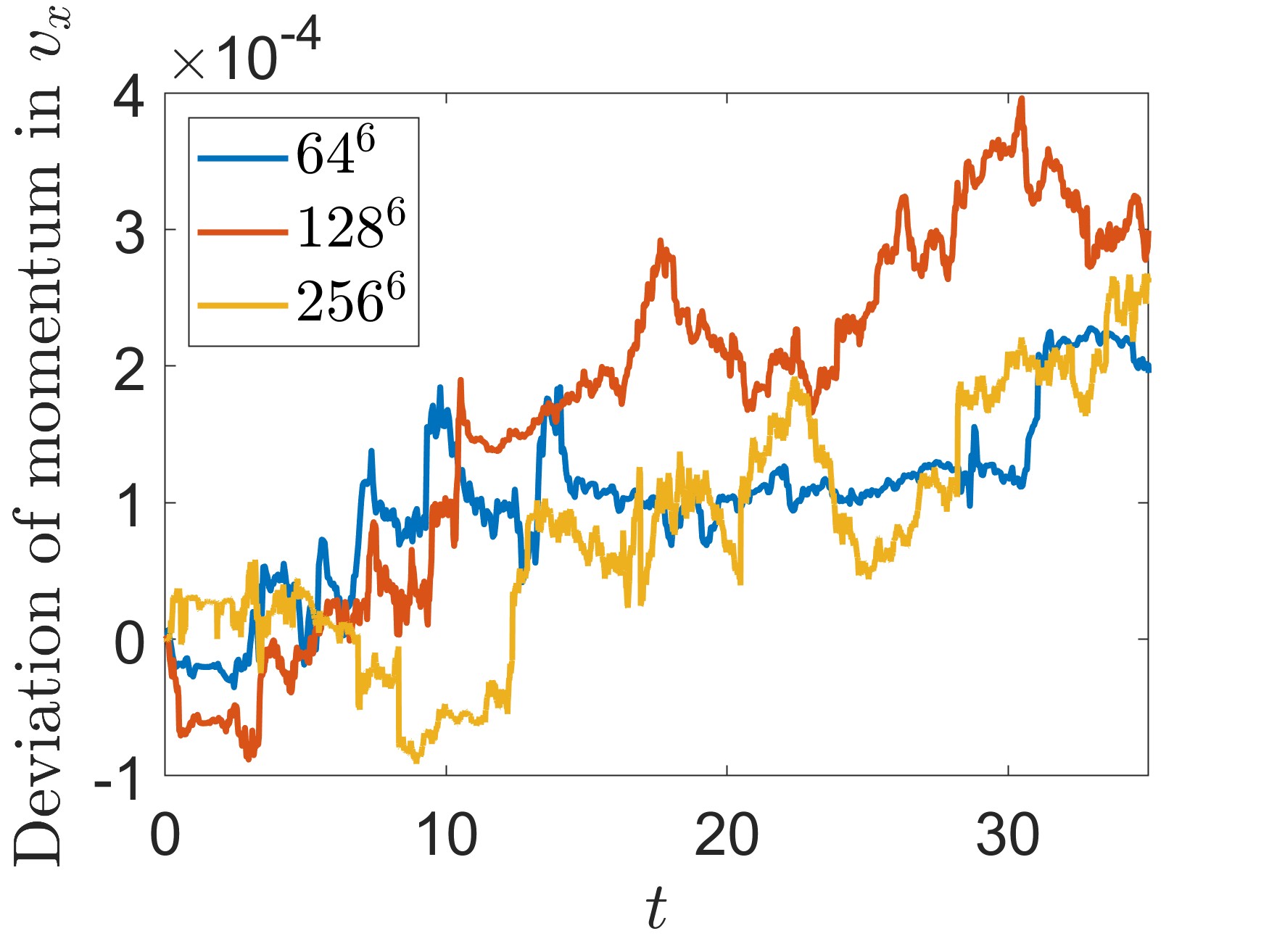}
    }
    \subfigure[]{
        \includegraphics[width=0.299\linewidth]{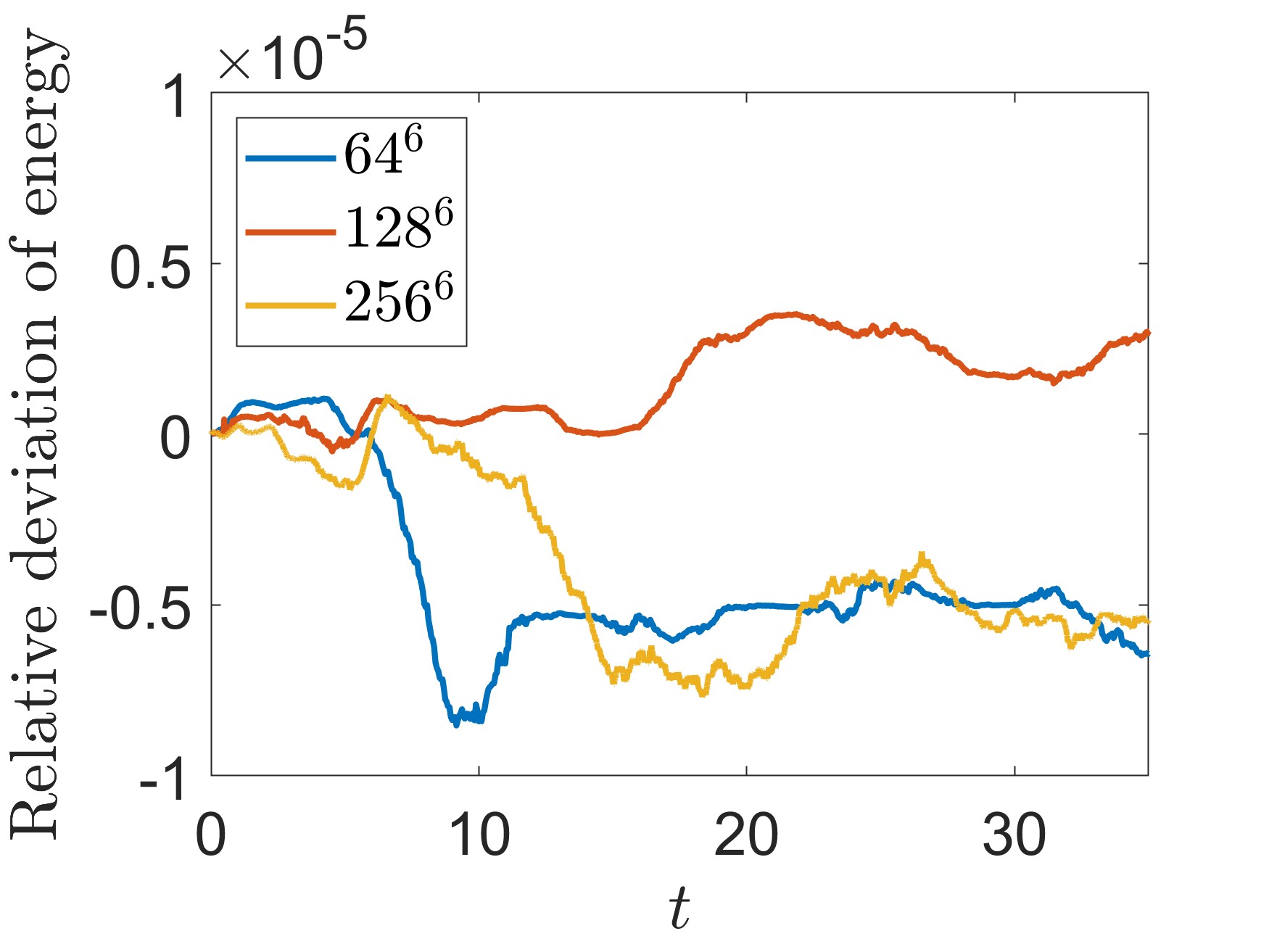}
    }
    \subfigure[]{
        \includegraphics[width=0.299\linewidth]{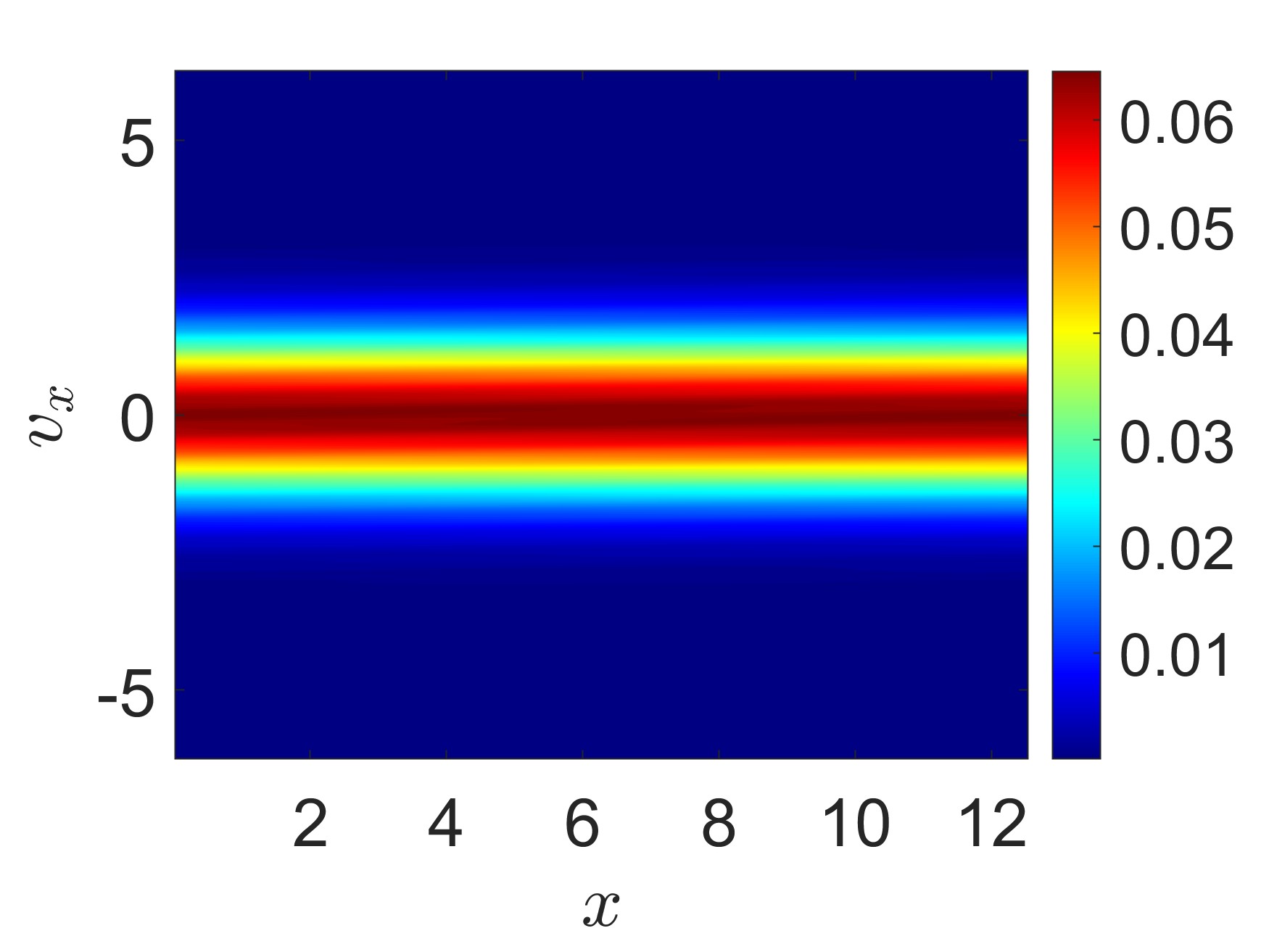}
    }
    \caption{(3D3V weak Landau damping). Selected results of the SLAR method: (a)–(e) Time histories of electric energy, compression ratio, relative deviation of mass, deviation of momentum in $v_x$, and relative deviation of energy, for resolutions $64^6$, $128^6$, and $256^6$. (f) Contour plot of $f(x,v_x)$ on the slice $(y,v_y,z,v_z)=(2\pi,0,2\pi,0)$ at $t=35$ for resolution $256^6$. Simulation settings: $v_{\text{max}}=2\pi$, CFL = 5, $\epsilon_{\text{Base}} = 10^{-5}$, and no rank limitations.}
    \label{fig:3d3v_weak_histories}
\end{figure}

Similarly, \Cref{fig:3d3v_strong_histories} presents the simulation results for the 3D3V strong Landau damping problem. 
Panel~(a) shows the temporal evolution of the electric energy. Compared with the weak damping case, the curves for different spatial resolutions start to diverge much earlier, with the coarsest $64^6$ grid showing visible deviations for $t \gtrsim 22$. 
Panel~(b) displays the compression ratio. Unlike the weak damping case, the growth of the hierarchical ranks reflects the presence of stronger nonlinear effects which generate fine-scale structures. This results in a noticeable increase in the storage cost, which, in-turn, increases the compression ratio of the scheme. 
Panels~(c)--(e) show the relative deviations of mass, the deviation of momentum in $v_x$, and the relative deviation of energy, respectively. The deviation magnitudes are substantially larger than those in the weak damping case: mass and energy deviations increase from $\mathcal{O}(10^{-6})$ to $\mathcal{O}(10^{-3})$, while momentum deviations grow from $\mathcal{O}(10^{-4})$ to $\mathcal{O}(10^{-1})$. Higher spatial resolution does not yield significant improvement in preserving these invariants, which is consistent with the observation in the weak damping case. The poor conservation performance observed here highlights a limitation of the current SLAR framework in handling strongly nonlinear problems. In future work, we will address this issue by generalizing the LoMaC framework proposed in~\cite{sands2025adaptive} to 3D3V problems.
Panel (f) provides a contour plot of the distribution slice $f(x,v_x)$ at $(y, v_y, z, v_z) = (2\pi, 0, 2\pi, 0)$ at time $t = 35$. The plot captures the fine-scale filamentation structures generated by the residual perturbation after the collective damping of the electric field. These filaments, which are more pronounced than the weak case, correspond to free-streaming particle dynamics that are continually stretched and deformed by nonlinear phase mixing and will cascade to progressively smaller scales. The resulting dynamics drive substantial rank growth, which increases the computational complexity of the adaptive-rank representation.

\begin{figure}[htb]
    \centering
    \subfigure[]{
        \includegraphics[width=0.299\linewidth]{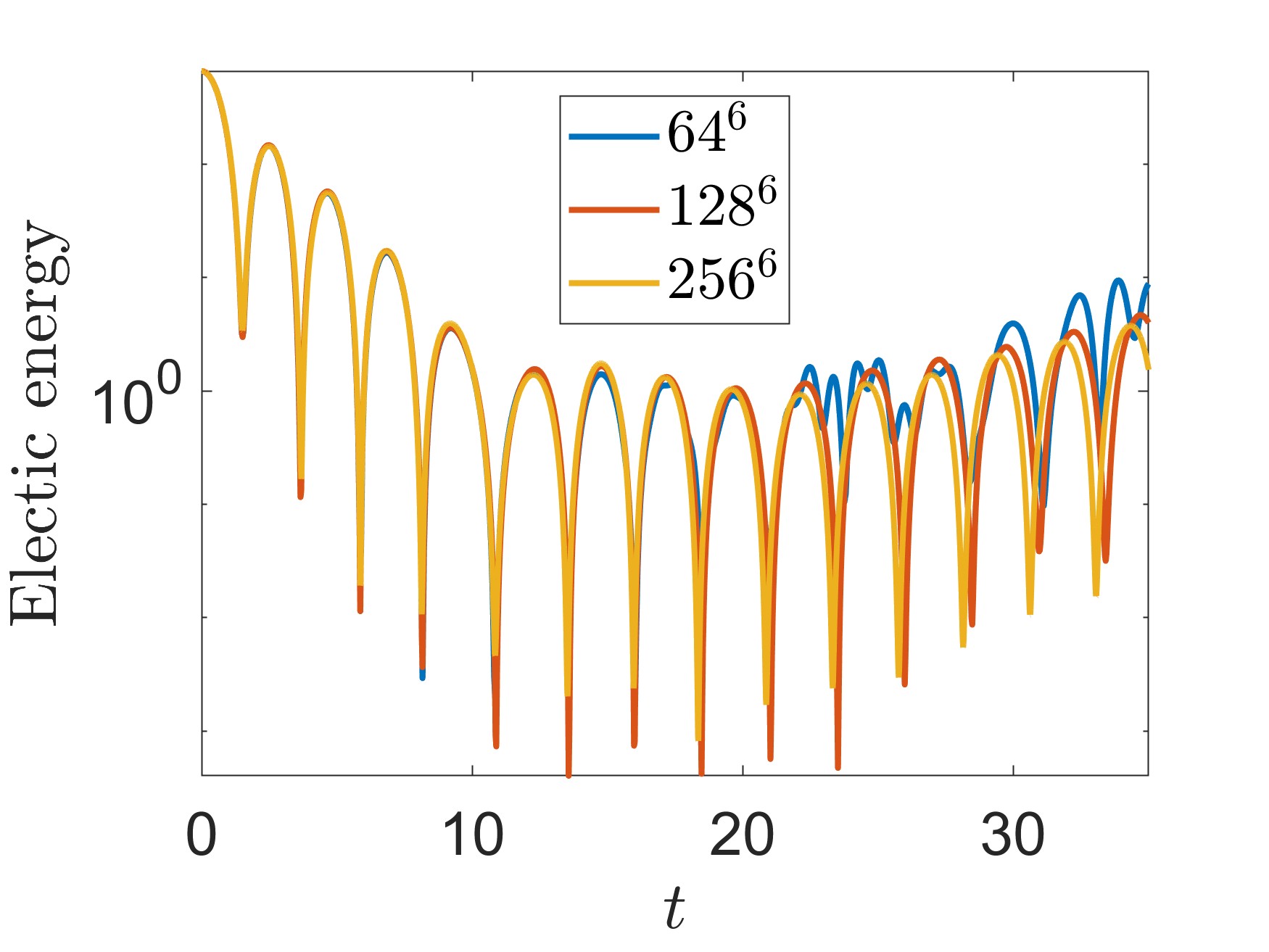}
    }
    \subfigure[]{
        \includegraphics[width=0.299\linewidth]{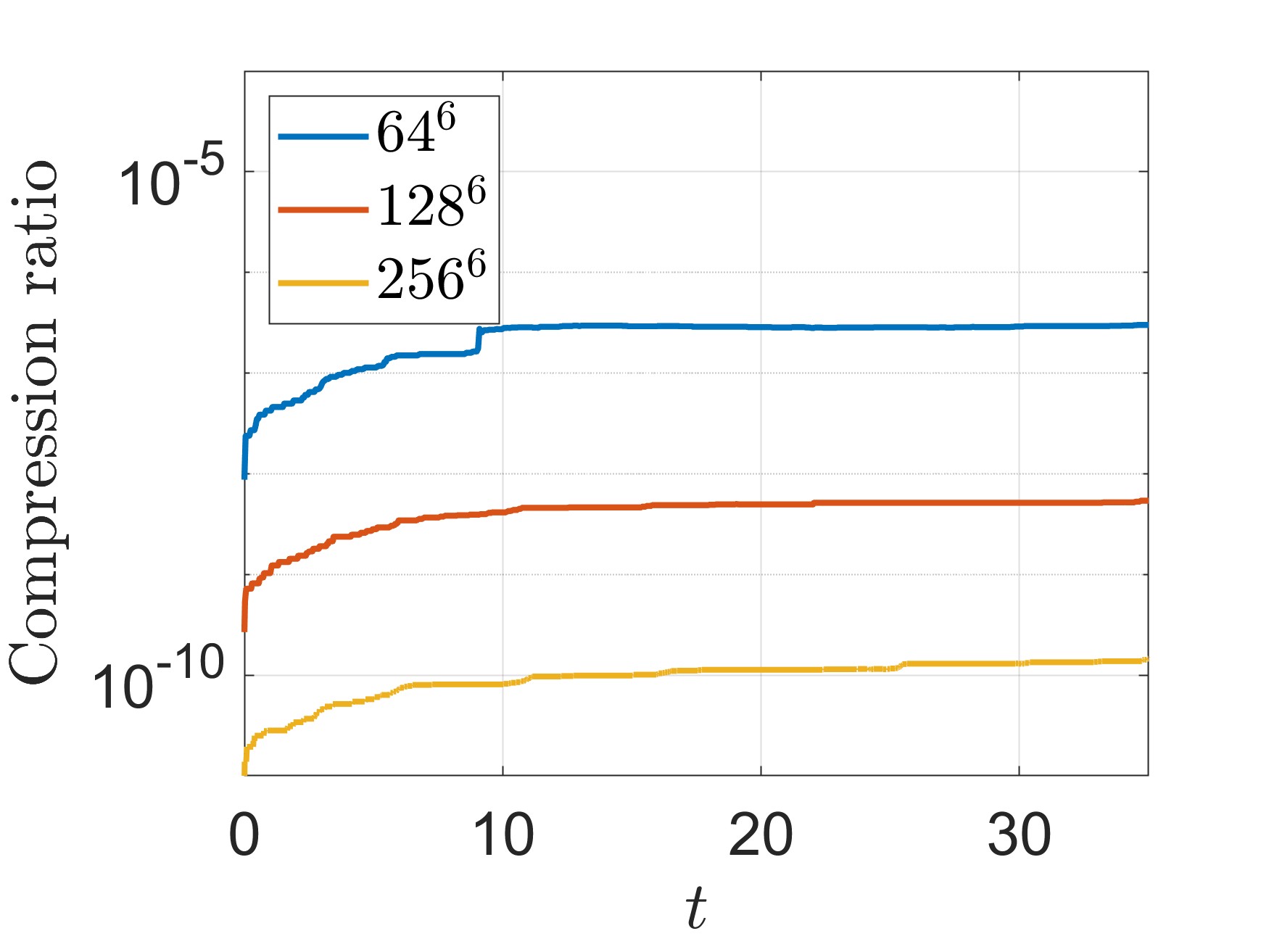}
    }
    \subfigure[]{
        \includegraphics[width=0.299\linewidth]{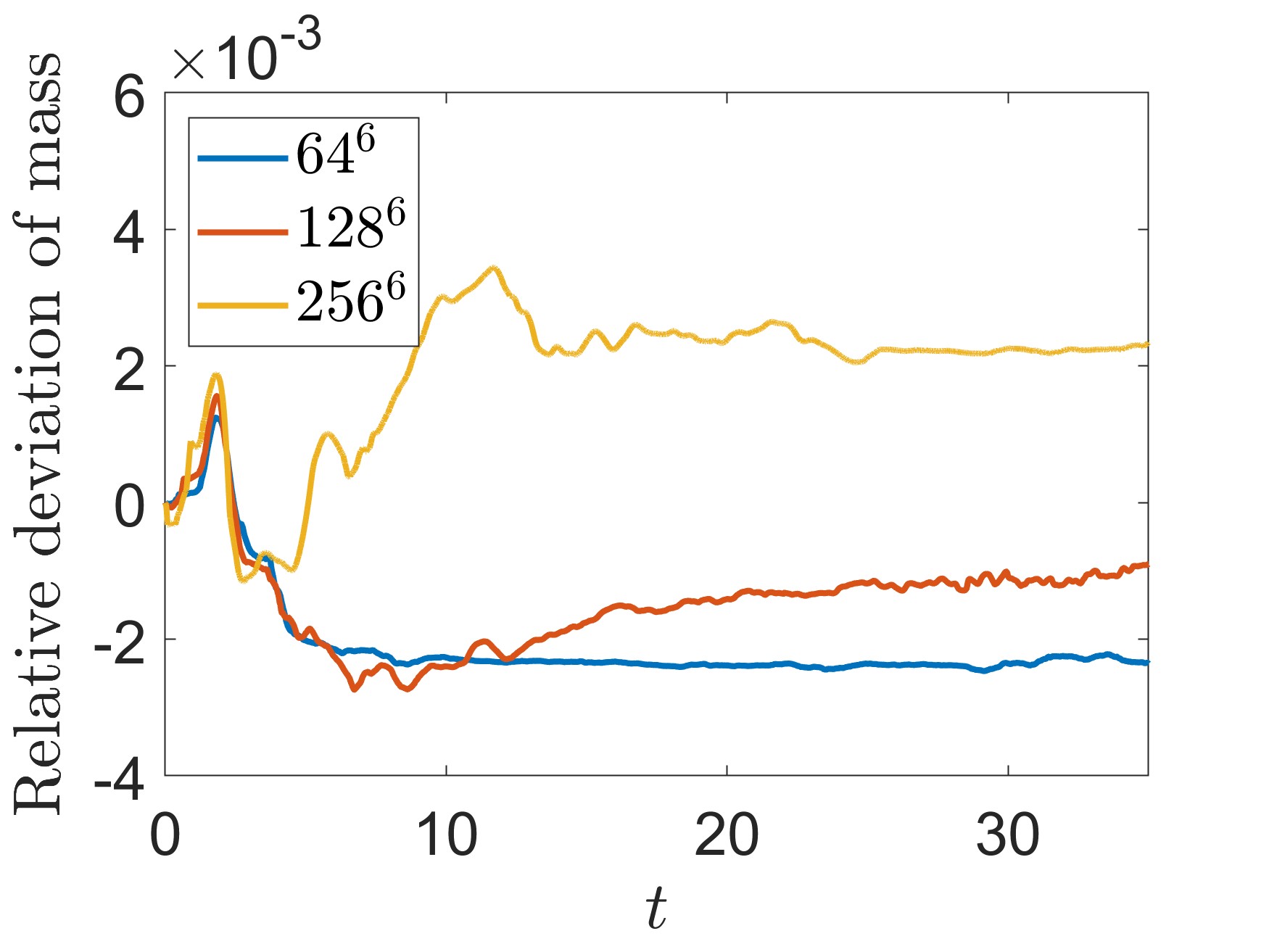}
    }

    \subfigure[]{
        \includegraphics[width=0.299\linewidth]{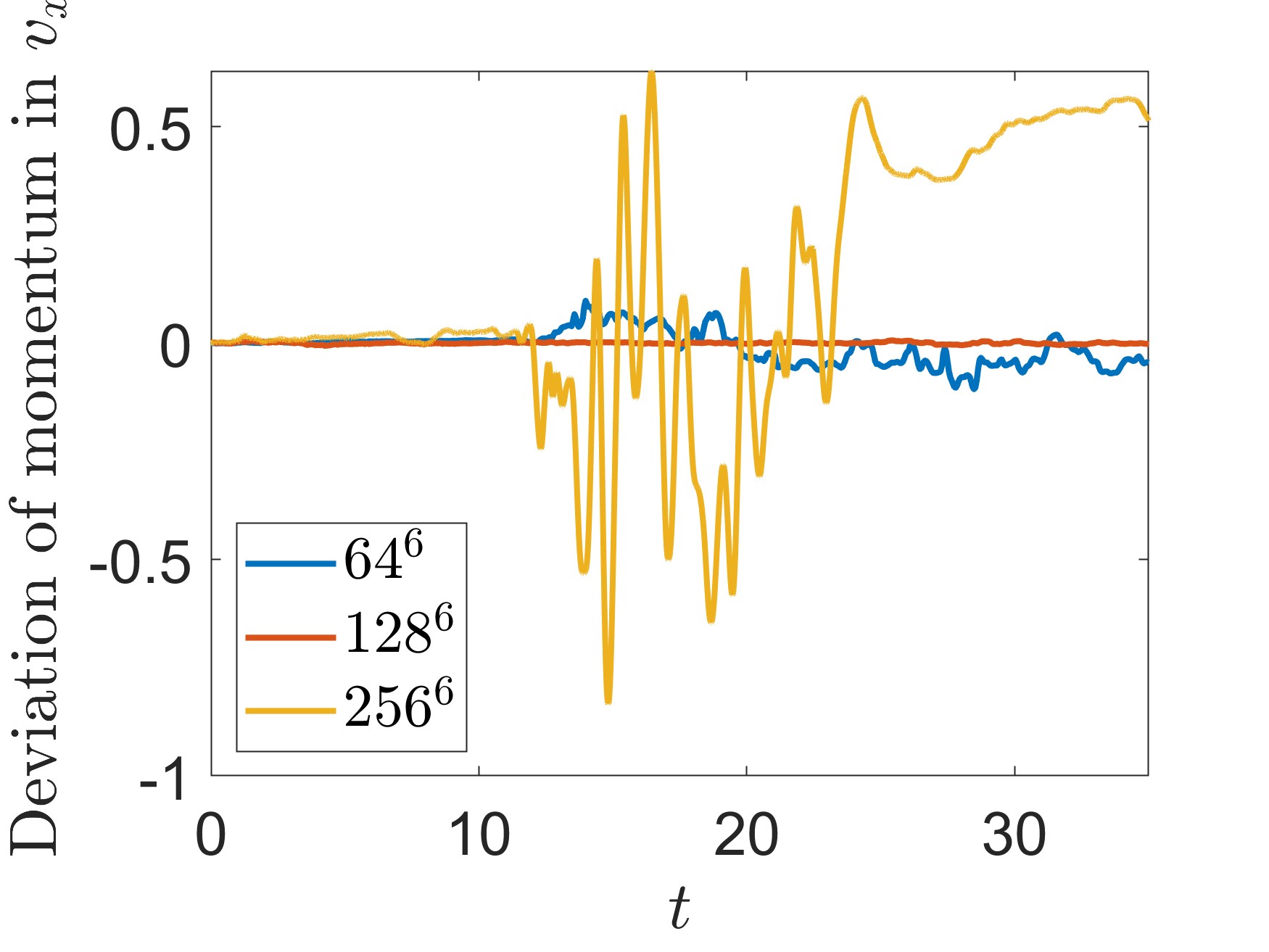}
    }
    \subfigure[]{
        \includegraphics[width=0.299\linewidth]{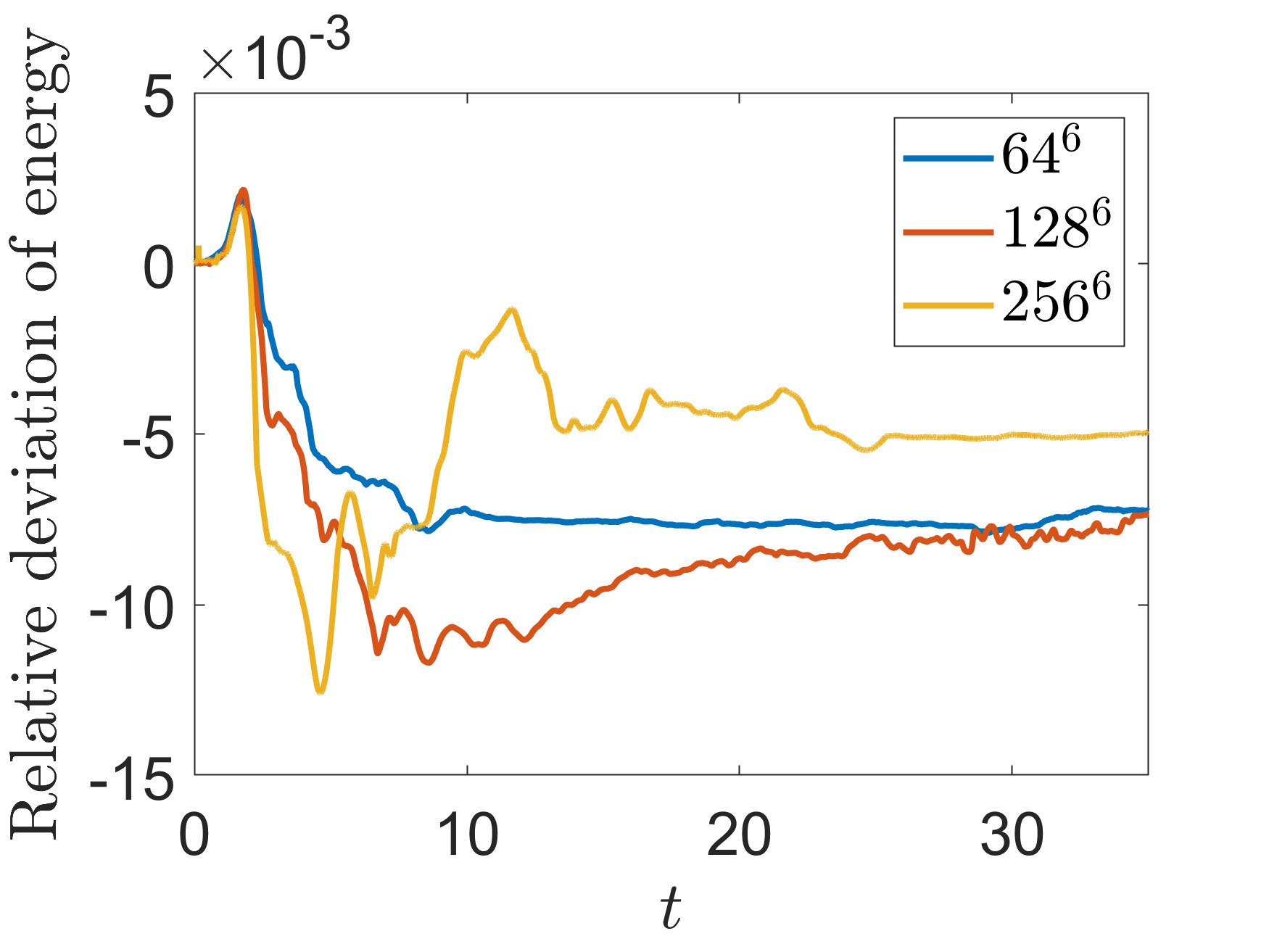}
    }
    \subfigure[]{
        \includegraphics[width=0.299\linewidth]{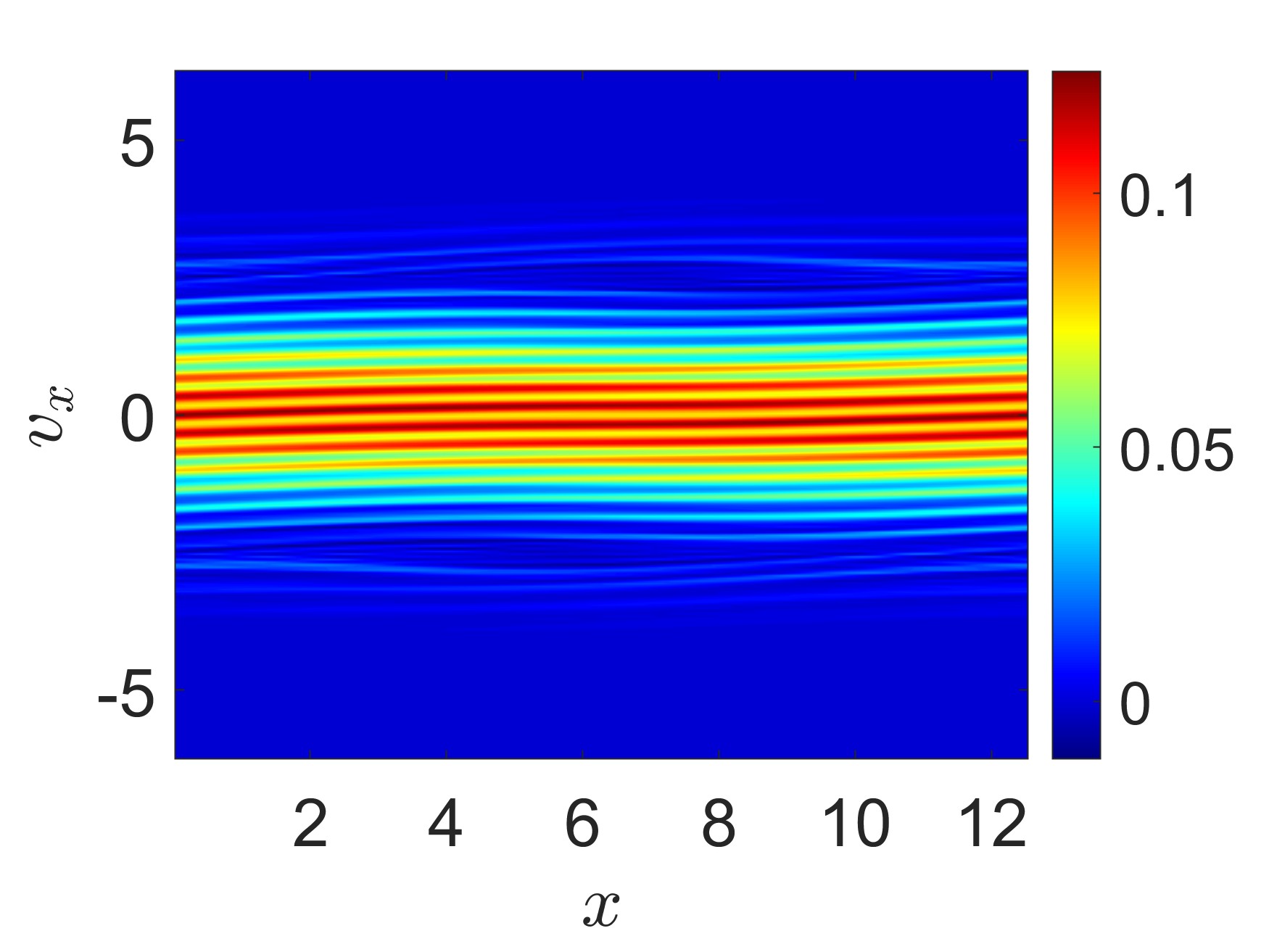}
    }
    \caption{(3D3V strong Landau damping). Selected results of the SLAR method: (a)–(e) Time histories of electric energy, compression ratio, relative deviation of mass, deviation of momentum in $v_x$, and relative deviation of energy, for resolutions $64^6$, $128^6$, and $256^6$. (f) Contour plot of $f(x,v_x)$ on the slice $(y,v_y,z,v_z)=(2\pi,0,2\pi,0)$ at $t=35$ for resolution $256^6$. Simulation settings: $v_{\text{max}}=2\pi$, CFL = 5, $\epsilon_{\text{Base}} = 5\times10^{-4}$, and no rank limitations.}
    \label{fig:3d3v_strong_histories}
\end{figure}

\end{example}

\begin{example}[Two-stream instability]
Consider the 3D3V VP system with the initial condition $f(\bm{x},\bm{v},0)$ being
\[
    f(\bm{x},\bm{v},0) = \frac{1}{2^{d_x}(2\pi)^{d_x/2}}
    \left(1 + \alpha \sum_{\mu=1}^{d_x} \cos\!\big(k x^{(\mu)}\big)\right) 
    \prod_{\mu=1}^{d_v}
    \Bigl[
        \exp\!\left(-\tfrac{(v^{(\mu)}-v_0)^2}{2}\right)
        + \exp\!\left(-\tfrac{(v^{(\mu)}+v_0)^2}{2}\right)
    \Bigr].
\]
where $\alpha = 0.001$, $v_0 = 2.4$, and $k = 0.2$.  
This describes two counter-propagating populations of warm electrons drifting with velocities $\pm v_0$ in each velocity coordinate. A small spatial perturbation is applied to initiate the instability.  

\Cref{fig:3d3v_two_stream_histories} presents the simulation results for the 3D3V two-stream instability problem. Panel~(a) shows the growth in electric energy, which exhibits the characteristic pattern of the two-stream instability: an initial decay and rebound for $t \lesssim 8$, followed by rapid growth and nonlinear saturation around $t \approx 15$, and then sustained large-amplitude oscillations. The results from all three resolutions ($64^6$, $128^6$, and $256^6$) are nearly indistinguishable, indicating that the bulk electric energy dynamics are not sensitive to spatial resolution in this case. 
Panel~(b) plots the compression ratio (lower is better), which remains relatively steady during the linear stage, but increases significantly once nonlinear structures develop due to rank growth. Higher resolutions maintain lower compression ratios throughout the simulation due to the curse of dimensionality, and all cases display step-wise increases corresponding to the addition of new rank components in the adaptive representation. 
Panels~(c)–(e) show the deviations in mass, momentum, and total energy. For $t \lesssim 25$, the relative mass and energy deviations stay within $10^{-6}$, but both grow rapidly to the order of $10^{-3}$ in the later nonlinear stage ($t > 30$). The momentum deviations in $v_x$ [panel~(d)] are negligible in the early stage but increase noticeably after $t \approx 20$.
Panel (f) provides a contour plot of the distribution slice $f(x,v_x)$ at $(y,v_y,z,v_z) = (0,0,0,0)$ at time $t = 35$. The plot displays the characteristic vortex structures of the two-stream instability during the nonlinear saturation phase, arising from electrons being trapped by the electric field. The separatrix, which delineates the boundary of this vortex, is also visible and marks the transition between regions of trapped and untrapped particles. These features demonstrate the method's capability to resolve fine-scale structures in the phase space while adaptively adjusting the rank distribution across the hierarchical dimensions.

\begin{figure}[htb]
    \centering
    \subfigure[]{
        \includegraphics[width=0.299\linewidth]{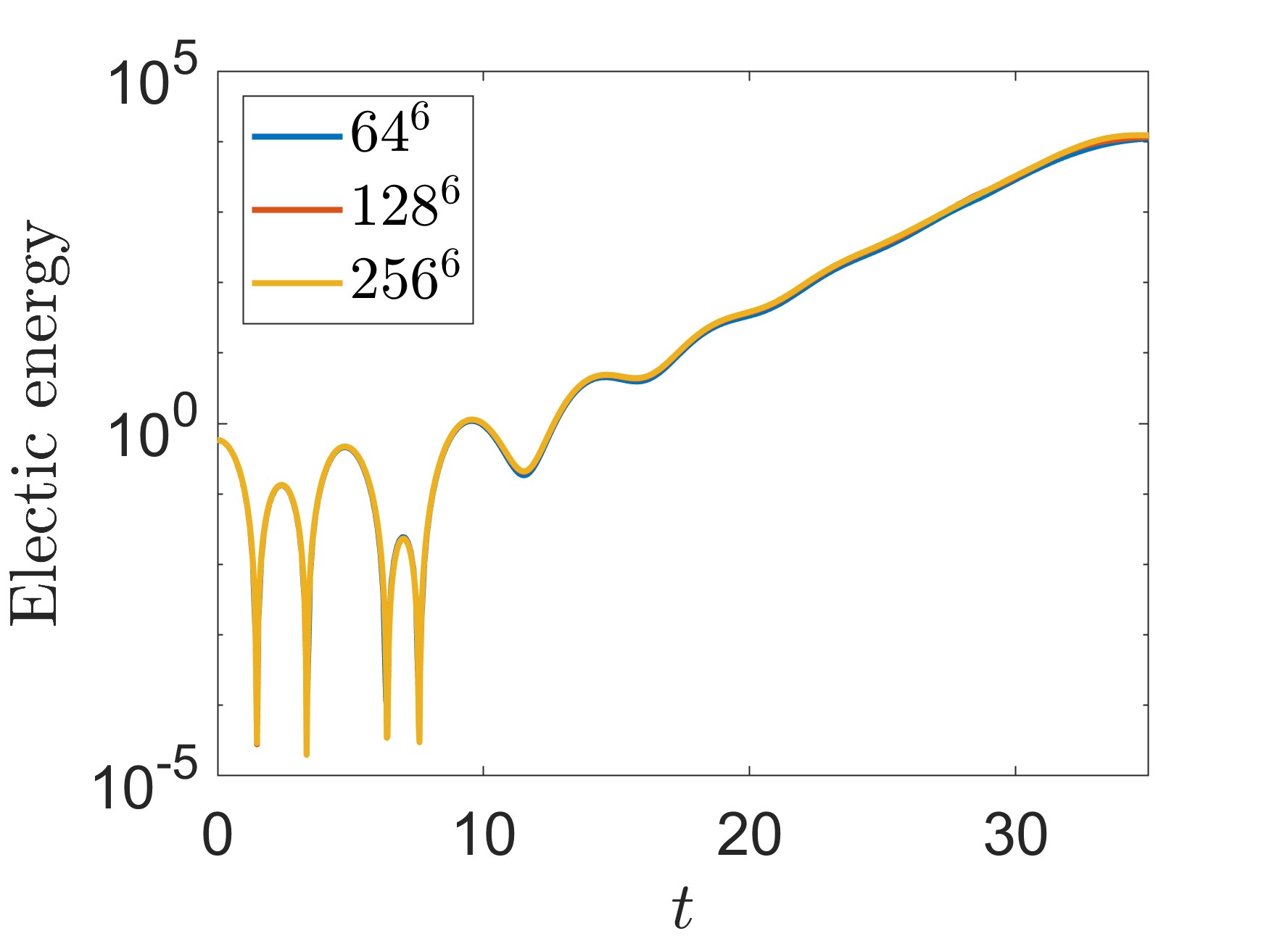}
    }
    \subfigure[]{
        \includegraphics[width=0.299\linewidth]{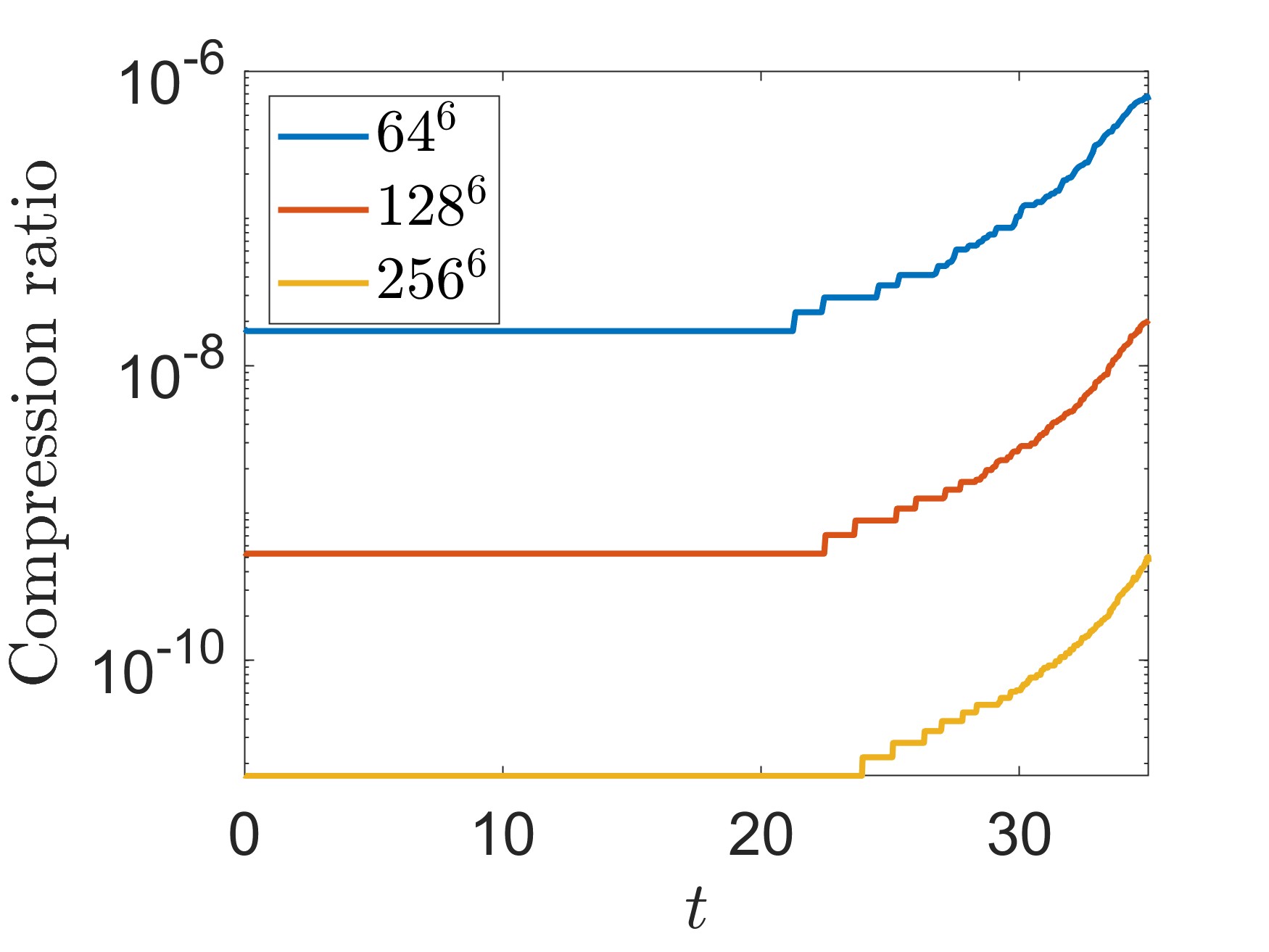}
    }
    \subfigure[]{
        \includegraphics[width=0.299\linewidth]{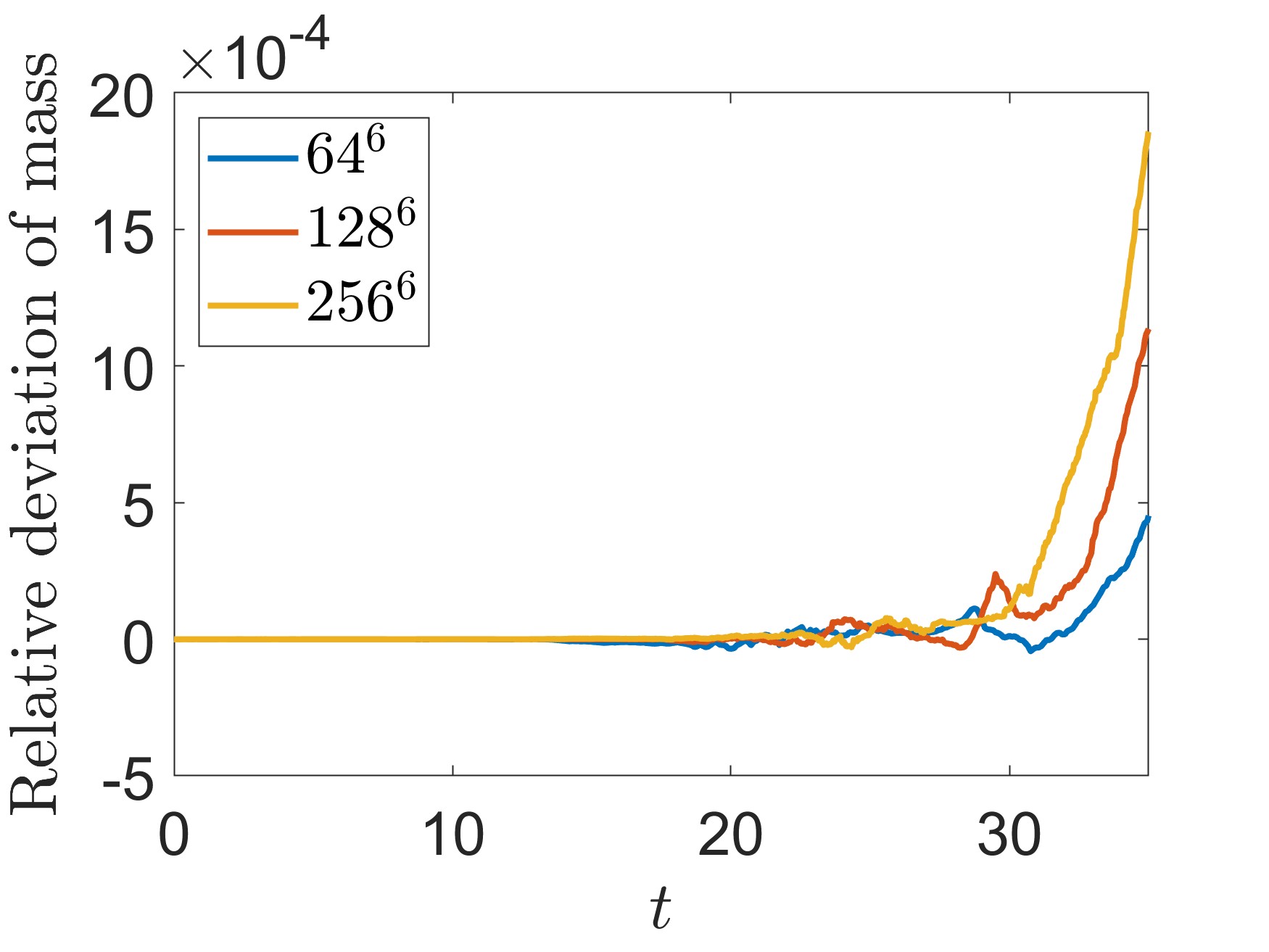}
    }
    
    \subfigure[]{
        \includegraphics[width=0.299\linewidth]{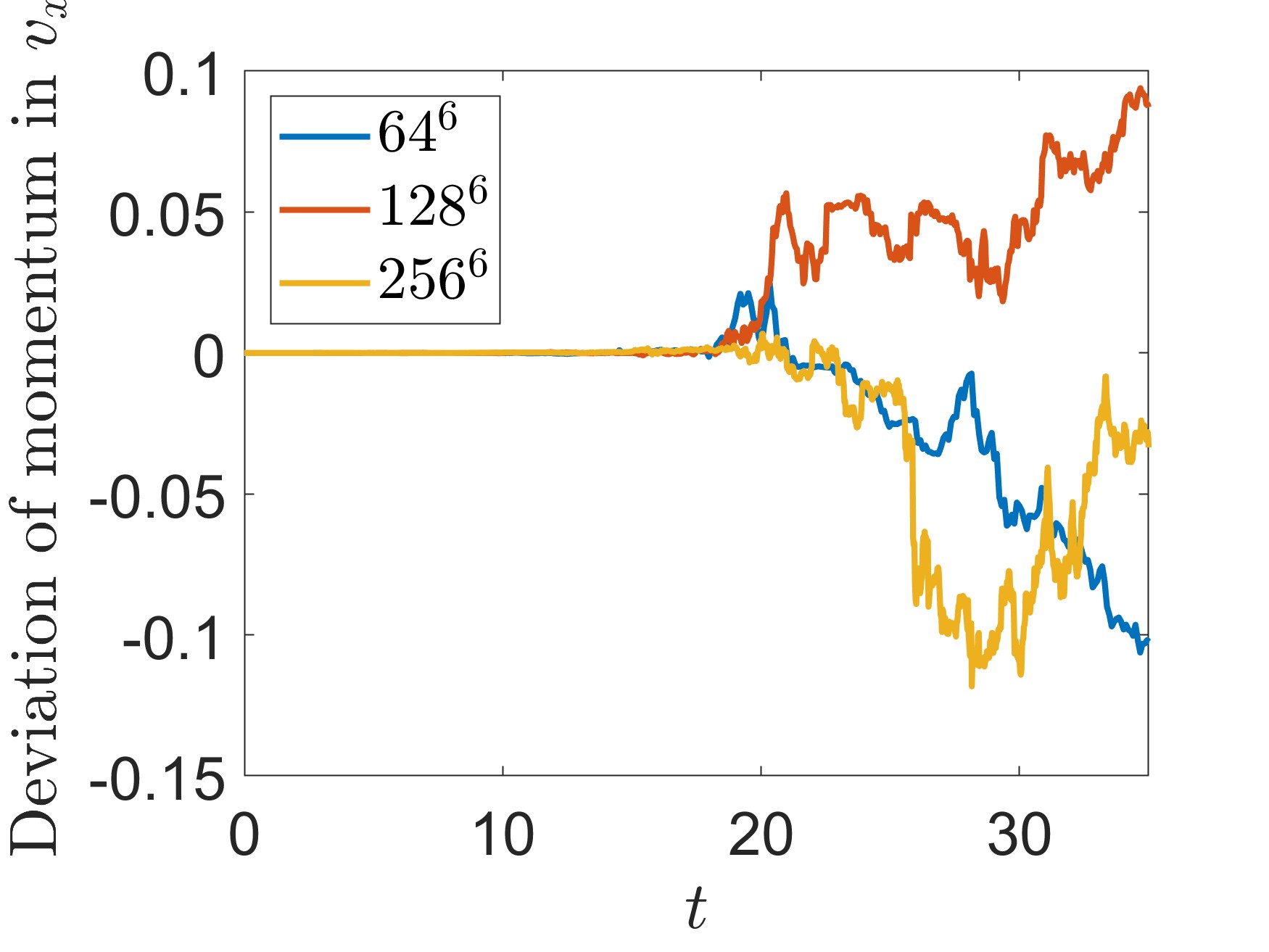}
    }
    \subfigure[]{
        \includegraphics[width=0.299\linewidth]{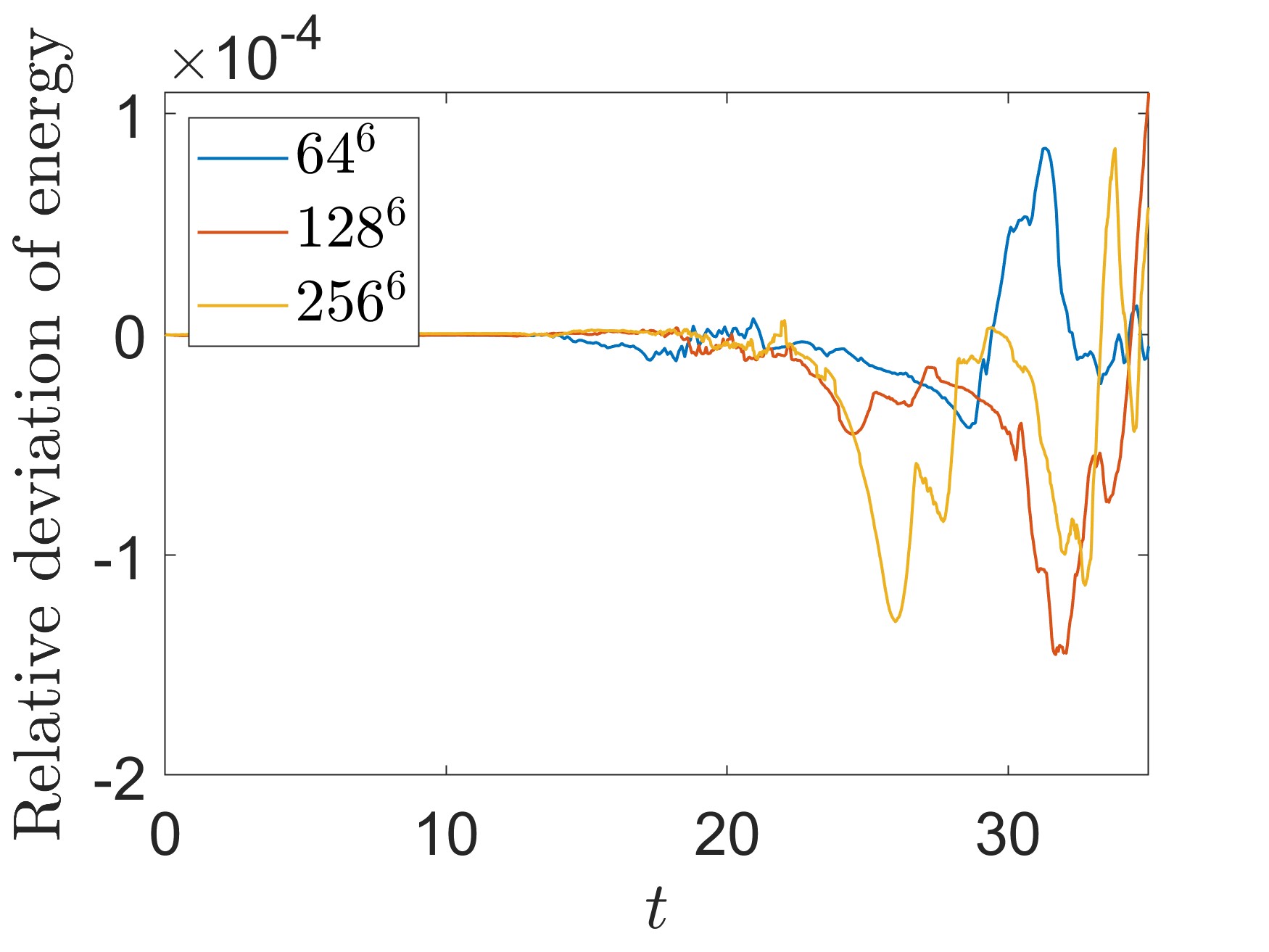}
    }
    \subfigure[]{
        \includegraphics[width=0.299\linewidth]{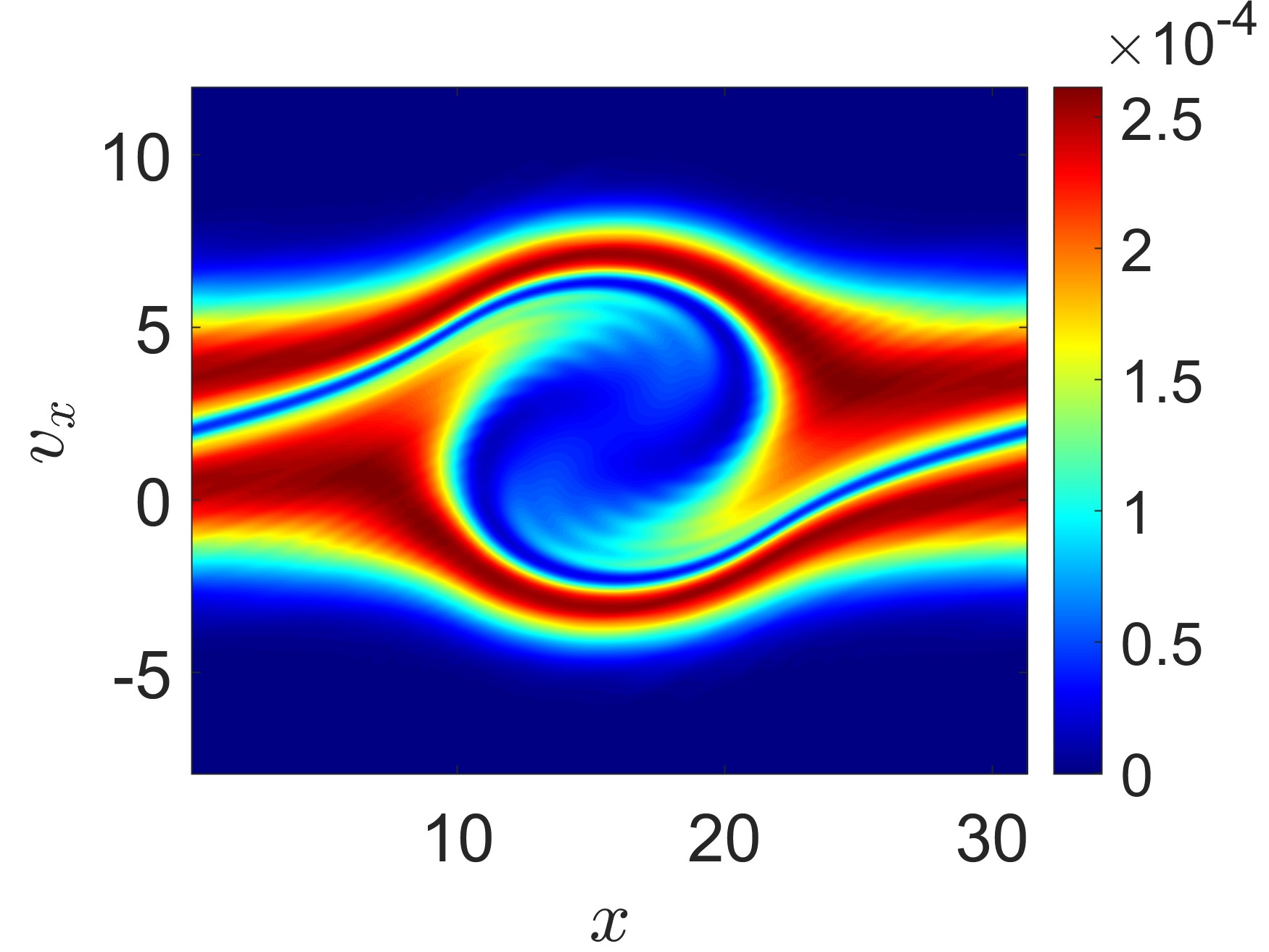}
    }
    \caption{(3D3V two-stream instability). Selected results of the SLAR method: (a)–(e) Time histories of electric energy, compression ratio, relative deviation of mass, deviation of momentum in $v_x$, and relative deviation of energy, for resolutions $64^6$, $128^6$, and $256^6$. (f) $f(x,v_x)$ on the slice $(y,v_y,z,v_z)=(0,0,0,0)$ at $t=35$ for resolution $256^6$.
    Simulation settings: $v_{\max}=8$, CFL = 5, $\varepsilon_{\text{Base}}=5\times10^{-4}$, with minimum leaf ranks fixed at 3 and unrestricted non-leaf ranks.}
    \label{fig:3d3v_two_stream_histories}
\end{figure}

\end{example}

\section{Conclusion}\label{sec:Conclusion}

We propose a high-dimensional SLAR method with the HT tensor approximation of high-dimensional Vlasov-Poisson solutions. This work extends our previous SLAR method from the matrix to a general high-order tensor setting, enabling efficient simulations with up to six-dimensional phase spaces. The proposed scheme retains the high-order spatial and temporal accuracy, and allows for large time steps without dimensional splitting errors. Its adaptive-rank mechanism effectively tracks the evolving low-rank structures in kinetic solutions, providing a scalable approach to high-dimensional problems. The proposed methodology features a general procedure for compact high-dimensional polynomial reconstruction, together with a highly effective recursive HTACA of high order tensors, achieving a computational complexity of $\mathcal{O}(d^4 N r^{3+\lceil\log_2 d\rceil})$, where $N$ is the grid resolution per dimension, $d$ is the problem dimension, and $r$ is the maximum rank in the tensor tree, overcoming the curse of dimensionality for a mesh-based method. Future work will focus on enhancing long-time conservation in strongly nonlinear regimes by extending the locally macroscopic conservation (LoMaC) framework~\cite{sands2025adaptive} to the 3D3V setting and applications of the current framework to more complex multi-scale kinetic systems. 

\section*{Acknowledgements}

The authors acknowledge support from the Air Force Office of Scientific Research under grant FA9550-24-1-0254 and the U.S. Department of Energy under grant DE-SC0023164. A.~J.~Christlieb also acknowledges support from the Office of Naval Research under grant N00014-24-1-2242.

\bibliographystyle{siam} 
\bibliography{Reference}

\begin{thebibliography}{10}

\bibitem{ballani2013black}
{\sc J.~Ballani, L.~Grasedyck, and M.~Kluge}, {\em Black box approximation of
  tensors in hierarchical {Tucker} format}, Linear algebra and its
  applications, 438 (2013), pp.~639--657.

\bibitem{boyd2001chebyshev}
{\sc J.~P. Boyd}, {\em Chebyshev and {Fourier} spectral methods}, Courier
  Corporation, 2001.

\bibitem{cai2021high}
{\sc X.~Cai, S.~Boscarino, and J.-M. Qiu}, {\em High order semi-{Lagrangian}
  discontinuous {Galerkin} method coupled with {Runge-Kutta} exponential
  integrators for nonlinear {Vlasov} dynamics}, Journal of Computational
  Physics, 427 (2021), p.~110036.

\bibitem{cho2024conservative}
{\sc S.-Y. Cho, M.~Groppi, J.-M. Qiu, G.~Russo, and S.-B. Yun}, {\em
  Conservative semi-{L}agrangian methods for kinetic equations}, in Active
  Particles, Volume 4: Theory, Models, Applications, Springer, 2024,
  pp.~283--420.

\bibitem{civril2007finding}
{\sc A.~Civril and M.~Magdon-Ismail}, {\em Finding maximum volume sub-matrices
  of a matrix}, RPI Comp Sci Dept TR,  (2007), pp.~07--08.

\bibitem{dektor2024interpolatory}
{\sc A.~Dektor and L.~Einkemmer}, {\em Interpolatory dynamical low-rank
  approximation for the 3+ 3d {Boltzmann-BGK} equation}, arXiv preprint
  arXiv:2411.15990,  (2024).

\bibitem{DimarcoReview2014}
{\sc G.~Dimarco and L.~Pareschi}, {\em Numerical methods for kinetic
  equations}, Acta Numerica, 23 (2014), pp.~369--520.

\bibitem{ding2020semi}
{\sc M.~Ding, X.~Cai, W.~Guo, and J.-M. Qiu}, {\em A {semi-Lagrangian}
  discontinuous {Galerkin (DG)}--local {DG} method for solving
  convection-diffusion equations}, Journal of Computational Physics, 409
  (2020), p.~109295.

\bibitem{einkemmer2021mass}
{\sc L.~Einkemmer and I.~Joseph}, {\em A mass, momentum, and energy
  conservative dynamical low-rank scheme for the {Vlasov} equation}, Journal of
  Computational Physics, 443 (2021), p.~110495.

\bibitem{einkemmer2025review}
{\sc L.~Einkemmer, K.~Kormann, J.~Kusch, R.~G. McClarren, and J.-M. Qiu}, {\em
  A review of low-rank methods for time-dependent kinetic simulations}, Journal
  of Computational Physics,  (2025), p.~114191.

\bibitem{einkemmer2018low}
{\sc L.~Einkemmer and C.~Lubich}, {\em A low-rank projector-splitting
  integrator for the {Vlasov--Poisson} equation}, SIAM Journal on Scientific
  Computing, 40 (2018), pp.~B1330--B1360.

\bibitem{espig2009black}
{\sc M.~Espig, L.~Grasedyck, and W.~Hackbusch}, {\em Black box low tensor-rank
  approximation using fiber-crosses}, Constructive approximation, 30 (2009),
  pp.~557--597.

\bibitem{grasedyck2010hierarchical}
{\sc L.~Grasedyck}, {\em {Hierarchical low rank approximation of tensors and
  multivariate functions}}, Lecture notes of the Z{\"u}rich summer school on
  Sparse Tensor Discretizations of High-Dimensional Problems,  (2010).

\bibitem{grasedyck2010hierarchical_SIMAA}
{\sc L.~{Grasedyck}}, {\em Hierarchical singular value decomposition of
  tensors}, SIAM journal on matrix analysis and applications, 31 (2010),
  pp.~2029--2054.

\bibitem{guo2022low}
{\sc W.~Guo and J.-M. Qiu}, {\em A low rank tensor representation of linear
  transport and nonlinear {Vlasov} solutions and their associated flow maps},
  Journal of Computational Physics, 458 (2022), p.~111089.

\bibitem{guo2024local}
\leavevmode\vrule height 2pt depth -1.6pt width 23pt, {\em A local macroscopic
  conservative {(LoMaC)} low rank tensor method for the {Vlasov} dynamics},
  Journal of Scientific Computing, 101 (2024), p.~61.

\bibitem{hackbusch2009new}
{\sc W.~Hackbusch and S.~K{\"u}hn}, {\em A new scheme for the tensor
  representation}, Journal of Fourier analysis and applications, 15 (2009),
  pp.~706--722.

\bibitem{kormann2015semi}
{\sc K.~Kormann}, {\em A {semi-Lagrangian} {Vlasov} solver in tensor train
  format}, SIAM Journal on Scientific Computing, 37 (2015), pp.~B613--B632.

\bibitem{kressner2012htucker}
{\sc D.~Kressner and C.~Tobler}, {\em {htucker — {A} {MATLAB} toolbox for
  tensors in hierarchical {T}ucker format}}, Mathicse, EPF Lausanne,  (2012),
  p.~11.

\bibitem{li2023high}
{\sc L.~Li, J.~Qiu, and G.~Russo}, {\em A high-order {semi-Lagrangian} finite
  difference method for nonlinear {Vlasov} and {BGK} models}, Communications on
  Applied Mathematics and Computation, 5 (2023), pp.~170--198.

\bibitem{qiu2010conservative}
{\sc J.-M. Qiu and A.~Christlieb}, {\em A conservative high order
  {semi-Lagrangian} {WENO} method for the {Vlasov} equation}, Journal of
  Computational Physics, 229 (2010), pp.~1130--1149.

\bibitem{qiu2011conservative}
{\sc J.-M. Qiu and C.-W. Shu}, {\em Conservative high order {semi-Lagrangian}
  finite difference {WENO} methods for advection in incompressible flow},
  Journal of Computational Physics, 230 (2011), pp.~863--889.

\bibitem{rossmanith2011positivity}
{\sc J.~A. Rossmanith and D.~C. Seal}, {\em A positivity-preserving high-order
  {semi-Lagrangian discontinuous Galerkin} scheme for the {Vlasov--Poisson}
  equations}, Journal of Computational Physics, 230 (2011), pp.~6203--6232.

\bibitem{sands2025transport}
{\sc W.~A. Sands, W.~Guo, J.-M. Qiu, and T.~Xiong}, {\em High-order adaptive
  rank integrators for multi-scale linear kinetic transport equations in the
  hierarchical {Tucker} format}, SIAM Journal on Scientific Computing (to
  appear),  (2025).

\bibitem{sands2025adaptive}
{\sc W.~A. Sands, J.-M. Qiu, D.~Hayes, and N.~Zheng}, {\em An adaptive-rank
  approach with greedy sampling for multi-scale {BGK} equations}, arXiv
  preprint arXiv:2505.17191,  (2025).

\bibitem{shi2024distributed}
{\sc T.~Shi, D.~Hayes, and J.-M. Qiu}, {\em Distributed memory parallel
  adaptive tensor-train cross approximation}, arXiv preprint arXiv:2407.11290,
  (2024).

\bibitem{sonnendrucker1999semi}
{\sc E.~Sonnendr{\"u}cker, J.~Roche, P.~Bertrand, and A.~Ghizzo}, {\em The
  {semi-Lagrangian} method for the numerical resolution of the {Vlasov}
  equation}, Journal of computational physics, 149 (1999), pp.~201--220.

\bibitem{zheng2025semi}
{\sc N.~Zheng, D.~Hayes, A.~Christlieb, and J.-M. Qiu}, {\em {A
  Semi-{L}agrangian adaptive-rank ({SLAR}) method for linear advection and
  nonlinear Vlasov-Poisson system}}, Journal of Computational Physics,  (2025),
  p.~113970.

\end{thebibliography}

\end{document}